\DeclareMathOperator{\SheafHom}{\mathcal{H\kern -3pt o\kern -2pt m\kern -1pt}}
\numberwithin{equation}{section}
\theoremstyle{definition}
\newtheorem{theorem}{Theorem}[section]
\newtheorem{lemma}[theorem]{Lemma}
\newtheorem{proposition}[theorem]{Proposition}
\newtheorem{definition}[theorem]{Definition}
\newtheorem{example}[theorem]{Example}
\newtheorem{corollary}[theorem]{Corollary}
\theoremstyle{remark}
\newtheorem*{remark}{Remark}
\title{\textbf{Special Points Arising From Faithful Metacyclic and Dicyclic Galois Covers of the Projective Line}}
\author{Brian Yang}
\affil{California Institute of Technology}
\date{\today}
\begin{document}

\maketitle

\begin{abstract}
Within the Schottky problem, the study of special subvarieties of the Torelli locus has long been of great interest. We describe a representation-theoretic criterion for a Jacobian variety arising from a $G$-Galois cover of $\mathbb{P}^1$ branched at $3$ points to have complex multiplication (CM). For $G$ faithful metacyclic or dicyclic, we classify all such covers with Galois group $G$, identifying those that have CM. We compute the CM-field and type of Jacobian varieties arising from these covers, applying the representation theory of $G$ over $\mathbb{Q}$ and $\mathbb{Q}(\zeta_4)$. In particular, symplectic irreducible representations of $G$ are afforded by the Jacobian variety in the dicyclic case, giving rise to new examples of CM abelian varieties.
\end{abstract}

\section{Introduction}
\label{Introduction}

Let $\mathsf{M}_g$ be the moduli space of smooth complex algebraic curves of genus $g$, $\mathsf{A}_g$ be the moduli space of principally polarized abelian varieties of dimension $g$ over $\mathbb{C}$, and $j : \mathsf{M}_g \rightarrow \mathsf{A}_g$ be the Torelli morphism, taking $[C] \mapsto [\text{Jac}(C)]$ (isomorphism classes of curves to isogeny classes of Jacobians). Define $\mathsf{T}_g^{\circ} := j(\mathsf{M}_g)$ the open Torelli locus and its Zariski closure $\mathsf{T}_g := \overline{\mathsf{T}_g^{\circ}}$ the Torelli locus. Thus, the study of the open Torelli locus in Siegel varieties $\mathsf{A}_g$ is equivalent to the study of abelian varieties that arise as Jacobians of algebraic curves, a problem of great interest in algebraic and arithmetic geometry originally posed by Schottky.

We begin with the following exposition, following Freidani et. al~\cite{frediani2014shimura}. For $r \ge 3$, an $(r - 3)$-dimensional family of Galois covers of $\mathbb{P}^1$ is determined by a finite group $G$, a number of branching points $r \ge 3$, and a spherical system of generators (SSG) $\bm{x} = (x_1, \dots, x_r)$ of $G$ (cf. Section~\ref{Background}). Thus, the pair $(G, \bm{x})$ is called a \textit{monodromy datum} and describes the corresponding family of $G$-Galois covers. Each pair $(G, \bm{x})$ describes an $(r - 3)$-dimensional irreducible algebraic subvariety $\mathsf{M}(G, \bm{x}) \subseteq \mathsf{M}_g$, and hence via the Torelli morphism an $(r - 3)$-dimensional subvariety $\mathsf{Z}(G, \bm{x}) \subseteq \mathsf{A}_g$. It is natural to classify those data $(G, \bm{x})$ in which $\mathsf{Z}(G, \bm{x})$ is a \textit{special subvariety} of $\mathsf{A}_g$, as these special subvarieties contain a \textit{dense set} of CM points. Frediani et al. provide a sufficient criterion for classifying special subvarieties by considering the representation $\mathfrak{S}^2 H^0(C, \omega_C)$ of $G$~\cite[Theorem 3.9]{frediani2014shimura}: if
\begin{equation}
\label{eq:1.1}
    N := \dim (\mathfrak{S}^2 H^0(C, \omega_C))^G
\end{equation}
is $r - 3$, then $\mathsf{Z}(G, \bm{x})$ is special. We are concerned only with the application of~\eqref{eq:1.1} to the case of $r = 3$ branching points (i.e., trivial families of Galois covers). In the sequel, we always limit ourselves to this case. Then, $N = 0$ is a sufficient criterion for the $\mathsf{Z}(G, \bm{x})$ to be a \textit{special point}, hence a sufficient criterion for the Jacobian corresponding to $\mathsf{Z}(G, \bm{x})$ to admit CM.

To this end, the project aims to classify $G$-Galois covers $C \rightarrow \mathbb{P}^1$ with $3$ branching points, identifying those that are special with $N = 0$ . It is well-known that all cyclic and abelian covers of $\mathbb{P}^1$ with $3$ branching points give rise to CM Jacobian varieties, e.g. see~\cite[Lemma 3.1]{li2018newton}. Building off of the previous work in~\cite{li2018newton}, we consider the situation where the Galois group $G$ is one of the following families of non-abelian metacyclic groups:
\begin{enumerate}[nolistsep, label={(\roman{enumi})}]
    \item (Faithful metacyclic groups) $G \simeq (\mathbb{Z}/q\mathbb{Z}) \rtimes (\mathbb{Z}/n\mathbb{Z})$, $q$ prime, and the action $\mathbb{Z}/n\mathbb{Z} \rightarrow \text{Aut}(\mathbb{Z}/q\mathbb{Z})$ is faithful.
    \item (Dicyclic groups) $G \simeq (\mathbb{Z}/q\mathbb{Z}) \rtimes_2 (\mathbb{Z}/4\mathbb{Z})$, $q$ odd prime, and the action $\mathbb{Z}/4\mathbb{Z} \rightarrow \text{Aut}(\mathbb{Z}/q\mathbb{Z})$ has kernel isomorphic to $\mathbb{Z}/2\mathbb{Z}$.
\end{enumerate}
The CM of Jacobians arising from faithful metacyclic covers was previously investigated by Carocca et al.~\cite[Section 3]{Carocca2011}, whose results we revisit, refine, and extend using our perspective.

We wish to determine which monodromy data satisfy $N = 0$ (and thus are special), applying the representation theory of $G$. To give a more explicit presentation of the criterion of~\eqref{eq:1.1}, we make use of the \textit{Frobenius-Schur indicator} (cf.~\eqref{eq:frobschur}), which is used to classify the irreducible representations of $G$ over the real numbers. In particular, recall that a complex irreducible character $\chi \in \text{Irr}(G)$ is called \textit{orthogonal}, \textit{complex-valued}, or \textit{symplectic} if its Frobenius-Schur indicator $\iota_{\chi}$ is $1, 0$, or $-1$, respectively~\cite[Theorem 13.1]{Huppert1998}. Then, we obtain the following consequence of~\cite[Section 2]{frediani2014shimura}:
\begin{proposition}[See Proposition~\ref{prop:Nformula}, Corollary~\ref{corr:3.4}]
\label{prop:1.1}
Let $C \rightarrow \mathbb{P}^1$ be a Galois cover branched at $3$ points with datum $(G, \bm{x})$. Given any character $\chi \in \text{Irr}(G)$, let $\chi^{\ast} = \overline{\chi}$ be its dual character. Suppose $\mu_{\chi}$ is the multiplicity of $\chi$ in $H^0(C, \omega_C)$. Then,
\begin{equation}
    N = \frac12 \sum_{\chi \in \text{Irr(G)}} \mu_{\chi}(\mu_{\chi^{\ast}} + \iota_{\chi}).
\end{equation}
In particular, $N = 0$ (and so $\mathsf{Z}(G, \bm{x})$ is special) if and only if all of the following conditions are satisfied:
\begin{enumerate}[nolistsep, label={(\roman{enumi})}]
    \item For all orthogonal characters $\chi \in \text{Irr}(G)$, we have $\mu_{\chi} = 0$.
    \item For all complex-valued characters $\chi \in \text{Irr}(G)$, we either have $\mu_{\chi} = 0$ or $\mu_{\chi^{\ast}} = 0$.
    \item For all symplectic characters $\chi \in \text{Irr}(G)$, we have $\mu_{\chi} \in \{0, 1\}$.
\end{enumerate}
\end{proposition}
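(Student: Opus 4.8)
The plan is to split the statement into its two halves --- the closed formula for $N$, and the equivalence $N = 0 \iff$ (i)--(iii) --- and to derive both by elementary $\mathbb{C}$-character theory applied to the $\mathbb{C}[G]$-module $V := H^0(C, \omega_C)$, written in isotypic form $V \cong \bigoplus_{\chi \in \mathrm{Irr}(G)} \chi^{\oplus\mu_\chi}$. The only external input is the set-up itself, namely that $N = \dim(\mathfrak{S}^2 V)^G$ with the $G$-action induced on the symmetric square, as in~\cite[Section 2]{frediani2014shimura} specialised to $r = 3$; everything after that is a character computation.

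For the formula I would start from the standard symmetric-square character identity $\chi_{\mathfrak{S}^2 V}(g) = \tfrac12\bigl(\chi_V(g)^2 + \chi_V(g^2)\bigr)$, so that
\begin{equation*}
N \;=\; \langle \chi_{\mathfrak{S}^2 V},\, 1\rangle \;=\; \frac{1}{2|G|}\sum_{g \in G}\chi_V(g)^2 \;+\; \frac{1}{2|G|}\sum_{g \in G}\chi_V(g^2).
\end{equation*}
The first term is $\tfrac12\langle \chi_V, \overline{\chi_V}\rangle$, which by orthogonality of irreducible characters together with $\chi^\ast = \overline{\chi}$ equals $\tfrac12\sum_\chi \mu_\chi\mu_{\chi^\ast}$; the second term is $\tfrac12\sum_\chi \mu_\chi\iota_\chi$ directly from the definition $\iota_\chi = \tfrac1{|G|}\sum_g \chi(g^2)$ of the Frobenius--Schur indicator. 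Adding the two gives $N = \tfrac12\sum_{\chi\in\mathrm{Irr}(G)}\mu_\chi(\mu_{\chi^\ast} + \iota_\chi)$, which is the asserted formula.

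For the equivalence I would work with $2N = \sum_{\chi\in\mathrm{Irr}(G)}\mu_\chi(\mu_{\chi^\ast} + \iota_\chi)$ and group the summands according to the orbits of the duality $\chi \mapsto \chi^\ast$ on $\mathrm{Irr}(G)$, using the Frobenius--Schur trichotomy~\cite[Theorem 13.1]{Huppert1998}: a character is self-dual exactly when it is orthogonal ($\iota_\chi = 1$) or symplectic ($\iota_\chi = -1$), and non-self-dual exactly when it is complex-valued ($\iota_\chi = 0$). For an orthogonal $\chi$ the summand is $\mu_\chi(\mu_\chi + 1) \ge 0$, which vanishes iff $\mu_\chi = 0$; for a symplectic $\chi$ it is $\mu_\chi(\mu_\chi - 1) \ge 0$, which vanishes iff $\mu_\chi \in \{0,1\}$; and for a non-self-dual pair $\{\chi, \chi^\ast\}$ the two summands add to $2\mu_\chi\mu_{\chi^\ast} \ge 0$, which vanishes iff $\mu_\chi = 0$ or $\mu_{\chi^\ast} = 0$. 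Hence $2N$ is a sum of nonnegative integers, so $N = 0$ if and only if each of these contributions is zero, which is precisely the conjunction of conditions (i), (ii) and (iii).

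I do not anticipate a genuine obstacle: the whole argument is bookkeeping with characters. The two points that need care are (a) using the symmetric-square identity rather than the exterior-square one (the latter would produce $\mu_\chi(\mu_{\chi^\ast} - \iota_\chi)$-type summands, flipping the roles of orthogonal and symplectic), and (b) organising the case analysis so that each dual pair $\{\chi, \chi^\ast\}$ is counted exactly once; making the trichotomy (self-dual $\Leftrightarrow$ orthogonal or symplectic, non-self-dual $\Leftrightarrow$ complex-valued) explicit is what ensures the three cases are exhaustive and mutually exclusive.
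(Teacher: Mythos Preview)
Your proposal is correct and follows essentially the same approach as the paper: the formula is derived from the symmetric-square character identity by splitting into the $\tfrac12\langle\chi_V,\overline{\chi_V}\rangle$ and $\tfrac12\sum_\chi\mu_\chi\iota_\chi$ pieces exactly as in the paper's proof of Proposition~\ref{prop:Nformula}. Your treatment of the equivalence (grouping by duality orbits and checking each nonnegative contribution separately) in fact supplies the case analysis that the paper leaves implicit in stating Corollary~\ref{corr:3.4}.
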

We also develop methods to compute the CM-field of Jacobian varieties associated to special monodromy data with $N = 0$. To do so, we utilize the \textit{isotypic decomposition} of complex abelian varieties with $G$-action to describe the endomorphisms of the Jacobian varieties arising from $G$-Galois covers~\cite[Theorem 2.2]{Lange2004}. We first settle the situation where $H^0(C, \omega_C)$ contains complex-valued irreducible characters $\chi$, i.e. $\iota_{\chi} = 0$. Here, the Proposition~\ref{prop:jacobiancm} generalizes the results~\cite[Lemma 3.1]{li2018newton} and~\cite[Theorem 1]{Carocca2011}. Next, we consider the situation where $H^0(C, \omega_C)$ contains \textit{symplectic irreducible characters} $\chi$, i.e. $\iota_{\chi} = -1$ of positive multiplicity. These examples were not discussed in previous works, and are trickier to handle than the complex-valued characters. The main idea is to compare the endomorphism algebra of the Jacobian to $K[G]$, where $K \supset \mathbb{Q}$ is a suitable imaginary quadratic extension. See Theorem~\ref{prop:jacobiancmsymplectic} for a classification of the CM-field in this case.

For the first family of groups $G \simeq (\mathbb{Z}/q\mathbb{Z}) \rtimes (\mathbb{Z}/n\mathbb{Z})$, the SSG $\bm{x}$ has orders, or local monodromy, of either $\bm{m} = (q, n, n)$ or $\bm{m} = (n_1, n_2, n_3)$ where each $n_i$ divides $n$. Here, we are mainly interested in the case  of $\bm{m} = (q, n, n)$ (the second case is not interesting), which was first studied by~\cite[Section 3]{Carocca2011}. In Section~\ref{sec:4}, we prove the main classification result:
\begin{theorem}[See Proposition~\ref{prop:4.2}, Theorem~\ref{thm:4.6}]
\label{thm:1.1}
Suppose $G \simeq (\mathbb{Z}/q\mathbb{Z}) \rtimes (\mathbb{Z}/n\mathbb{Z})$ where $q$ is a prime. A $G$-Galois cover $C \rightarrow \mathbb{P}^1$ with local monodromy $\bm{m} = (q, n, n)$ is special with $N = 0$ if and only if $n = 2, 3$. In case $n = 3$, the Jacobian $\text{Jac}(C)$ has complex multiplication by $\mathbb{Q}(\zeta_q)$.
\end{theorem}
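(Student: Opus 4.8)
The plan is to compute, for each $\chi\in\text{Irr}(G)$, the multiplicity $\mu_\chi$ of $\chi$ in $H^0(C,\omega_C)$ together with its Frobenius–Schur indicator $\iota_\chi$, and then read off the classification from Proposition~\ref{prop:1.1}. Write $G=(\mathbb{Z}/q\mathbb{Z})\rtimes\langle\tau\rangle$ with $\langle\tau\rangle\cong\mathbb{Z}/n\mathbb{Z}$; faithfulness forces $n\mid q-1$, and we let $H\le(\mathbb{Z}/q\mathbb{Z})^\times$ be the order-$n$ image of $\langle\tau\rangle$ in $\text{Aut}(\mathbb{Z}/q\mathbb{Z})$. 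The irreducible characters of $G$ are the $n$ linear characters $\psi^j$ inflated from $G/(\mathbb{Z}/q\mathbb{Z})$ and the $(q-1)/n$ characters $\rho_s=\text{Ind}_{\mathbb{Z}/q\mathbb{Z}}^G\theta^s$ of degree $n$, for a fixed nontrivial $\theta\in\widehat{\mathbb{Z}/q\mathbb{Z}}$, with $\rho_s\cong\rho_{s'}$ iff $sH=s'H$ and $\rho_s^\ast=\rho_{-s}$. A first reduction: the subcover $C/(\mathbb{Z}/q\mathbb{Z})\to\mathbb{P}^1$ is a $\mathbb{Z}/n\mathbb{Z}$-cover ramified only over the two points $P_2,P_3$ below the inertia generators of order $n$ (the inertia at $P_1$ lies inside $\mathbb{Z}/q\mathbb{Z}$), hence it has genus $0$, so $\mu_{\psi^j}=0$ for all $j$; thus conditions (i)–(iii) of Proposition~\ref{prop:1.1} need only be checked on the $\rho_s$.

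Next I would pin down the $\mu_{\rho_s}$ by the Chevalley–Weil formula. Since $\rho_s|_{\mathbb{Z}/q\mathbb{Z}}=\bigoplus_{h\in H}\theta^{sh}$, a generator of the inertia at $P_1$ has eigenvalues $\{\zeta_q^{sh}\}_{h\in H}$ on $\rho_s$; and since each order-$n$ inertia group at $P_2,P_3$ is a complement to $\mathbb{Z}/q\mathbb{Z}$ in $G$, $\rho_s$ restricts there to the regular representation of $\mathbb{Z}/n\mathbb{Z}$, contributing $\sum_{\nu=0}^{n-1}\tfrac{\nu}{n}=\tfrac{n-1}{2}$ apiece. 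This yields $\mu_{\rho_s}=\tfrac1q\,\sigma(sH)-1$, where $\sigma(sH)$ is the sum of the least positive residues modulo $q$ of the elements of the coset $sH$. Here $\sigma(sH)$ is a positive multiple of $q$ lying in $[q,(n-1)q]$, so $\mu_{\rho_s}\ge 0$; and because a residue and its negative sum to $q$, one gets the key identity $\mu_{\rho_s}+\mu_{\rho_{-s}}=n-2$.

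For the ``if'' direction: when $n=2$, $G$ is dihedral, each $\rho_s$ is self-dual, so $\mu_{\rho_s}=(n-2)/2=0$ and $H^0(C,\omega_C)=0$; then $C\cong\mathbb{P}^1$ and $N=0$ holds trivially. When $n=3$ the $\rho_s$ are complex-valued (as $-1\notin H$), so the only orthogonal character is the trivial one (multiplicity $0$) and there are no symplectic characters — giving (i) and (iii) — while for (ii) each dual pair $\{\rho_s,\rho_{-s}\}$ satisfies $\mu_{\rho_s}+\mu_{\rho_{-s}}=1$, so one of the two multiplicities vanishes; hence $N=0$. For the CM statement with $n=3$: by the above, $H^0(C,\omega_C)=\bigoplus_{s\in S}\rho_s$ with each multiplicity one, where $S$ selects one coset from each dual pair, so $\bigcup_{s\in S}sH$ is a half-system modulo $q$ (one of each pair $\{t,-t\}$). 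Now $\mathbb{Z}/q\mathbb{Z}<G$ acts on $\text{Jac}(C)$, and since $\text{Jac}(C)^{\mathbb{Z}/q\mathbb{Z}}=\text{Jac}(C/(\mathbb{Z}/q\mathbb{Z}))=0$ the trivial factor of $\mathbb{Q}[\mathbb{Z}/q\mathbb{Z}]=\mathbb{Q}\times\mathbb{Q}(\zeta_q)$ acts as zero, embedding $\mathbb{Q}(\zeta_q)\hookrightarrow\text{End}^0(\text{Jac}(C))$; since $\dim\text{Jac}(C)=(q-1)/2=\tfrac12[\mathbb{Q}(\zeta_q):\mathbb{Q}]$, this is a CM structure whose type is the half-system just described — formally one invokes Proposition~\ref{prop:jacobiancm}.

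The crux is the ``only if'' direction: $n\ge 4\Rightarrow N>0$. Assume $N=0$. If $n$ is even then $-1\in H$, so every $\rho_s$ is self-dual; a short computation with the Frobenius–Schur formula for induced characters — the only $y\in G\smallsetminus\mathbb{Z}/q\mathbb{Z}$ with $y^2\in\mathbb{Z}/q\mathbb{Z}$ are the $y=a\tau^{n/2}$ ($a\in\mathbb{Z}/q\mathbb{Z}$), and these satisfy $y^2=1$ because $\tau^{n/2}$ acts on $\mathbb{Z}/q\mathbb{Z}$ by $-1$ — shows $\iota_{\rho_s}=+1$, i.e. $\rho_s$ is orthogonal (this is exactly where the faithful case diverges from the dicyclic one, where the analogous characters are symplectic). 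Then condition (i) forces $\mu_{\rho_s}=0$ for every $s$, i.e. $\sigma(sH)=q$ for all $(q-1)/n$ cosets; summing over cosets gives $\tfrac{(q-1)q}{n}=\sum_{a=1}^{q-1}a=\tfrac{q(q-1)}{2}$, so $n=2$, a contradiction. If $n$ is odd, the $\rho_s$ are complex-valued, and condition (ii) together with $\mu_{\rho_s}+\mu_{\rho_{-s}}=n-2$ forces $\{\mu_{\rho_s},\mu_{\rho_{-s}}\}=\{0,n-2\}$ for every dual pair, i.e. $\sigma(sH)\in\{q,(n-1)q\}$; so exactly $k:=(q-1)/2n$ of the cosets are ``small'' ($\sigma=q$). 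The $kn=(q-1)/2$ distinct residues lying in small cosets then sum to $kq=\tfrac{q(q-1)}{2n}$, but any $(q-1)/2$ distinct elements of $\{1,\dots,q-1\}$ sum to at least $1+2+\cdots+\tfrac{q-1}{2}=\tfrac{(q-1)(q+1)}{8}$, forcing $4q\ge n(q+1)$ — impossible for $n\ge 4$. Hence $N>0$ whenever $n\ge 4$, completing the classification. The main obstacles I anticipate are (a) this last counting step: no individual coset need be ``bad'', so one must argue globally via the residue-sum inequality rather than by producing a witness; and (b) computing $\iota_{\rho_s}$ correctly for even $n$ — were $\rho_s$ symplectic, $n=4$ would survive condition (iii), so this sign genuinely governs the answer.
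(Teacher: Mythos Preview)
Your argument is correct and follows the same overall architecture as the paper: compute the multiplicities $\mu_{\rho_s}$ via Chevalley--Weil, establish the key identity $\mu_{\rho_s}+\mu_{\rho_{-s}}=n-2$, and for odd $n>3$ rule out $N=0$ by exactly the residue-sum inequality you give (this is the paper's Lemma~4.3(iii)). Your CM argument via the $\mathbb{Z}/q\mathbb{Z}$-action and the vanishing of $\text{Jac}(C/(\mathbb{Z}/q\mathbb{Z}))$ is a valid shortcut to what the paper obtains from Proposition~\ref{prop:jacobiancm}.

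The one place you diverge in substance is the even-$n$ case. The paper observes directly that $-1\in H$ pairs each residue with its complement, so $\sigma(sH)=\tfrac{nq}{2}$ and hence $\mu_{\rho_s}=\tfrac{n}{2}-1$ for every $s$; it then invokes the blanket fact (stated earlier in Section~\ref{sec:faithfulmetacyclic}) that all Schur indices of $G_{q,n}$ are $1$, which forces the real-valued $\rho_s$ to be orthogonal rather than symplectic, so $\mu_{\rho_s}\ge 1$ already violates Corollary~\ref{corr:3.4}(i). You instead compute $\iota_{\rho_s}=+1$ by hand and then run a global sum $\sum_s\sigma(sH)=\tfrac{q(q-1)}{2}$ to force $n=2$. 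Both routes are fine; yours has the virtue of making explicit the point you flag in (b), namely that faithfulness is exactly what makes $\tau^{n/2}$ act by $-1$ and hence gives $\iota_{\rho_s}=+1$ rather than $-1$---whereas the paper's route is shorter once the Schur-index fact is on the table. One cosmetic remark: your formula $\mu_{\rho_s}=\sigma(sH)/q-1$ differs from the paper's $\mu_{\chi_i}=n-1-S_{l_i}/q$ by the swap $s\leftrightarrow -s$ (a Chevalley--Weil sign convention); since every step of your argument is symmetric under this swap, nothing is affected.
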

Using primitive central idempotents, we also compute the CM-type of these Jacobians. Of note is the straightforward consequence that the CM Jacobian $\text{Jac}(C)$ as defined in Theorem~\ref{thm:1.1} above is isogeneous to the Jacobian $\text{Jac}(C')$ arising from a $(\mathbb{Z}/q\mathbb{Z})$-cyclic cover $C' \rightarrow \mathbb{P}^1$ with a datum of $\bm{y} = (1, k, k^2)$.

For the second family of groups $G \simeq (\mathbb{Z}/q\mathbb{Z}) \rtimes_2 (\mathbb{Z}/4\mathbb{Z})$, we are mainly interested in the local monodromy $\bm{m} = (q, 4, 4)$. This family of groups gives rise to symplectic characters of nonzero multiplicity in the space $H^0(C, \omega_C)$. In Section~\ref{sec:5}, we prove:
\begin{theorem}[See Proposition~\ref{prop:6.4}, Theorem~\ref{thm:cmtypedicyclic}]
\label{thm:1.3}
Suppose $G \simeq (\mathbb{Z}/q\mathbb{Z}) \rtimes_2 (\mathbb{Z}/4\mathbb{Z})$ where $q$ is an odd prime. Any $G$-Galois cover $C \rightarrow \mathbb{P}^1$ with local monodromy $\bm{m} = (q, 4, 4)$ is special with $N = 0$, and the Jacobian $\text{Jac}(C)$ has complex multiplication by $\mathbb{Q}(\zeta_{4q})$.
\end{theorem}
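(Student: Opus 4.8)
The plan is to feed the dicyclic group through the representation-theoretic criterion of Proposition~\ref{prop:1.1} and then, for the CM part, to analyse the endomorphism algebra of the resulting Jacobian as in Theorem~\ref{prop:jacobiancmsymplectic}. Write $G=\langle s,t\mid s^{q}=t^{4}=1,\ tst^{-1}=s^{-1}\rangle$ and put $a:=st^{2}$, so $A:=\langle a\rangle=\langle s,t^{2}\rangle$ is cyclic of order $2q$, normal of index $2$, with $tat^{-1}=a^{-1}$ and $t^{2}=a^{q}$. First I would record $\text{Irr}(G)$: four linear characters through $G^{\text{ab}}\cong\mathbb{Z}/4\mathbb{Z}$ (the trivial and order-$2$ ones orthogonal, the two order-$4$ ones complex-valued and mutually dual) and $q-1$ two-dimensional characters $\chi_{h}=\text{Ind}_{A}^{G}\lambda_{h}$, $h=1,\dots,q-1$, $\chi_{h}=\chi_{-h}$, where $\lambda_{h}(a)=\zeta_{2q}^{h}$; each $\chi_{h}$ is real-valued and vanishes on the nontrivial coset $tA$. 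Since every element of $tA$ squares to $t^{2}=a^{q}$ and $\chi_{h}(a^{q})=2(-1)^{h}$, the Frobenius--Schur indicator is $\iota_{\chi_{h}}=(-1)^{h}$, so $\chi_{h}$ is symplectic for $h$ odd and orthogonal for $h$ even.

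Next I would pin down the covers. An element of order $q$ lies in $\langle s\rangle$ and an element of order $4$ lies in $tA$, so in any triple $\bm{x}$ with $\bm{m}=(q,4,4)$ and $x_{1}x_{2}x_{3}=1$ one has $x_{1}\in\langle s\rangle$ while $x_{2},x_{3}$ lie in the two distinct cosets $t\langle s\rangle$ and $t^{-1}\langle s\rangle$; conjugating by $\langle s\rangle$ and composing with an automorphism $s\mapsto s^{k}$ normalizes $\bm{x}$ to $(s^{-1},t,st^{-1})$, recovering Proposition~\ref{prop:6.4}. Riemann--Hurwitz gives $g_{C}=q-1$, and the Chevalley--Weil formula applied to this triple shows that the only characters occurring in $H^{0}(C,\omega_{C})$ are the $(q-1)/2$ symplectic ones $\chi_{h}$ ($h$ odd), each with multiplicity exactly $1$. (The vanishing for the other characters can also be read off from the fact that the intermediate quotients $C/\langle s\rangle$ and $C/\langle t^{2}\rangle$ are both $\mathbb{P}^{1}$.) Thus conditions (i) and (ii) of Proposition~\ref{prop:1.1} are vacuous and (iii) holds because every multiplicity lies in $\{0,1\}$; hence $N=0$, $\mathsf{Z}(G,\bm{x})$ is a special point, and $\text{Jac}(C)$ has CM.

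For the CM field: the symplectic characters $\{\chi_{h} : h\text{ odd}\}$ form a single $\text{Gal}(\overline{\mathbb{Q}}/\mathbb{Q})$-orbit with Schur index $2$ over $\mathbb{Q}$, so $H^{1}(C,\mathbb{Q})$ is a simple $\mathbb{Q}[G]$-module of $\mathbb{Q}$-dimension $2(q-1)=2g_{C}$, and the image of $\mathbb{Q}[G]$ in $\text{End}^{0}(\text{Jac}(C))$ is the corresponding Wedderburn component $D$: a quaternion algebra over $F:=\mathbb{Q}(\zeta_{q})^{+}$, totally definite since $\iota_{\chi_{h}}=-1$, realized as the cyclic algebra $(\mathbb{Q}(\zeta_{q})/F,\,-1)$ attached to the index-two subgroup $A$ with cocycle value $\lambda_{h}(t^{2})=-1$. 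As $\text{Jac}(C)$ is CM, $\text{End}^{0}(\text{Jac}(C))$ is a product of matrix algebras over CM fields containing the noncommutative $D$; since $H_{1}(C,\mathbb{Q})$ is a rank-one $D$-module, one gets $\text{Jac}(C)\sim B^{2}$ with $B$ simple, $\text{End}^{0}(\text{Jac}(C))\cong M_{2}(L)$, $L$ a CM field with $F\subseteq L$, $[L:F]=2$, which is simultaneously a maximal subfield of $D$ and equals $\text{End}^{0}_{G}(\text{Jac}(C))$. A local Brauer-group computation — the relevant case of Theorem~\ref{prop:jacobiancmsymplectic} — then shows $D$ splits after base change to $\mathbb{Q}(\zeta_{4})$, that $F(\zeta_{4})\hookrightarrow D$, and that $F(\zeta_{4})$ is the CM quadratic extension of $F$ singled out by the complex structure, so $L=F(\zeta_{4})=\mathbb{Q}(\zeta_{q}+\zeta_{q}^{-1},\,\zeta_{4})$. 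Finally $a=st^{2}$ acts on $H^{0}(C,\omega_{C})$ with every primitive $2q$-th root of unity occurring once as an eigenvalue, so $\mathbb{Q}[a^{\ast}]\cong\mathbb{Q}(\zeta_{2q})=\mathbb{Q}(\zeta_{q})$ is a maximal subfield of $D$; together with the central $L=F(\zeta_{4})$ it generates $\mathbb{Q}(\zeta_{q})\cdot F(\zeta_{4})=\mathbb{Q}(\zeta_{4q})$, a maximal subfield of $\text{End}^{0}(\text{Jac}(C))$. Hence $\text{Jac}(C)$ has CM by $\mathbb{Q}(\zeta_{4q})$, and the CM type is then extracted from the joint eigenvalues of $a^{\ast}$ and $t^{\ast}$ on $H^{0}(C,\omega_{C})$ (Theorem~\ref{thm:cmtypedicyclic}).

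The hardest step will be the identification of the center $L$ of $\text{End}^{0}(\text{Jac}(C))$: the $G$-action and the Chevalley--Weil data only pin down $D$ and the shape $\text{Jac}(C)\sim B^{2}$, whereas deciding which maximal subfield of $D$ is the center — rather than merely "some CM quadratic extension of $F$ that splits $D$" — forces one to bring in the Hodge structure (the signature of the CM type of $B$, visible in the $a^{\ast}$-eigenvalues on $H^{0}(C,\omega_{C})$) together with a $q\bmod 4$ case analysis of the local Hasse invariants of $D$ at the primes above $2$ and $q$, and to check compatibility with the Rosati involution of the canonical polarization. This is exactly the content packaged in Theorem~\ref{prop:jacobiancmsymplectic}.
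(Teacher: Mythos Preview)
Your treatment of $N=0$ matches the paper's: the same Chevalley--Weil computation and the same appeal to the criterion in Proposition~\ref{prop:1.1}/Corollary~\ref{corr:3.4}. Your parametrization of the degree-$2$ characters via induction from the cyclic subgroup $A$ of order $2q$ (rather than the paper's $\psi_1\otimes\chi_i$) is a pleasant variant and makes the Frobenius--Schur calculation $\iota_{\chi_h}=(-1)^h$ particularly transparent.

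For the CM field your route diverges from the paper's, and your final paragraph contains a real misconception about what is hard. The paper's argument is direct and short: once Proposition~\ref{prop:6.4} gives $H^1(C,\mathbb{Q})\cong\mathcal{W}^-$ with each $\mu_{\psi_1\otimes\chi_i}=1$, and one checks (by writing down matrices, as in~\eqref{eq:symplecticrep}) that $m_{\mathbb{Q}(\zeta_4)}(\psi_1\otimes\chi_i)=1$, Theorem~\ref{prop:jacobiancmsymplectic} immediately produces an embedding $M_2(F_0(\zeta_4))\hookrightarrow\text{End}^0(\text{Jac}(C))$. Any maximal subfield of $M_2(F_0(\zeta_4))$ has $\mathbb{Q}$-degree $2(q-1)=2g$, so choosing the CM maximal subfield $\mathbb{Q}(\zeta_{4q})=F_0(\zeta_4,\zeta_q)$ finishes. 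There is no need to identify the center of $\text{End}^0(\text{Jac}(C))$ at all.

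Your structural detour---deduce $\text{End}^0\cong M_2(L)$ for some CM quadratic $L/F$ and then identify $L$---is a legitimate alternative, but you badly overstate what is needed to pin down $L$. Theorem~\ref{prop:jacobiancmsymplectic} is \emph{not} a ``local Brauer-group computation'' with ``Hasse invariants,'' there is no ``$q\bmod 4$ case analysis,'' and the Rosati involution never enters. Its content is elementary representation theory: the rational irreducible $\mathcal{W}^-$ already carries a $\mathbb{Q}(\zeta_4)[G]$-module structure because the Schur index drops to $1$ over $\mathbb{Q}(\zeta_4)$, and this extra $\mathbb{Q}(\zeta_4)$-action commutes with $G$, preserves the lattice $H^1(C,\mathbb{Q})$, and is compatible with the complex structure. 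That alone gives $M_2(F(\zeta_4))\hookrightarrow\text{End}^0$; comparing $\mathbb{Q}$-dimensions ($4(q-1)$ on both sides, by your own computation of $\text{End}^0\cong M_2(L)$) forces equality, hence $L=F(\zeta_4)$ with no arithmetic input whatsoever. So the step you flag as hardest is in fact the cheapest.
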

As an extension of the above, we also study the situation where the Galois group is $G \simeq Q_8$; the quaternion group is closely tied to the family of dicyclic groups of order $4q$. By computing the CM-type (using a similar method as for faithful metacyclic covers), we see that both cases $G \simeq  (\mathbb{Z}/q\mathbb{Z}) \rtimes_2 (\mathbb{Z}/4\mathbb{Z})$ and $G \simeq Q_8$ give rise to new examples of CM Jacobian varieties. It is of note that the endomorphism algebras of Jacobian varieties arising from such groups $G$ are studied over the base field $\mathbb{Q}(\zeta_4)$, rather than $\mathbb{Q}$.

\subsubsection*{Organization of the Paper}

Section~\ref{Background} contains basic facts and definitions about Galois covers of $\mathbb{P}^1$, rational group algebras, and abelian varieties with endomorphisms.

Section~\ref{sec:Jacobians} reviews special subvarieties of $\mathsf{A}_g$ and the CM Jacobians arising from these special subvarieties. We study several criteria for determining if the Jacobian of a Galois cover $C \rightarrow \mathbb{P}^1$ is of CM-type, analyzing both the complex-valued and symplectic cases.

Sections~\ref{sec:4} and~\ref{sec:5} utilizes methods introduced in previous sections to work out the respective examples of Galois groups: $G \simeq (\mathbb{Z}/q\mathbb{Z}) \rtimes (\mathbb{Z}/n\mathbb{Z})$ and $G \simeq (\mathbb{Z}/q\mathbb{Z}) \rtimes_2 (\mathbb{Z}/4\mathbb{Z})$. We obtain new examples of Jacobians of CM-type.

\section{Notation and Background}
\label{Background}

\subsection{Galois Covers of \texorpdfstring{$\mathbb{P}^1$}{Lg} Branched at Three Points}
\label{sec:2.1}

The information about a Galois cover of $\mathbb{P}^1$ is encoded by its \textit{monodromy datum}.
\begin{definition}
\label{monodromydatum}
A \textit{monodromy datum} is a pair $(G, \bm{x})$, where $G$ is a finite group and $\bm{x} = (x_1, x_2, x_3)$ is an triple of nonidentity elements of $G$ such that $\prod_{i = 1}^r x_i = 1$ and $x_1, x_2, x_3$ generate $G$. We call $\bm{x}$ a \textit{spherical system of $3$ generators} (SSG) of $G$. The triple $\bm{m} = (m_1, m_2, m_3)$, where $m_i$ denotes the order of $x_i$, is called the \textit{local monodromy} of $(G, \bm{x})$.
\end{definition}
The following constructions relate Galois covers of $\mathbb{P}^1$ with their monodromy data; see~\cite[Section 2.3]{frediani2014shimura} for more details. Let $t := (t_1, t_2, t_3)$ be an triple of distinct points in $\mathbb{P}^1$. Set $U_t := \mathbb{P}^1 \setminus \{t_1, t_2, t_3 \}$ and pick a base point $t_0 \in U_t$. For a smooth projective curve $C$ and Galois cover $f : C \rightarrow \mathbb{P}^1$ with branch locus $t$, define $V = f^{-1}(U_t)$. Then, there is a surjective homomorphism onto the group of deck transformations: $\pi_1(U_t, t_0) \twoheadrightarrow G := \text{Aut}(f |_V)$. By fixing an identification $\pi_1(U_t, t_0) = \langle \gamma_1, \gamma_2, \gamma_3, \ | \ \gamma_1\gamma_2\gamma_3 = 1 \rangle$, the surjective homomorphism $\pi_1(U_t, t_0) \twoheadrightarrow G$ gives rise to a spherical system of $3$ generators $x_1, x_2, x_3$ of $G$; the order $m_i$ of $x_i$ is the local monodromy around $t_i$. Thus, a Galois cover of $\mathbb{P}^1$ with branch locus $t$ determines a monodromy datum. Recall that the Riemann existence theorem allows the reverse of the above process: a monodromy datum $(G, \bm{x})$ defines a (trivial) family of $G$-Galois covers of $\mathbb{P}^1$. Namely, for every pair $(G, \bm{x})$ we obtain a point $\mathsf{M}(G, \bm{x}) \subseteq \mathsf{M}_g$. Let $\mathsf{Z}(G, \bm{x}) \subseteq \mathsf{A}_g$ be (the closure of) the image of $\mathsf{M}(G, \bm{x})$ under the Torelli morphism; this is a point of $\mathsf{A}_g$ which corresponds to the Jacobian of the underlying $G$-Galois cover.

Fix a monodromy datum $(G, \bm{x})$, and let $t$ be the locus. For any branch point $t_i$, the point $P \in f^{-1}(t_i)$ has ramification index $e_P = m_i$ the local monodromy. Thus, $\# f^{-1}(t_i) = \frac{\# G}{m_i}$. The Riemann-Hurwitz formula computes the genus $g := g(C)$ of the curve as
\begin{equation}
\label{eq:RiemannHurwitz}
    g = 1 - \# G + \frac{\sum_{P \in C} (e_P - 1)}{2} = 1 - \# G + \frac12 \cdot \sum_{i = 1}^r  \frac{\# G}{m_i}(m_i - 1),
\end{equation}
depending only on the local monodromy.

We recall that inequivalent monodromy data may give rise to the same subvariety in $\mathsf{M}_g$. Namely, 
let $\mathbf{B}_3 = \langle \sigma_1, \sigma_2 \ | \ \sigma_1\sigma_2\sigma_1 = \sigma_2\sigma_1\sigma_2 \rangle$ be the braid group on $3$ letters. Given any datum $(G, \bm{x})$, The braid $\mathbf{B}_3$ acts on $\bm{x}$ by \textit{Hurwitz moves}, i.e.
\begin{equation}
    \sigma_i \cdot x_i = x_ix_{i + 1}x_i^{-1}, \quad \sigma_i \cdot x_{i + 1} = x_i, \qquad \sigma_i \cdot x_j = x_j, \ \text{for} \ j \neq i, i + 1,
\end{equation}
for any generator $\sigma_i$ of $\mathbf{B}_3$, and extending $\sigma \cdot \bm{x} = (\sigma \cdot x_1, \sigma \cdot x_2, \sigma \cdot x_3)$ for any arbitrary $\sigma \in \mathbf{B}_3$. Furthermore, $\text{Aut}(G)$ acts pointwise on $\bm{x}$. Two monodromy data $(G, \bm{x}), (G, \bm{x}')$ give rise to the same point in $\mathsf{M}_g$ if the images of $\bm{x}, \bm{x}' \in G \times G \times G$ are in the same orbit under this canonical action of $\text{Aut}(G) \times \mathbf{B}_3$. In this case, they are called \textit{Hurwitz equivalent}~\cite[Section 2.3]{frediani2014shimura}.

\subsection{Preliminaries in Representation Theory}\label{sec:representationtheory}

In the following section, let $K$ be a subfield of the complex numbers $\mathbb{C}$. The main applications in this paper are the cases where $K = \mathbb{Q}$ or $K$ is a totally imaginary quadratic extension of $\mathbb{Q}$. The results below may generally be extended to the case where $K$ is any field of characteristic $0$.

Maschke's theorem asserts that the group algebra $K[G]$ of a finite group $G$ is semisimple. In particular, since $K[G]$ is finite dimensional over $K$, we may write its Wedderburn-Artin decomposition
\begin{equation}
\label{eq:WedderburnDec}
    K[G] \simeq R_1 \times \dots \times R_s, \quad R_i = M_{n_i}(\Delta_i). 
\end{equation}
Here $n_i \ge 1$ are integers and $\Delta_i$ are division algebras of finite dimension over $K$. While the numbers $n_i$ are easy to compute via dimension counting arguments, it is generally much harder to draw conclusions on the division rings $\Delta_i$ themselves.

We may study $K[G]$ via the complex theory. To this end, let $\text{Irr}(G)$ be the usual irreducible characters over $\mathbb{C}$ (in what follows, when we do not specify the field in which a representation/character is irreducible, we mean it is irreducible over $\mathbb{C}$). For $\chi \in \text{Irr}(G)$, let $K(\chi)$ be the character field of $\chi$ with respect to $K$, and let $e(\chi) = \frac{1}{\# G}\sum_{x \in G}\chi(1)\chi(x^{-1})x$ is the corresponding primitive central idempotent in $\mathbb{C}[G]$. Then, $e_{K}(\chi) = \sum_{\sigma \in \text{Gal}(K(\chi)/K)} e(\sigma \circ \chi)$, the sum of the $K$-Galois conjugate primitive central idempotents, is a primitive central idempotent in $K[G]$, and $K[G]e_{K}(\chi)$ is the corresponding simple Wedderburn component in $K[G]$. In fact, a direct computation verifies that the character field $K(\chi)$ may be embedded as the center of $K[G]e_{K}(\chi)$~\cite[Proposition 1.4]{Yamada1974}. Thus, $K[G] e_{K}(\chi) \simeq M_n(\Delta)$ for some integer $n \ge 1$ and some division ring $\Delta$ with center $K(\chi)$.

Now, recall that the degree of $\Delta$ over its center $K(\chi)$ over its perfect square. Namely, the number $m := m_{K}(\chi)$ such that $m^2 = [\Delta : K(\chi)]$ is called the \textit{Schur index} of $\chi$ over $K$~\cite[Chapter 12.2]{Serre1977}. Sometimes, the Schur index refers to the simple component $M_n(\Delta)$. Note for a subfield $F$ of $\Delta$ containing its center $K(\chi)$ is maximal (with respect to inclusion) if and only if $[F : K(\chi)] = m$. By no means are maximal subfields $F$ unique. The equivalent representation-theoretic definition of the Schur index is as follows~\cite[Chapter 10]{Isaacs2006}:
\begin{proposition}
\label{prop:2.3}
Take $\chi \in \text{Irr}(G)$. Let $\mathcal{W}$ be the irreducible $K[G]$-module corresponding to the simple algebra $K[G] e_{K}(\chi)$ as defined above. Then, the character of $\mathcal{W}$ as a $K$-representation is
\begin{equation}
\label{eq:2.3}
    \chi_{\mathcal{W}} := m_{K}(\chi) \cdot \sum_{\sigma \in \text{Gal}(K(\chi) / K)} (\sigma \circ \chi).
\end{equation}
\end{proposition}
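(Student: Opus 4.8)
The plan is to derive the character formula in \eqref{eq:2.3} by comparing the $K$-rational irreducible module $\mathcal{W}$ with the complex picture, using the standard dictionary between the Wedderburn component $K[G]e_K(\chi) \simeq M_n(\Delta)$ and the Galois orbit of $\chi$. First I would pass to the scalar extension: $\mathbb{C} \otimes_K K[G]e_K(\chi)$ decomposes as a product of matrix algebras over $\mathbb{C}$, one factor for each $\mathbb{C}$-irreducible character appearing in $\mathcal{W} \otimes_K \mathbb{C}$. Since $e_K(\chi) = \sum_{\sigma \in \mathrm{Gal}(K(\chi)/K)} e(\sigma \circ \chi)$ is a sum of the primitive central idempotents attached to the $K$-Galois conjugates of $\chi$, the complexified module $\mathcal{W}\otimes_K\mathbb{C}$ is supported exactly on the $\mathbb{C}$-irreducibles $\sigma\circ\chi$ for $\sigma \in \mathrm{Gal}(K(\chi)/K)$, each occurring with one common multiplicity $d$ (the multiplicity is the same across the orbit because $\mathrm{Gal}(K(\chi)/K)$ acts transitively on these constituents and $\mathcal{W}$ is defined over $K$). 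Hence $\chi_{\mathcal{W}} = d \sum_{\sigma} (\sigma\circ\chi)$, and the entire content of the proposition is the identification $d = m_K(\chi)$.

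To pin down $d$, I would compute $\dim_K \mathcal{W}$ in two ways. On one hand, $\mathcal{W}$ is the unique (up to iso) simple module over $M_n(\Delta)$, namely $\Delta^n$ as a column space, so $\dim_K \mathcal{W} = n \cdot \dim_K \Delta = n \cdot [K(\chi):K]\cdot m^2$, writing $m = m_K(\chi)$ and using $[\Delta:K(\chi)] = m^2$. On the other hand, $\dim_K \mathcal{W} = \dim_{\mathbb{C}}(\mathcal{W}\otimes_K\mathbb{C}) = d\sum_{\sigma}\chi(1) = d\cdot[K(\chi):K]\cdot\chi(1)$. Equating the two gives $d\cdot\chi(1) = n\cdot m^2$. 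Finally I would recall the standard fact (from the structure of $M_n(\Delta)$ as the $\chi$-component, e.g.\ \cite[Chapter 10]{Isaacs2006}) that $\chi(1) = n\cdot m$: the complex irreducible of degree $\chi(1)$ sitting inside this component is a direct summand of $\Delta^n \otimes_{K(\chi)} \mathbb{C}$, and since $\Delta\otimes_{K(\chi)}\overline{K(\chi)} \simeq M_m(\overline{K(\chi)})$ one gets that each $\mathbb{C}$-irreducible constituent has degree $n m$. Substituting $\chi(1) = nm$ into $d\chi(1) = nm^2$ yields $d = m = m_K(\chi)$, which is exactly \eqref{eq:2.3}.

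The step I expect to be the main obstacle — or at least the one requiring the most care — is justifying the \emph{uniform} multiplicity $d$ across the Galois orbit and the relation $\chi(1) = n\, m_K(\chi)$ cleanly, since both rest on the interaction between the $\overline{\mathbb{Q}}/K$-Galois action on characters and the splitting behavior of the division algebra $\Delta$ over $K(\chi)$. I would handle this by invoking that $K[G]e_K(\chi)$ is a single Galois orbit sum by construction, so its complexification is $\mathrm{Gal}(K(\chi)/K)$-stable with a transitive action permuting the blocks $M_{\chi(1)}(\mathbb{C})$ attached to the $\sigma\circ\chi$; transitivity forces the multiplicities to agree. The arithmetic identity $\chi(1) = n\, m_K(\chi)$ can be cited directly from the theory of the Schur index as developed in \cite[Chapter 10]{Isaacs2006} (it is essentially the definition of how $n$ and $m$ split the complex degree in a rational Wedderburn component), so no genuinely new work is needed there; the proof is then a short dimension count.
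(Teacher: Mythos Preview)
Your argument is correct, and it reaches the same conclusion as the paper, but the route is genuinely different in emphasis. The paper tensors $\mathcal{W}$ first up to $K(\chi)$, so that $K(\chi)[G]e_K(\chi)$ breaks as a product $\prod_{\sigma} K(\chi)[G]e(\sigma\circ\chi)$ with each factor isomorphic to $M_n(\Delta)$; it then passes to a splitting field $E \supseteq F$ (with $F$ a maximal subfield of $\Delta$), reading off directly from $M_n(\Delta)\otimes_{K(\chi)}E \simeq M_{mn}(E)$ that the irreducible $K(\chi)[G]$-module $V_\sigma$ attached to the $\sigma$-block becomes $m$ copies of the absolutely irreducible of character $\sigma\circ\chi$. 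A final Galois/dimension argument shows $\mathcal{W}\otimes_K K(\chi) \simeq \bigoplus_\sigma V_\sigma$, and the formula follows. In other words, the paper \emph{derives} the identity $\chi(1) = n\,m_K(\chi)$ in the course of the proof (and records it as a consequence afterwards), whereas you take that identity as a black box from \cite{Isaacs2006} and finish with a pure dimension count against $\dim_K \Delta^n$. Your approach is shorter and perfectly rigorous once the cited input is granted; the paper's approach is more self-contained and makes the appearance of the Schur index as the multiplicity visible structurally (via the splitting of $\Delta$), which is useful later when the paper needs to work with explicit splitting fields, e.g.\ in Theorem~\ref{prop:jacobiancmsymplectic}. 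Your justification of the uniform multiplicity across the Galois orbit (via $K$-rationality of $\chi_{\mathcal{W}}$ and transitivity) is fine and is precisely the ``Galois theory and dimension counting'' step the paper alludes to.
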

In particular, this illustrates how the character $\chi$, along with all its $K$-Galois conjugates, is \textit{associated} to the Wedderburn component $K[G] e_K(\chi)$. For completeness, we provide the simple proof of Proposition~\ref{prop:2.3}, as both the semisimple algebra and the representation-theoretic perspectives are important to our work.
\begin{proof}[Proof of Proposition~\ref{prop:2.3}]
The $K(\chi)$-vector space $\mathcal{W} \otimes_{K} K(\chi)$ admits a module structure over $K(\chi)[G]$ and its subring
\begin{equation}
    K(\chi)[G]e_{K}(\chi) \simeq \prod_{\sigma \in \text{Gal}(K(\chi) / K)} K(\chi)[G] e(\sigma \circ \chi).
\end{equation}
Let $V_{\sigma}$ be the irreducible $K(\chi)[G]$-module associated to $K(\chi)[G] e(\sigma \circ \chi)$. Note that each $K(\chi)[G] e(\sigma \circ \chi)$ is isomorphic to $M_n(\Delta)$~\cite[Proposition 1.5]{Yamada1974}. 

Let $F$ be a maximal subfield of $\Delta$. For any extension $E \supseteq F$, the $E$-vector space $V_{\sigma} \otimes_{K(\chi)} E$ is an isotypic module over the simple component $M_{mn}(E) \simeq M_n(\Delta) \otimes_{K(\chi)} E \simeq E[G]e(\sigma \circ \chi)$ of $E[G]$. It follows $V_{\sigma} \otimes_{K(\chi)} E$ is an isotypic sum of $m$ absolutely irreducible representations; its character (as an $E$-representation) is $m(\sigma \circ \chi)$. By Galois theory and dimension counting, one deduces $\mathcal{W} \otimes_{K} K(\chi)$ is the direct sum of $V_{\sigma \circ \chi}$ over all $\sigma \in \text{Gal}(K(\chi) /K)$. Conclude that the complex representation $\mathcal{W} \otimes_{K} \mathbb{C}$ affords the character $m \cdot \sum_{\sigma \in \text{Gal}(K(\chi)/K)} (\sigma \circ \chi)$, as requested.
\end{proof}
In the notation of the above proof, note there is an irreducible $E$-representation of $G$ whose character is $\chi$; we say that the field $E$ \textit{realizes} the character $\chi$. Conversely, any extension $F' \supseteq K(\chi)$ which realizes $\chi$ is a splitting field of $\Delta$, so that $F'$ is an extension of a maximal subfield $F$ of $\Delta$ with $F \supseteq K(\chi)$. Thus, $m_K(\chi)$ is the minimum degree of an extension of $K(\chi)$ in which $\chi$ may be realized. In particular, 
\begin{quote}
    \textit{The character $\chi$ may be realized as an irreducible representation over its character field $K(\chi)$ if and only if $m_K(\chi) = 1$.}
\end{quote}
If $\chi, \psi \in \text{Irr}(G)$ are $K$-Galois conjugate characters, then the Schur indices of $\chi, \psi$ are obviously equal over any field extension of $K$. We refer the reader to~\cite[Chapter 38]{Huppert1998} or~\cite[Chapter 10]{Isaacs2006} for other well-known elementary properties of Schur indices. In the sequel, we shall cite these properties as needed (although we avoid using deep results if not needed).

The dimensions $n_i$ of~\eqref{eq:WedderburnDec} are easy to compute. Recall that if $\mathcal{W}_i$ is the simple $K[G]$-module corresponding to $R_i$, then $\Delta_i^{\circ} = \hom_{K[G]}(\mathcal{W}_i, \mathcal{W}_i)$ and $n_i = \dim_{\Delta_i^{\circ}} \mathcal{W}_i$. Then, if $m_i$ is the Schur index over $K$ of any $\chi \in \text{Irr}(G)$ associated to $R_i$, then a simple dimension counting argument shows $m_i n_i = \chi(1)$.

\subsection{Abelian Varieties with Group Action}\label{sec:2.3}

Let $\Lambda \subseteq \mathbb{C}^g$ be a lattice such that $A = \mathbb{C}^g/\Lambda$ is a complex abelian variety. Recall that the ring of endomorphisms $\text{End}(A)$ is the same as the ring of linear transformations of $\mathbb{C}^g$ preserving the lattice $\Lambda$. Likewise, the $\mathbb{Q}$-algebra $\text{End}^0(A) := \text{End}(A) \otimes_{\mathbb{Z}} \mathbb{Q}$ is the $\mathbb{Q}$-algebra of linear transformations $\mathbb{C}^g \rightarrow \mathbb{C}^g$ preserving the $\mathbb{Q}$-vector space $\Lambda_{\mathbb{Q}} := \Lambda \otimes_{\mathbb{Z}} {\mathbb{Q}}$, known as the \textit{analytic representation}. The endomorphism algebra $\text{End}^0(A)$ is finite dimensional over $\mathbb{Q}$, semisimple, and determined by the isogeny class of $A$.

Now we discuss finite groups acting on $A$, our exposition following~\cite{Lange2004}. Suppose $G$ is a finite group acting on $A$, so that this induces an action of $\mathbb{Q}[G]$ on $A$. More precisely, we have a $\mathbb{Q}$-algebra homomorphism $\rho : \mathbb{Q}[G] \rightarrow \text{End}^0(A) := \text{End}(A) \otimes_{\mathbb{Z}} \mathbb{Q}$. Hence, there is a complex representation of $G$ on $\mathbb{C}^g$ and a rational representation of $G$ on $\Lambda_{\mathbb{Q}}$, the two representations compatible with the embedding $\Lambda_{\mathbb{Q}} \hookrightarrow \mathbb{C}^g$. Then, the endomorphism algebra $\text{End}^0(A)$ may be compared to the semisimple algebra $\mathbb{Q}[G]$. As in~\eqref{eq:WedderburnDec}, we put
\begin{equation}
    \mathbb{Q}[G] \simeq R_1 \times \dots \times R_s, \quad R_i = M_{n_i}(\Delta_i)
\end{equation}
where $n_i \ge 1$ are integers and $\Delta_i$ are division algebras over $\mathbb{Q}$. For $1 \le i \le s$, let $e_i \in \mathbb{Q}[G]$ be the primitive central idempotent corresponding to $R_i$.
\begin{theorem}[{Isotypic Decomposition~\cite[Theorem 2.2]{Lange2004}}]\label{thm:isotypicdec} Let $A$ be a complex abelian variety with action by a finite group $G$ as above.
\begin{enumerate}[nolistsep, label={(\roman{enumi})}]
\item The abelian variety $A$ is isogenous to a product
\begin{equation}
    A \sim B_1^{n_1} \times \dots \times B_s^{n_s}
\end{equation}
where each $B_i$ is an abelian subvariety of $A$, given by $B_i = e_i A$.
\item For each $i$ such that $B_i$ is nonzero, $\Delta_i$ acts faithfully on $B_i$, and $M_{n_i}(\Delta_i)$ acts faithfully on $B_i^{n_i}$. Equivalently, $\text{End}^0(B_i), \text{End}^0(B_i^{n_i})$ contain isomorphic copies of $\Delta_i, M_{n_i}(\Delta_i)$, respectively.
\end{enumerate}
\end{theorem}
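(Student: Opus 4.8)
The plan is to obtain both parts formally from the Wedderburn structure of $\mathbb{Q}[G]$, the sole geometric input being the standard dictionary (a form of Poincaré's complete reducibility theorem) between idempotents of $\text{End}^0(A)$ and isogeny decompositions of $A$. First I would record this dictionary as a lemma. Given an idempotent $\varepsilon \in \text{End}^0(A)$, pick $N \ge 1$ with $N\varepsilon \in \text{End}(A)$ and set $\varepsilon A := \ima(N\varepsilon)$, an abelian subvariety of $A$ independent of $N$. Since $N\varepsilon$ and $N(1-\varepsilon)$ compose to $0$, we get $\ima\big(N(1-\varepsilon)\big) \subseteq (\ker N\varepsilon)^0$, hence $\dim \varepsilon A + \dim (1-\varepsilon)A \le \dim A$; and the addition map $\varepsilon A \times (1-\varepsilon)A \to A$ is surjective because $Na = N\varepsilon(a) + N(1-\varepsilon)(a)$ with multiplication by $N$ an isogeny. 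Therefore it is an isogeny, and by iteration any orthogonal idempotent decomposition $1 = \eta_1 + \dots + \eta_k$ in $\text{End}^0(A)$ yields $A \sim \prod_j \eta_j A$. I would also note that conjugate idempotents $\eta' = u\eta u^{-1}$ with $u \in \text{End}^0(A)^\times$ have isogenous images, again after clearing denominators in $u$ and $u^{-1}$.

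Part (i) then follows in two steps. Applying the lemma to $1 = e_1 + \dots + e_s$ inside $\text{End}^0(A)$ via $\rho$ gives $A \sim \prod_i A_i$ with $A_i := e_i A$. Next, inside a fixed block $R_i = \mathbb{Q}[G]e_i \cong M_{n_i}(\Delta_i)$, the diagonal matrix units $f_{i,1}, \dots, f_{i,n_i}$ are pairwise orthogonal primitive idempotents summing to $e_i$ and mutually conjugate (by permutation matrices) in $R_i$; so the lemma yields $A_i \sim B_i^{n_i}$ where $B_i := f_{i,1}A$. Since any two primitive idempotents of the simple ring $R_i$ are conjugate, $B_i$ is well defined up to isogeny, and when $n_i = 1$ it coincides with $e_i A$ as in the statement.

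For part (ii), fix $i$ with $B_i \ne 0$, equivalently $A_i \ne 0$. The algebra homomorphism $R_i \to \text{End}^0(A_i)$ coming from $\rho$ sends $e_i$ to $\id_{A_i} \ne 0$, hence is nonzero; as $R_i$ is simple it is injective, so $M_{n_i}(\Delta_i) \hookrightarrow \text{End}^0(A_i)$. Using the isogeny $A_i \sim B_i^{n_i}$ to identify $\text{End}^0(A_i) \cong M_{n_i}(\text{End}^0(B_i))$ compatibly with the matrix structure, the corner algebra $f_{i,1}R_i f_{i,1} \cong \Delta_i$ is carried into $\text{End}^0(B_i)$; thus $\Delta_i$ acts faithfully on $B_i$, and consequently $M_{n_i}(\Delta_i)$ acts faithfully on $B_i^{n_i}$. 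Equivalently, $\text{End}^0(B_i)$ and $\text{End}^0(B_i^{n_i})$ contain copies of $\Delta_i$ and $M_{n_i}(\Delta_i)$.

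The main obstacle is not conceptual but a matter of care: the idempotents $e_i, f_{i,j}$ and the conjugating units live a priori only in $\text{End}^0(A) = \text{End}(A) \otimes \mathbb{Q}$, so throughout one must clear denominators and verify that the resulting abelian subvarieties $\varepsilon A$ — and the isogenies between them — are independent of the auxiliary integers chosen, and that the identification $\text{End}^0(A_i) \cong M_{n_i}(\text{End}^0(B_i))$ genuinely respects the $R_i$-action. Once this bookkeeping is in place, the argument is just Morita theory over the simple components $R_i$, transported through the semisimple $\mathbb{Q}$-linear isogeny category of abelian varieties.
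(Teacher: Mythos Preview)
Your argument is correct and is essentially the standard one; it is in fact more detailed than what the paper itself offers. The paper does not prove this theorem: for (i) it simply cites \cite[Proposition~1.1 and Proposition~2.1]{Lange2004}, and for (ii) it says the claim is ``a straightforward consequence of (i) whose proof is omitted.'' Your lemma on orthogonal idempotents and isogeny splittings, followed by the refinement $e_iA \sim B_i^{n_i}$ via the diagonal matrix units $f_{i,j}$, is exactly the content of those two propositions in Lange--Recillas, and your part (ii) via simplicity of $R_i$ and the corner $f_{i,1}R_if_{i,1}\cong\Delta_i$ is precisely the ``straightforward consequence'' the paper alludes to.

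One point worth flagging: the theorem as stated writes ``$B_i = e_iA$,'' whereas you correctly define $B_i$ as the image of a \emph{primitive} (non-central) idempotent $f_{i,1}$, and note that the two agree only when $n_i=1$. The paper's own later usage (e.g.\ ``$A_i := B_i^{n_i}$'' and ``$A_i = e_i\text{Jac}(C)$'') confirms that $e_iA$ is meant to be the isotypic component $A_i$, not $B_i$ itself; so your reading is the intended one and your handling of this is appropriate.
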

\begin{proof}
For (i), see~\cite[Proposition 1.1]{Lange2004} and~\cite[Proposition 2.1]{Lange2004}. (ii) is a straightforward consequence of (i) whose proof is omitted, although we will see similar ideas in the proof of Theorem~\ref{prop:jacobiancmsymplectic}.
\end{proof}
In the notation of the above theorem, each $A_i := B_i^{n_i}$ is called an \textit{isotypic component}.
\begin{remark}
Unlike the Poincar\'e irreducibility lemma, Theorem~\ref{thm:isotypicdec} isotypic decomposition is not necessarily an isogeny decomposition of $A$ into simple abelian varieties. Some $B_i$'s may not be simple, and in fact, some $B_i$'s may be zero.
\end{remark}

\subsection{Complex Multiplication of Abelian Varieties}
\label{sec:2.4}

We will now review the basic theory of complex multiplication (CM), following~\cite{jsmilne}. Suppose $A$ is a $g$-dimensional abelian variety. The ring $\text{End}^0(A)$ acts faithfully on the $2g$-dimensional $\mathbb{Q}$-vector space $H^1(A, \mathbb{Q})$, the first singular cohomology group. Thus, any maximal subfield of $\text{End}^0(A)$ is of degree at most $2g$ over $\mathbb{Q}$.
\begin{definition} Let $A$ be a $g$-dimensional abelian variety as above.
\begin{enumerate}[nolistsep, label={(\roman{enumi})}]
\item A field $K$ is called a \textit{CM-field} if it is an imaginary quadratic extension of a totally real number field.
\item The abelian variety $A$ is said to admit \textit{complex multiplication} (CM) if $\text{End}^0(A)$ contains a \textit{CM-field} $K$ of degree $2g$ over $\mathbb{Q}$. In this case, we say $A$ has CM by $K$.

\item A product of abelian varieties $\prod_i A_i$ is said to be CM by the \'etale $\mathbb{Q}$-algebra $\prod_i K_i$ if each $A_i$ has CM by the field $K_i$.
\end{enumerate}
\end{definition}

Suppose $A$ is an abelian variety with CM by $K$, so let $\mathfrak{i} : K \hookrightarrow \text{End}^0(A)$ be an embedding. Then, $H^1(A, \mathbb{Q})$ is a $1$-dimensional $K$-vector space. Consider a basis $\mathcal{B}$ of $H^1(A, \mathbb{Q}) \otimes_{\mathbb{Q}} \mathbb{C} = H^1(A, \mathbb{C})$ such that $a \in K$ acts via the diagonal matrix $\text{diag} \{\sigma_1(a), \dots, \sigma_{2g}(a)\}$, where $\sigma_i$ are the mutually distinct embeddings $K \hookrightarrow \mathbb{C}$. There are $g$ pairs of complex conjugate embeddings. The \textit{Hodge decomposition}
\begin{equation}
\label{eq:hodgedecompositiongeneral}
    H^1(A, \mathbb{C}) \simeq H^0(A, \Omega_A^1) \oplus \overline{H^0(A, \Omega_A^1)}
\end{equation}
preserves $\mathcal{B}$; in particular, $H^0(A, \Omega_A^1)$ induces a choice from each of the $g$ pairs of complex conjugate embeddings. This set of $g$ embeddings is denoted by $\Phi$, and it is called the \textit{CM-type} of the abelian variety $A$. We say that $(A, \mathfrak{i})$ gives rise to the \textit{CM-pair} $(K, \Phi)$. Conversely, any CM-pair $(K, \Phi)$ determines $(A, \mathfrak{i})$ up to isogeny~\cite[Proposition 3.12]{jsmilne}. If $A$ is a simple abelian variety, then $K = \text{End}^0(A)$ and so $A$ itself determines the CM-pair.

On the other hand, if a product of abelian varieties $\prod_i A_i$ has CM by the \'etale $\mathbb{Q}$-algebra $\prod_i K_i$, then the CM-type of $\prod_i K_i$ is a set $\Phi$ of nonzero $\mathbb{Q}$-algebra homomorphisms $\prod_i K_i \rightarrow \mathbb{C}$ given by the disjoint union of types $(K_1, \Phi_1) \sqcup (K_2, \Phi_2) \sqcup \dots$, where we may view $\Phi = \Phi_1 \sqcup \Phi_2 \sqcup \dots$. Each embedding $\sigma \in \Phi_j$, i.e. $\sigma : K_j \hookrightarrow \mathbb{C}$ naturally corresponds to a unique nonzero $\mathbb{Q}$-algebra homomorphism $\prod_i K_i \rightarrow \mathbb{C}$ that annihilates every $K_i$ for $i \neq j$. 

In Section~\ref{sec:Jacobians}, we shall see that the theory of CM closely relates to the theory of abelian varieties with group action.

\section{Jacobians of Galois Covers with Complex Multiplication}
\label{sec:Jacobians}

\subsection{Criterion for CM Jacobians}

The goal in this section is to review and reformulate more explicitly the criterion of Frediani et al. for a monodromy datum $(G, \bm{x})$ (as always, with $3$ branching points) to correspond to a special point in $\mathsf{A}_g$. Consider a $G$-Galois cover $C \rightarrow \mathbb{P}^1$ associated to the monodromy datum $(G, \bm{x})$, and let $g$ be the genus of $C$. The group $H^1(C, \mathbb{Q})$ is a $\mathbb{Q}[G]$-module of dimension $2g$. Moreover, $H^1(C, \mathbb{C}) = H^1(C, \mathbb{Q}) \otimes_{\mathbb{Q}} \mathbb{C}$ has a Hodge structure of type $(1, 0) + (0, 1)$,
\begin{equation}
\label{eq:hodgedecomposition}
    H^1(C, \mathbb{C}) = H^0(C, \omega_C) \oplus \overline{H^0(C, \omega_C)},
\end{equation}
where $H^0(C, \omega_C)$ is the $g$-dimensional vector space of holomorphic 1-forms (this is a specialized case of~\eqref{eq:hodgedecompositiongeneral}). Both $H^0(C, \omega_C), \overline{H^0(C, \omega_C)}$ are $\mathbb{C}[G]$-modules, uniquely determined up to isomorphism. Let $\varphi : G \rightarrow GL(H^0(C, \omega_C))$ be the representation (over $\mathbb{C}$) corresponding to this $\mathbb{C}[G]$-module structure, and let $\mathfrak{S}^2 \varphi : G \rightarrow GL(\mathfrak{S}^2 H^0(C, \omega_C))$ be the induced representation on the symmetric square. Let
\begin{equation}
    N := \dim (\mathfrak{S}^2 H^0(C, \omega_C))^G,
\end{equation}
be the dimension of the $G$-invariant subspace of $\mathfrak{S}^2 \rho$. Frediani et al. asserts that if $N = 0$, then the point $\mathsf{Z}(G, \bm{x}) \subset \mathsf{A}_g$ is special and the $g$-dimensional Jacobian $\text{Jac}(C)$ is CM~\cite[Theorem 3.9]{frediani2014shimura}. Note this is only a \textit{sufficient criterion} for $\mathsf{Z}(G, \bm{x})$ to be special, we have \textit{not} shown that monodromy data $(G, \bm{x})$ with $N > 0$ do not give rise to CM Jacobians (as remarked in~\cite[Section 3]{frediani2014shimura}, this is an interesting problem in its own right).

To compute $N$, we must classify the representation $H^0(C, \omega_C)$. We begin with the well-known formula of Chevalley-Weil. Given any $\chi \in \text{Irr}(G)$, let $\rho_{\chi}$ be the corresponding irreducible representation and $\mu_{\chi}$ the multiplicity of $\rho_{\chi}$ in $H^0(C, \omega_C)$. Let $E_{i, \alpha}$ be the number of eigenvalues of $\rho_{\chi}(x_i)$ equal to $\zeta_{m_i}^{\alpha}$, where $\zeta_n = e^{\frac{2\pi i}{n}}$ is a primitive $n$th root of unity.
\begin{theorem}[{Chevalley-Weil~\cite[Theorem 2.10]{frediani2014shimura}}]
\label{thm:cw}
Let $C \rightarrow \mathbb{P}^1$ be a Galois cover with monodromy datum $(G, \bm{x})$. Recall that $\bm{m} = (m_1, m_2, m_3)$ is the local monodromy of this datum. For irreducible characters $\chi \in \text{Irr}(G)$, let $\rho_{\chi}, E_{i, \alpha}$ be defined as above. The multiplicity $\mu_{\chi}$ of a degree $d_{\chi}$ irreducible character $\chi \in \text{Irr}(G)$ in $H^0(C, \omega_C)$ is given by the following formula:
\begin{equation}
    \mu_{\chi} = - d_{\chi} + \sum_{i = 1}^3 \sum_{\alpha = 0}^{m_i - 1} E_{i, \alpha} \left \langle - \frac{\alpha}{m_i} \right \rangle + \epsilon \quad \text{where} \quad \epsilon = \begin{cases} 1 & \text{$\chi$ is trivial} \\ 0 & \text{otherwise} \end{cases}.
\end{equation}
Here, $\langle q \rangle$ is the fractional part of any $q \in \mathbb{Q}$.
\end{theorem}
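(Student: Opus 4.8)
The approach is to compute each multiplicity $\mu_\chi$ by descending to the quotient curve and invoking Riemann--Roch on $\mathbb{P}^1$. Write $\pi\colon C\to Y:=\mathbb{P}^1$ for the $G$-Galois cover with branch locus $\{t_1,t_2,t_3\}$, let $W_\chi$ be the space of $\rho_\chi$ with $d_\chi=\dim W_\chi$, and recall $\mu_\chi=\dim\bigl(H^0(C,\omega_C)\otimes_{\mathbb{C}}W_\chi^\vee\bigr)^G$. Since $\pi$ is finite, $\pi_*\omega_C$ is locally free of rank $\#G$ over $\mathcal{O}_Y$ and carries a $G$-action; I would form $\mathcal{F}_\chi:=\bigl(\pi_*\omega_C\otimes_{\mathbb{C}}W_\chi^\vee\bigr)^G$, which is locally free, being the direct summand of a locally free sheaf cut out by the averaging idempotent $\tfrac{1}{\#G}\sum_{g\in G}g$. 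Because $\pi_*$ is exact, $-\otimes_{\mathbb{C}}W_\chi^\vee$ is exact, and $(-)^G$ is exact in characteristic $0$, cohomology commutes with all three, so $h^0(Y,\mathcal{F}_\chi)=\mu_\chi$ while $h^1(Y,\mathcal{F}_\chi)=\dim\bigl(H^1(C,\omega_C)\otimes W_\chi^\vee\bigr)^G$; by Serre duality $H^1(C,\omega_C)\cong H^0(C,\mathcal{O}_C)^\vee\cong\mathbb{C}$ with trivial $G$-action, hence $h^1(Y,\mathcal{F}_\chi)=\dim(W_\chi^\vee)^G=\epsilon$.

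Riemann--Roch on $\mathbb{P}^1$ then gives $\mu_\chi-\epsilon=\deg\mathcal{F}_\chi+\operatorname{rk}\mathcal{F}_\chi$. I would compute the rank at any unramified point $y$: there $\pi$ is \'etale, the fiber of $\pi_*\mathcal{O}_C$ is the regular representation $\mathbb{C}[G]$, and $\omega_C\cong\pi^*\omega_Y$ $G$-equivariantly near $\pi^{-1}(y)$, so the fiber of $\mathcal{F}_\chi$ is $(\mathbb{C}[G]\otimes W_\chi^\vee)^G$, of dimension $d_\chi$; thus $\operatorname{rk}\mathcal{F}_\chi=d_\chi$. It then remains to show $\deg\mathcal{F}_\chi=-2d_\chi+\sum_{i=1}^3\sum_{\alpha=0}^{m_i-1}E_{i,\alpha}\langle-\alpha/m_i\rangle$, which combined with the above yields the claimed formula.

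For the degree I would localize at the branch points. Using the projection formula and $\omega_C=\pi^*\omega_Y\otimes\mathcal{O}_C(R)$ with $R=\sum_i\sum_{P\mid t_i}(m_i-1)P$ the ramification divisor, one has $\mathcal{F}_\chi\cong\omega_Y\otimes\mathcal{G}_\chi$ with $\mathcal{G}_\chi:=\bigl(\pi_*\mathcal{O}_C(R)\otimes W_\chi^\vee\bigr)^G$, so $\deg\mathcal{F}_\chi=-2d_\chi+\deg\mathcal{G}_\chi$, and $\deg\mathcal{G}_\chi$ is a sum of local contributions since $\mathcal{G}_\chi$ is locally free of rank $d_\chi$ and trivial away from $t_1,t_2,t_3$. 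At $t_i$ I would choose $P\in\pi^{-1}(t_i)$ with cyclic stabilizer of order $m_i$, a uniformizer $z$ at $P$ with $z^{m_i}$ a uniformizer at $t_i$, normalized so that the distinguished generator attached to $x_i$ acts on $z$ by a fixed primitive $m_i$-th root of unity; decomposing the completed stalk of $\pi_*\mathcal{O}_C(R)$ at $t_i$ over $\widehat{\mathcal{O}}_{Y,t_i}$ into rank-one isotypic pieces for the stabilizer, the piece on which the stabilizer acts through $\zeta_{m_i}^\alpha$ contributes $\langle-\alpha/m_i\rangle$ to $\deg\mathcal{G}_\chi$ per basis vector of the $\zeta_{m_i}^\alpha$-eigenspace of $\rho_\chi(x_i)$. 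Summing over those $E_{i,\alpha}$ eigenvectors, over the $\#G/m_i$ points above $t_i$ (permuted transitively by $G$, so only one copy survives the invariants), and over $i=1,2,3$ produces the asserted degree.

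The main obstacle is this local computation, and specifically matching conventions so that the argument of the fractional part comes out as $-\alpha/m_i$ rather than $\alpha/m_i$: this is forced only once one fixes, consistently, how the generator $x_i$ is attached to a loop around $t_i$ and how that loop acts on a local coordinate on $C$, and an inconsistent choice silently replaces $\chi$ by $\overline{\chi}$ throughout. A cleaner alternative that avoids the sheaf bookkeeping is to compute the virtual character $[H^0(C,\omega_C)]-[H^1(C,\omega_C)]\in R_{\mathbb{C}}(G)$ directly via the holomorphic Lefschetz fixed-point formula (equivalently $G$-equivariant Riemann--Roch): for each $g\neq1$ the fixed points of $g$ on $C$ all lie over the branch points, each contributing a term $(1-\lambda^{-1})^{-1}$ for a root of unity $\lambda$ determined by the local action, and summing these geometric-type sums over the cyclic stabilizers and pairing against $\chi$ yields exactly the fractional parts, while the $g=1$ term together with $h^1=\epsilon$ accounts for $-d_\chi$ and $\epsilon$. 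Either way the arithmetic at the three branch points is the heart of the matter.
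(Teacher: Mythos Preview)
The paper does not actually prove this statement: Theorem~\ref{thm:cw} is quoted verbatim from \cite[Theorem~2.10]{frediani2014shimura} and used as a black box throughout Sections~\ref{sec:4} and~\ref{sec:5}. So there is no ``paper's own proof'' to compare against; your proposal supplies an argument where the paper has none.

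That said, your sketch is a perfectly standard and correct route to the Chevalley--Weil formula. Pushing $\omega_C$ down to $\mathbb{P}^1$, cutting out the $\chi$-isotypic piece, and applying Riemann--Roch on the base is exactly how one usually proves this (it is essentially the argument in Weil's original paper, and the version in Frediani et al.\ is stated in the same spirit). Your identification of $h^1(\mathcal{F}_\chi)=\epsilon$, $\operatorname{rk}\mathcal{F}_\chi=d_\chi$, and the reduction of $\deg\mathcal{F}_\chi$ to a sum of local terms at the three branch points is all correct. You are also right that the only genuinely delicate point is the sign/convention bookkeeping that produces $\langle -\alpha/m_i\rangle$ rather than $\langle \alpha/m_i\rangle$; this is tied to whether one works with $H^0(C,\omega_C)$ or its conjugate in the Hodge decomposition, and the paper's convention (inherited from \cite{frediani2014shimura}) is the one matching your stated formula. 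The alternative via the holomorphic Lefschetz fixed-point formula that you mention at the end is equally valid and arguably cleaner for getting the signs right. Either approach would be acceptable here, but for the purposes of this paper a citation suffices.
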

Now, let $\sigma : G \rightarrow GL(V)$ be any complex representation of $G$ with character denoted by $\chi_{\sigma}$, and let $\chi_{\mathfrak{S}^2 \sigma}$ be the character of the induced representation $\mathfrak{S}^2 \sigma$ on $\mathfrak{S}^2 V$. Then, for $x \in G$,
\begin{equation}
\label{eq:S2formula}
    \chi_{\mathfrak{S}^2 \sigma} = \frac12(\chi_{\sigma}(x)^2 + \chi_{\sigma}(x^2)).
\end{equation}
Using~\eqref{eq:S2formula}, the orthogonality relations, and the expansion $\chi_{\varphi} = \sum_{\chi \in \text{Irr}(G)} \mu_{\chi} \chi$, the result of~\cite[Section 2]{frediani2014shimura} is the formula 
\begin{equation}
\label{eq:S2rreformula}
    N = \langle \chi_{\mathfrak{S}^2 \varphi}, 1 \rangle = \frac{1}{2 \cdot \# G} \cdot \sum_{x \in G} \left( \sum_{\chi \in \text{Irr}(G)} \mu_{\chi} \chi(x) \right)^2  + \frac{1}{2 \cdot \# G} \cdot \sum_{x \in G}\sum_{\chi \in \text{Irr}(G)} \mu_{\chi} \chi(x^2).
\end{equation}
We would like to describe a simpler way to determine $N$. To this end, we make use the following result in representation theory over the real numbers due to Frobenius and Schur:
\begin{proposition}[Frobenius-Schur {\cite[Theorem 13.1]{Huppert1998}}]
For any irreducible character $\chi \in \text{Irr}(G)$, define
\begin{equation}
\label{eq:frobschur}
    \iota_{\chi} := \frac{1}{\# G} \sum_{x \in G} \chi(x^2).
\end{equation}
Then, $\iota_{\chi} \in \{-1, 0, 1\}$. Each case occurs in exactly the following situations:
\begin{enumerate}[nolistsep, label={(\roman{enumi})}]
    \item If $\chi$ is real-valued, and there exists a $\mathbb{R}$-representation of $G$ affording the character $\chi$, then $\iota_{\chi} = 1$.
    \item If $\chi$ attains non-real values, then $\iota_{\chi} = 0$.
    \item If $\chi$ is real-valued, and there does not exist a $\mathbb{R}$-representation of $G$ affording the character $\chi$, then $\iota_{\chi} = -1$.
\end{enumerate}
\end{proposition}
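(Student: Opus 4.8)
The plan is to reinterpret $\iota_\chi$ as a difference of dimensions of invariant subspaces, and then match its sign to the existence of a real form. Fix an irreducible $\mathbb{C}$-representation $\rho : G \to GL(V)$ affording $\chi$. By \eqref{eq:S2formula} the character of $\mathfrak{S}^2 V$ at $x$ is $\tfrac12(\chi(x)^2 + \chi(x^2))$, and by the analogous computation the alternating square $\wedge^2 V$ has character $\tfrac12(\chi(x)^2 - \chi(x^2))$. Taking inner products with the trivial character,
\[
\dim(\mathfrak{S}^2 V)^G + \dim(\wedge^2 V)^G = \frac{1}{\# G}\sum_{x \in G}\chi(x)^2 = \langle \chi, \overline{\chi}\rangle, \qquad \dim(\mathfrak{S}^2 V)^G - \dim(\wedge^2 V)^G = \iota_\chi .
\]
Since $\chi$ is irreducible, $\langle \chi, \overline{\chi}\rangle$ is $1$ if $\chi$ is real-valued and $0$ otherwise. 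As both dimensions on the left are non-negative integers, this already yields the trichotomy $\iota_\chi \in \{-1,0,1\}$: if $\chi$ attains a non-real value the sum is $0$, hence both summands vanish and $\iota_\chi = 0$, which is case (ii); if $\chi$ is real-valued then exactly one of $(\mathfrak{S}^2 V)^G$, $(\wedge^2 V)^G$ is one-dimensional and $\iota_\chi = \pm 1$ accordingly.

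It remains, for real-valued $\chi$, to identify the sign. Using $V \cong V^\ast$ (valid since $\chi = \overline{\chi}$), a nonzero element of $(\mathfrak{S}^2 V)^G$ (resp.\ $(\wedge^2 V)^G$) is the same datum as a nonzero $G$-invariant symmetric (resp.\ alternating) bilinear form $B$ on $V$, which is automatically nondegenerate since its radical is a subrepresentation of the irreducible $V$. For case (i): if $\chi$ is afforded by a real representation $V_0$, averaging any inner product over $G$ gives a $G$-invariant positive-definite symmetric form on $V_0$, and extending it $\mathbb{C}$-bilinearly to $V = V_0 \otimes_{\mathbb{R}}\mathbb{C}$ produces a nonzero $G$-invariant symmetric bilinear form, so $(\wedge^2 V)^G = 0$ and $\iota_\chi = 1$. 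For case (iii): we already know $\iota_\chi \in \{1,-1\}$, so it suffices to rule out $\iota_\chi = 1$ under the hypothesis that $\chi$ is not afforded by any real representation. Suppose $\iota_\chi = 1$, so there is a nonzero (hence nondegenerate) $G$-invariant symmetric bilinear form $B$ on $V$; pairing it with a $G$-invariant positive-definite Hermitian form $H$ (again obtained by averaging), define $J : V \to V$ by $H(u, Jv) = B(u,v)$ for all $u$. One checks that $J$ is conjugate-linear and commutes with the $G$-action, so $J^2$ is a $\mathbb{C}$-linear $G$-endomorphism of the irreducible $V$, hence a scalar $\lambda$; since $\lambda H(v,v) = H(v, J^2v) = B(v, Jv) = B(Jv, v) = H(Jv, Jv) > 0$, the scalar $\lambda$ is a positive real. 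Rescaling $J$ by $\lambda^{-1/2}$ we may assume $J^2 = \mathrm{id}$, so $J$ is a real structure on $V$ and its fixed subspace is a $G$-stable real form affording $\chi$, contradicting the hypothesis. Hence $\iota_\chi = -1$ in case (iii). (Had $B$ been alternating, the same computation would give $\lambda < 0$, i.e.\ a quaternionic structure and no real form, which is exactly why the alternating case lands in (iii).)

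I expect the only genuinely delicate point to be the converse direction used for case (iii): constructing the conjugate-linear involution $J$ from the comparison of the invariant bilinear and Hermitian forms, verifying its conjugate-linearity and $G$-equivariance, and pinning down the sign of the scalar $J^2$ via Schur's lemma. Everything else — the two character identities, the non-negativity of the invariant-subspace dimensions, and the averaging constructions for the invariant forms — is routine.
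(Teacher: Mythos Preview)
The paper does not give its own proof of this proposition: it is stated with a citation to Huppert and used as a black box. Your argument is the standard proof of the Frobenius--Schur theorem and is correct. The interpretation of $\iota_\chi$ as $\dim(\mathfrak{S}^2 V)^G - \dim(\wedge^2 V)^G$, the identification (via $V \cong V^\ast$ when $\chi$ is real) of these invariant spaces with $G$-invariant symmetric and alternating bilinear forms, and the construction of a real structure from a symmetric invariant form by comparing it to an invariant Hermitian form are exactly the textbook line.

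One small point worth tightening: in the computation $\lambda H(v,v) = H(v,J^2 v)$ you are implicitly fixing a convention for which slot of $H$ is conjugate-linear, and that same convention must be the one that makes $J$ conjugate-linear rather than linear in the definition $H(u,Jv) = B(u,v)$. With $H$ conjugate-linear in the second slot, $J$ is indeed conjugate-linear, but then $H(v,\lambda v) = \overline{\lambda}\,H(v,v)$, so the chain actually yields $\overline{\lambda} > 0$; since this forces $\lambda$ real and positive anyway, the conclusion is unchanged. You might want to fix the convention explicitly to avoid this wrinkle.
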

The formula~\eqref{eq:frobschur} is called the \textit{Frobenius-Schur indicator}. Given $\chi \in \text{Irr}(G)$, we call $\chi$ an \textit{orthogonal}, \textit{complex-valued}, or \textit{symplectic} character based on whether $\iota_{\chi}$ is $1, 0$, or $-1$, respectively. If $\chi$ is orthogonal or complex-valued, then it is clear $m_{\mathbb{R}}(\chi) = 1$. If $\chi = -1$, however, then $\mathbb{R}(\chi) = \mathbb{R}$, yet $\chi$ may only be realized over $\mathbb{C}$, so we have $m_{\mathbb{R}}(\chi) = 2$. This is an easy example of the determination of the Schur index for the field $K = \mathbb{R}$ (cf. Section~\ref{sec:representationtheory}).

Denote by $\chi^{\ast} = \overline{\chi}$ the dual character of $\chi$. The expression for $N$ in~\eqref{eq:S2rreformula} may be rewritten as
\begin{proposition}[Proposition~\ref{prop:1.1}]
\label{prop:Nformula}
Given a $G$-Galois cover $C \rightarrow \mathbb{P}^1$ with monodromy datum $(G, \bm{x})$, let $N = \dim (\mathfrak{S}^2 H^0(C, \omega_C))^G$. Then,
\begin{equation}
    N = \frac12 \sum_{\chi \in \text{Irr(G)}} \mu_{\chi}(\mu_{\chi^{\ast}} + \iota_{\chi})
\end{equation}
in terms of the multiplicities in $H^0(C, \omega_C)$ and Frobenius-Schur indicator of the complex irreducible characters of $G$.
\end{proposition}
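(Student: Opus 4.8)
The plan is to start from the formula \eqref{eq:S2rreformula} for $N$ and simplify each of its two summands separately using the orthogonality relations and the Frobenius-Schur indicator. First I would expand the square in the first summand: writing $\chi_\varphi = \sum_\chi \mu_\chi \chi$, we get
\begin{equation*}
\frac{1}{2\cdot\#G}\sum_{x\in G}\Big(\sum_{\chi}\mu_\chi\chi(x)\Big)^2 = \frac{1}{2}\sum_{\chi,\psi}\mu_\chi\mu_\psi\cdot\frac{1}{\#G}\sum_{x\in G}\chi(x)\psi(x).
\end{equation*}
The inner sum is $\langle\chi,\overline\psi\rangle = \langle\chi,\psi^\ast\rangle$, which by orthogonality of irreducible characters equals $1$ if $\psi = \chi^\ast$ and $0$ otherwise. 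Hence the first summand collapses to $\tfrac12\sum_\chi \mu_\chi\mu_{\chi^\ast}$.

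Next I would handle the second summand of \eqref{eq:S2rreformula}. Interchanging the order of summation gives
\begin{equation*}
\frac{1}{2\cdot\#G}\sum_{x\in G}\sum_{\chi}\mu_\chi\chi(x^2) = \frac12\sum_\chi\mu_\chi\cdot\frac{1}{\#G}\sum_{x\in G}\chi(x^2) = \frac12\sum_\chi\mu_\chi\,\iota_\chi,
\end{equation*}
where the last equality is precisely the definition \eqref{eq:frobschur} of the Frobenius-Schur indicator. Adding the two simplified summands yields
\begin{equation*}
N = \frac12\sum_{\chi\in\mathrm{Irr}(G)}\mu_\chi\mu_{\chi^\ast} + \frac12\sum_{\chi\in\mathrm{Irr}(G)}\mu_\chi\iota_\chi = \frac12\sum_{\chi\in\mathrm{Irr}(G)}\mu_\chi(\mu_{\chi^\ast}+\iota_\chi),
\end{equation*}
which is the claimed identity. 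The only genuinely substantive input is the derivation of \eqref{eq:S2rreformula} itself from \eqref{eq:S2formula} and the Chevalley-Weil decomposition, but this is quoted from \cite[Section 2]{frediani2014shimura} and I am taking it as given; everything after that is bookkeeping.

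I do not anticipate a real obstacle here — the argument is a direct two-line application of character orthogonality and the definition of $\iota_\chi$. The one point requiring mild care is making sure the indexing in the double sum $\sum_{\chi,\psi}$ is over all ordered pairs of irreducible characters (so that both $(\chi,\chi^\ast)$ and $(\chi^\ast,\chi)$ are counted, which is exactly what produces the symmetric expression $\mu_\chi\mu_{\chi^\ast}$ with the correct normalization against the factor $\tfrac12$), and that real-valued $\chi$ with $\chi = \chi^\ast$ are not double-counted. One should also note in passing that $\sum_\chi\mu_\chi\mu_{\chi^\ast}$ is manifestly symmetric under $\chi\mapsto\chi^\ast$, which is a useful consistency check. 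For the "in particular" consequences stated in Proposition~\ref{prop:1.1} (the three conditions characterizing $N=0$), one observes that every term $\mu_\chi(\mu_{\chi^\ast}+\iota_\chi)$ is nonnegative: if $\iota_\chi = 1$ (orthogonal, so $\chi = \chi^\ast$) the term is $\mu_\chi(\mu_\chi+1)$, vanishing iff $\mu_\chi = 0$; if $\iota_\chi = 0$ (complex-valued) the term is $\mu_\chi\mu_{\chi^\ast}$, and pairing $\chi$ with $\chi^\ast$ the combined contribution vanishes iff $\mu_\chi = 0$ or $\mu_{\chi^\ast}=0$; if $\iota_\chi = -1$ (symplectic, so $\chi=\chi^\ast$) the term is $\mu_\chi(\mu_\chi-1)$, vanishing iff $\mu_\chi\in\{0,1\}$. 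Since $N$ is a sum of nonnegative terms, $N=0$ forces each to vanish, giving the three conditions.
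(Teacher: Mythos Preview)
Your argument is correct and follows essentially the same route as the paper: start from \eqref{eq:S2rreformula}, collapse the first summand via orthogonality of irreducible characters to $\tfrac12\sum_\chi \mu_\chi\mu_{\chi^\ast}$, and identify the second summand as $\tfrac12\sum_\chi \mu_\chi\iota_\chi$ directly from the definition of the Frobenius--Schur indicator. Your additional discussion of the nonnegativity of each term and the case analysis for $N=0$ is exactly the content of Corollary~\ref{corr:3.4}, and your derivation of it is correct as well.
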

\begin{proof}
The second term of~\eqref{eq:S2rreformula} simplifies readily as $\frac12 \sum_{\chi \in \text{Irr}(G)} \mu_{\chi} \iota_{\chi}$. The expansion $\chi_{\sigma}(x)^2$ in the first term of~\eqref{eq:S2rreformula} is a sum of terms of the form $\mu_{\chi} \mu_{\psi} \chi(x) \psi(x)$ for all ordered pairs $(\chi, \psi)$ of irreducible characters. Then, $\frac{\mu_{\chi} \mu_{\psi}}{\# G} \sum_{x \in G}\chi(x) \psi(x) = \mu_{\chi} \mu_{\psi} \langle \chi, \overline{\psi} \rangle$. The inner product $\langle \chi, \overline{\psi} \rangle$ equals $1$ if and only if $\chi = \overline{\psi}$, i.e. $\psi = \chi^{\ast}$; otherwise, it equals $0$. The requested formula follows.
\end{proof}
\begin{corollary}[Proposition~\ref{prop:1.1}]
\label{corr:3.4}
With the notation and assumptions of Proposition~\ref{prop:Nformula}, we have $N = 0$ if and only if all of the following conditions are satisfied:
\begin{enumerate}[nolistsep, label={(\roman{enumi})}]
    \item For all orthogonal characters $\chi \in \text{Irr}(G)$, we have multiplicity $\mu_{\chi} = 0$.
    \item For all complex-valued characters $\chi \in \text{Irr}(G)$, we either have $\mu_{\chi} = 0$ or $\mu_{\chi^{\ast}} = 0$.
    \item For all symplectic characters $\chi \in \text{Irr}(G)$, we have $\mu_{\chi} \in \{0, 1\}$.
\end{enumerate}
\end{corollary}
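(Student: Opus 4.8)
The plan is to start from the closed form $2N = \sum_{\chi \in \mathrm{Irr}(G)} \mu_\chi(\mu_{\chi^{\ast}} + \iota_{\chi})$ supplied by Proposition~\ref{prop:Nformula} and reorganize the sum so that each summand, or each pair of summands, is manifestly a nonnegative integer. First I would record two elementary facts: the multiplicities $\mu_\chi$ are nonnegative integers by definition, and $\iota_{\chi^{\ast}} = \iota_{\chi}$ for every $\chi$ — the latter because $\iota_{\chi^{\ast}} = \frac{1}{\# G}\sum_{x \in G}\overline{\chi(x^2)} = \overline{\iota_{\chi}}$ and $\iota_{\chi}$ is real. Consequently the involution $\chi \mapsto \chi^{\ast}$ on $\mathrm{Irr}(G)$ preserves the Frobenius--Schur indicator, so I can split the index set into the real-valued characters (the fixed points, where $\chi = \chi^{\ast}$ and $\iota_{\chi} = \pm 1$) and the $\chi \mapsto \chi^{\ast}$ orbits of size two (where $\iota_{\chi} = \iota_{\chi^{\ast}} = 0$, since $\iota_\chi = 0$ forces $\chi \neq \overline{\chi} = \chi^{\ast}$).

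Next I would compute the contribution of each piece to $2N$. For a real-valued orthogonal character ($\chi = \chi^{\ast}$, $\iota_{\chi} = 1$) the summand is $\mu_\chi(\mu_\chi + 1)$, which is $\geq 0$ and vanishes iff $\mu_\chi = 0$. For a real-valued symplectic character ($\chi = \chi^{\ast}$, $\iota_{\chi} = -1$) the summand is $\mu_\chi(\mu_\chi - 1)$, a product of two consecutive integers with $\mu_\chi \geq 0$, hence $\geq 0$, vanishing iff $\mu_\chi \in \{0,1\}$. For a two-element orbit $\{\chi, \chi^{\ast}\}$ of complex-valued characters the two corresponding summands together give $\mu_\chi \mu_{\chi^{\ast}} + \mu_{\chi^{\ast}} \mu_\chi = 2\mu_\chi \mu_{\chi^{\ast}} \geq 0$, vanishing iff $\mu_\chi = 0$ or $\mu_{\chi^{\ast}} = 0$.

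Summing up, $2N$ is a sum of nonnegative terms, so $N \geq 0$ always, and $N = 0$ precisely when every term vanishes. By the case analysis this is equivalent to: $\mu_\chi = 0$ for all orthogonal $\chi$ (condition (i)); $\mu_\chi = 0$ or $\mu_{\chi^{\ast}} = 0$ for all complex-valued $\chi$ (condition (ii)); and $\mu_\chi \in \{0,1\}$ for all symplectic $\chi$ (condition (iii)). This is exactly the claimed equivalence. There is no genuine obstacle here; the only point requiring a moment's care is the bookkeeping in the complex-valued case, where one must pair $\chi$ with $\chi^{\ast}$ and notice that the two summands contribute the same cross term, so the pair contributes $2\mu_\chi\mu_{\chi^{\ast}}$ rather than $\mu_\chi\mu_{\chi^{\ast}}$ — at which point the factor $\tfrac12$ in Proposition~\ref{prop:Nformula} makes everything consistent.
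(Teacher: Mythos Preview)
Your proof is correct and is exactly the natural argument: the paper itself states Corollary~\ref{corr:3.4} without proof, treating it as an immediate consequence of the formula in Proposition~\ref{prop:Nformula}, and your case-by-case verification that each summand (or dual pair of summands) in $2N = \sum_{\chi}\mu_\chi(\mu_{\chi^{\ast}}+\iota_\chi)$ is nonnegative and vanishes precisely under conditions (i)--(iii) is the intended reasoning.
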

This is a clean representation theoretic statement for the criterion of $N = 0$. We use Corollary~\ref{corr:3.4} extensively in Sections~\ref{sec:4} and~\ref{sec:5} to identify the monodromy data giving rise to special subvarieties of $\mathsf{A}_g$.
\subsection{Determining the CM field}

The goal in the following series of results is to develop methods to compute the CM-algebra of Jacobians associated to special subvarieties with $N = 0$, relating the results to Corollary~\ref{corr:3.4}. Fix notation as in Section~\ref{sec:2.3}: denote by
\begin{equation}
\label{eq:3.8}
    \text{Jac}(C) \sim A_1 \times A_2 \times \dots \times A_s,
\end{equation}
the isotypic decomposition, where $A_i = e_i \text{Jac}(C)$  (cf. Theorem~\ref{thm:isotypicdec}). We also let $\mathcal{W}_1, \dots, \mathcal{W}_s$ be the rational irreducible representations of $G$, i.e. $\mathcal{W}_i$ corresponds to the simple component $R_i = M_{n_i}(\Delta_i)$ of $\mathbb{Q}[G]$. Since the Frobenius-Schur indicator is obviously preserved by the relation of Galois conjugacy, we may say that a component $R_i$ is \textit{orthogonal}, \textit{complex-valued} or \textit{symplectic} based on whether any $\chi \in \text{Irr}(G)$ associated to $R_i$ is orthogonal, complex-valued, or symplectic, respectively. Again take the usual setup: $C \rightarrow \mathbb{P}^1$ is a $G$-Galois cover with monodromy datum $(G, \bm{x})$, and for any $\chi \in \text{Irr}(G)$, $\mu_{\chi}$ is the multiplicity of the character $\chi$ in the complex representation $H^0(C, \omega_C)$.

First, we study a \say{generic} situation, i.e. we wish to directly extract CM fields from the Wedderburn components of $\mathbb{Q}[G]$. We will see that this is \say{possible} only for the complex-valued components of $\mathbb{Q}[G]$. Let us state the main result; note that we are making no initial assumptions on any Schur indices.
\begin{proposition}
\label{prop:jacobiancm}
Let $1 \le i \le s$, $\chi \in \text{Irr}(G)$ a character associated to $R_i$, and let $d_0$ the degree of any maximal subfield of $R_i = M_{n_i}(\Delta_i)$ over $\mathbb{Q}$. Then, 
\begin{enumerate}[nolistsep, label={(\roman{enumi})}]
    \item We have $d_0 \le 2 \dim A_i$, with equality if and only if $m_{\mathbb{Q}}(\chi) = 1$ and $\mu_{\chi} + \mu_{\chi^{\ast}} = 1$.
    \item If equality holds in (i), then $\chi$ is complex-valued, $\Delta_i = \mathbb{Q}(\chi)$ is a CM-field, and the isotypic component $A_i$ has complex multiplication by a composite field $F_0 \cdot \mathbb{Q}(\chi)$, where $F_0 \supset \mathbb{Q}$ is a totally real extension of degree $n_i$.
\end{enumerate}
\end{proposition}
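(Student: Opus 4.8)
The plan is to analyze the action of the semisimple algebra $\Delta_i$ on the isotypic component $A_i = B_i^{n_i}$ using Theorem~\ref{thm:isotypicdec}, and to pin down the dimension of $B_i$ via the Chevalley--Weil decomposition of $H^1(C,\mathbb{C})$ restricted to the $R_i$-part. First I would recall that, by Theorem~\ref{thm:isotypicdec}(ii), $\operatorname{End}^0(B_i)$ contains a copy of the division algebra $\Delta_i$ with center $\mathbb{Q}(\chi)$, so it contains every maximal subfield $F$ of $\Delta_i$; such an $F$ has degree $m_{\mathbb{Q}}(\chi)\cdot[\mathbb{Q}(\chi):\mathbb{Q}]$ over $\mathbb{Q}$. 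Since any subfield of $\operatorname{End}^0(B_i)$ embeds into $\operatorname{End}$ of the $2\dim B_i$-dimensional $\mathbb{Q}$-vector space $H^1(B_i,\mathbb{Q})$, we get $m_{\mathbb{Q}}(\chi)\cdot[\mathbb{Q}(\chi):\mathbb{Q}] \le 2\dim B_i$. The key bookkeeping step is then to express $\dim B_i$ in terms of the multiplicities $\mu_\chi,\mu_{\chi^\ast}$: the rational representation $H^1(C,\mathbb{Q})\otimes\mathbb{C}$ contains the irreducible $\chi$ and all its $\mathbb{Q}$-Galois conjugates, and by Proposition~\ref{prop:2.3} the $R_i$-isotypic part of $H^1(C,\mathbb{C})$ is a sum of copies of $m_{\mathbb{Q}}(\chi)\sum_{\sigma}(\sigma\circ\chi)$; comparing with the Hodge decomposition~\eqref{eq:hodgedecomposition}, the multiplicity of $\chi$ in $H^1(C,\mathbb{C})$ is $\mu_\chi + \mu_{\chi^\ast}$ (the conjugate contributing $\mu_{\chi^\ast}$ since $\overline{H^0(C,\omega_C)}$ carries the dual). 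This yields $2\dim B_i = (\mu_\chi+\mu_{\chi^\ast})\cdot m_{\mathbb{Q}}(\chi)\cdot[\mathbb{Q}(\chi):\mathbb{Q}]\cdot$(a factor accounting for $\chi$ versus $\bar\chi$ being Galois-conjugate or not, which I must track carefully). Combining with $2\dim A_i = 2 n_i \dim B_i$ and $d_0 = m_{\mathbb{Q}}(\chi)\cdot[\mathbb{Q}(\chi):\mathbb{Q}]\cdot n_i / (\text{same factor})$ gives $d_0 \le 2\dim A_i$ with equality exactly when $m_{\mathbb{Q}}(\chi)=1$ and $\mu_\chi+\mu_{\chi^\ast}=1$. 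This proves (i).

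For (ii), suppose equality holds, so $m_{\mathbb{Q}}(\chi)=1$, $\Delta_i = \mathbb{Q}(\chi)$ (a field, since the Schur index is $1$ forces $\Delta_i$ to equal its center), and $\{\mu_\chi,\mu_{\chi^\ast}\}=\{0,1\}$ with $\mu_\chi\neq\mu_{\chi^\ast}$. I claim $\chi$ must be complex-valued: if $\chi$ were orthogonal or symplectic then $\chi=\chi^\ast$ and $\mu_\chi=\mu_{\chi^\ast}$, contradicting $\mu_\chi+\mu_{\chi^\ast}=1$. Hence $\iota_\chi = 0$ and $\mathbb{Q}(\chi)$ is not real. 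Next I would argue $\mathbb{Q}(\chi)$ is a CM-field: complex conjugation on $\mathbb{C}$ induces the map $\chi\mapsto\bar\chi$, which is a nontrivial automorphism of $\mathbb{Q}(\chi)$ (nontrivial since $\chi\neq\bar\chi$) of order $2$; this automorphism agrees with complex conjugation under every embedding $\mathbb{Q}(\chi)\hookrightarrow\mathbb{C}$ because the set of embeddings is precisely $\{\sigma\circ\chi\}$ and conjugation permutes them compatibly — so $\mathbb{Q}(\chi)$ is a totally imaginary quadratic extension of its fixed field $\mathbb{Q}(\chi)^+$, which is totally real. Finally, to get the CM of $A_i$ itself I would use that $\dim B_i = [\mathbb{Q}(\chi):\mathbb{Q}]/2$ (from the equality case), so $\mathbb{Q}(\chi)\hookrightarrow\operatorname{End}^0(B_i)$ is a CM-field of degree $2\dim B_i$, i.e. $B_i$ has CM by $\mathbb{Q}(\chi)$. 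For the power $A_i = B_i^{n_i}$: by Theorem~\ref{thm:isotypicdec}(ii), $\operatorname{End}^0(A_i)$ contains $M_{n_i}(\mathbb{Q}(\chi))$, and inside this matrix algebra I would exhibit a maximal commutative subfield of the form $F_0\cdot\mathbb{Q}(\chi)$ with $F_0$ totally real of degree $n_i$ — e.g. take $F_0$ generated by a regular element acting diagonally across the $n_i$ factors with distinct totally-real eigenvalues, so that $F_0\otimes_{\mathbb{Q}}\mathbb{Q}(\chi)$ is a field of degree $2 n_i \dim B_i = 2\dim A_i$ sitting in $\operatorname{End}^0(A_i)$ — and verify it is a CM-field as an imaginary quadratic extension of the totally real $F_0\cdot\mathbb{Q}(\chi)^+$.

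The main obstacle I anticipate is the careful bookkeeping in the first paragraph: correctly relating $\dim B_i$, the Schur index $m_{\mathbb{Q}}(\chi)$, the degree $[\mathbb{Q}(\chi):\mathbb{Q}]$, and the multiplicities, while handling the dichotomy of whether $\chi$ and $\bar\chi$ lie in the same $\mathbb{Q}$-Galois orbit (they do, since complex conjugation is a $\mathbb{Q}$-automorphism of $\mathbb{Q}(\chi)$, so $\bar\chi$ is always a $\mathbb{Q}$-Galois conjugate of $\chi$). This means the real subtlety is ensuring the multiplicity count $\mu_\chi + \mu_{\chi^\ast}$ in $H^1$ is attributed to the single Wedderburn component $R_i$ and not double-counted, and that the relation $2\dim A_i = n_i\cdot[\mathbb{Q}(\chi):\mathbb{Q}]\cdot(\mu_\chi+\mu_{\chi^\ast})$ (when $m_{\mathbb{Q}}(\chi)=1$) is correct; once that identity is nailed down, parts (i) and (ii) follow cleanly. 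A secondary point requiring care is verifying the existence of the totally real $F_0$ of the right degree inside $M_{n_i}(\mathbb{Q}(\chi)^+)$ — this is elementary (companion matrix of a totally real separable polynomial of degree $n_i$) but should be stated explicitly for the CM-type computation to go through.
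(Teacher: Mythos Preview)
Your approach is essentially the paper's: both compute $d_0 = n_i\, m_{\mathbb{Q}}(\chi)\,[\mathbb{Q}(\chi):\mathbb{Q}]$ and $2\dim A_i = (\mu_\chi + \mu_{\chi^*})\, n_i\, m_{\mathbb{Q}}(\chi)\,[\mathbb{Q}(\chi):\mathbb{Q}]$ (the paper via the multiplicity of $\mathcal{W}_i$ in $e_iH^1(C,\mathbb{Q})$, you via the factor $B_i$), then compare, and for (ii) both embed a composite $F_0\cdot\mathbb{Q}(\chi)$ inside $M_{n_i}(\mathbb{Q}(\chi))\subseteq\text{End}^0(A_i)$ with $F_0$ totally real of degree $n_i$ linearly disjoint from $\mathbb{Q}(\chi)$. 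Your anticipated ``mysterious factor'' is simply $1$: since $\bar\chi$ is always a $\mathbb{Q}$-Galois conjugate of $\chi$, both $\chi$ and $\chi^*$ lie in the same Wedderburn block $R_i$, and the identities you propose hold exactly as written.
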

\begin{proof}
Since the center of $M_{n_i}(\Delta_i)$ is $\mathbb{Q}(\chi)$, the degree of any maximal subfield of $M_{n_i}(\Delta_i)$ containing $\mathbb{Q}$ is exactly
\begin{equation}
    d_0 = [M_{n_i}(\Delta_i) : \Delta_i]^{\frac12}[\Delta_i : \mathbb{Q}(\chi)]^{\frac12}[\mathbb{Q}(\chi) : \mathbb{Q}] = n_i m_{\mathbb{Q}}(\chi)[\mathbb{Q}(\chi) : \mathbb{Q}];
\end{equation}
see~\cite[Proposition 1.3]{jsmilne}.

On the other hand, $\dim_{\mathbb{Q}} \mathcal{W}_i = \dim \chi \cdot m_{\mathbb{Q}}(\chi) [\mathbb{Q}(\chi) : \mathbb{Q}] = n_i m_{\mathbb{Q}}(\chi)^2 [\mathbb{Q}(\chi) : \mathbb{Q}]$.
Since $e_i H^1(C, \mathbb{Q})$ is isotypic: it is a direct sum of finitely many $\mathcal{W}_i$'s, $\dim_{\mathbb{Q}} e_iH^1(C, \mathbb{Q})$ is a positive integer multiple of $n_i m_{\mathbb{Q}}(\chi)^2 [\mathbb{Q}(\chi) : \mathbb{Q}]$. Thus,
\begin{equation}
\label{eq:3.10}
    d_0 \le \dim_{\mathbb{Q}} e_iH^1(C, \mathbb{Q}) = 2 \dim A_i
\end{equation}
with equality if and only if $m_{\mathbb{Q}}(\chi) = 1$ (equiv. $\Delta_i = \mathbb{Q}(\chi)$) and the multiplicity of $\mathcal{W}_i$ in $e_i H^1(C, \mathbb{Q})$ is $1$. However, by the Hodge decomposition~\eqref{eq:hodgedecomposition}, the multiplicity of $\mathcal{W}_i$ in $e_i H^1(C, \mathbb{Q})$ is $(\mu_{\chi} + \mu_{\chi^{\ast}}) / m_{\mathbb{Q}}(\chi)$, where the factor of $m_{\mathbb{Q}}(\chi)$ arises due to Proposition~\ref{prop:2.3}. The proof of (i) is complete.

Now assume the equality case of (i) holds. Recall that the action of $G$ on $C$ induces an action of $\mathbb{Q}[G]$ on $\text{Jac}(C)$; Theorem~\ref{thm:isotypicdec} implies $M_{n_i}(\Delta_i)$ is contained in $\text{End}^0(A_i)$. The assumption $\mu_{\chi} + \mu_{\chi^{\ast}} = 1$ forces $\chi$ to be a complex valued character (in case of real-valued characters, $\mu_{\chi} + \mu_{\chi^{\ast}}$ is even), so $\Delta_i = \mathbb{Q}(\chi)$ is a complex abelian extension. If $F_0$ is a totally real extension of $\mathbb{Q}$ of degree $m_i$ linearly disjoint to $\mathbb{Q}(\chi)$, then $A_i$ has CM by the composite $F := F_0 \cdot \mathbb{Q}(\chi)$, as $[F : \mathbb{Q}] = d_0$ and the choice of $F_0$-basis defines an embedding of $F$ into $\text{End}^0(A_i)$. This proves (ii).
\end{proof}
We observe that the following proposition in fact recovers~\cite[Lemma 3.1]{li2018newton} and~\cite[Corollary 3.7]{milescua}: the classification of CM algebras for Jacobians arising from cyclic and abelian Galois covers branched at $3$ points. Indeed, in the abelian situation, the equality case of Proposition~\ref{prop:jacobiancm} (i) always holds for all nonzero isotypic components.

Now, we restrict to the more-subtle case of symplectic characters in $H^1(C, \mathbb{Q}) \otimes_{\mathbb{Q}} \mathbb{C}$. The key idea will be to extend natural the $\mathbb{Q}[G]$-action (cf. Section~\ref{sec:2.3}) on the Jacobian $\text{Jac}(C)$, or some abelian subvariety $A_i$, to a $K[G]$-action, where $K \supset \mathbb{Q}$ is an imaginary quadratic extension. Recall that the Brauer-Speiser theorem~\cite[Corollary 1.8]{Yamada1974} states that any real-valued character $\chi \in \text{Irr}(G)$ satisfies $m_{\mathbb{Q}}(\chi) \in \{1, 2\}$. Thus, if $\chi$ is symplectic, then $\chi$ may not be realized over $\mathbb{Q}(\chi)$, so $m_{\mathbb{Q}}(\chi) = 2$. Furthermore, any extension $K \supset \mathbb{Q}$ in which $m_K(\chi) = 1$ must be nonreal (by $m_{\mathbb{R}}(\chi) = 2$) and of degree at least $2$ (see~\cite[Corollary 10.2]{Isaacs2006}). 

The main theorem we prove is
\begin{theorem}
\label{prop:jacobiancmsymplectic}
Let $R_i = M_{n_i}(\Delta_i)$ be a symplectic Wedderburn component of $\mathbb{Q}[G]$, and let $\chi \in \text{Irr}(G)$ be a character associated to $R_i$. Assume that $K \supset \mathbb{Q}$ is an imaginary quadratic extension such that $m_K(\chi) = 1$. Then,
\begin{enumerate}[nolistsep, label={(\roman{enumi})}]
    \item The natural $\mathbb{Q}[G]$-action on $A_i$ (cf. Section~\ref{sec:2.3}) extends to a $K[G]$-action on $A_i$, and in particular $\text{End}^0(A_i)$ contains $M_{n_i}(\Delta_i) \otimes_{\mathbb{Q}(\chi)} K = M_{2n_i}(K(\chi))$.
\end{enumerate}
Let $d_1$ be the degree of any maximal subfield of $M_{2n_i}(K(\chi))$ over $\mathbb{Q}$. Then,
\begin{enumerate}[nolistsep, label={(\roman{enumi})}]
    \setcounter{enumi}{1}
    \item We have $d_1 \le 2 \dim A_i$, with equality if and only if $\mu_{\chi} = 1$.
    \item If equality holds in (ii), then the isotypic component $A_i$ has complex multiplication by a composite field $F_0 \cdot K(\chi)$, where $F_0 \supset \mathbb{Q}$ is a totally real extension of degree $2n_i$.
\end{enumerate}
\end{theorem}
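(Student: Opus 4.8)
The plan is to mirror the structure of the proof of Proposition~\ref{prop:jacobiancm}, replacing $\mathbb{Q}(\chi)$ throughout by $K(\chi)$ and tracking the effect of the imaginary quadratic extension $K/\mathbb{Q}$ on dimensions and Schur indices. For part (i), I would first record the key algebraic fact: since $\chi$ is symplectic, $m_{\mathbb{Q}}(\chi) = 2$ by Brauer-Speiser, while $m_K(\chi) = 1$ by hypothesis, so $\Delta_i \otimes_{\mathbb{Q}(\chi)} K$ is split and isomorphic to $M_2(K(\chi))$. Hence $R_i \otimes_{\mathbb{Q}(\chi)} K = M_{n_i}(\Delta_i) \otimes_{\mathbb{Q}(\chi)} K \simeq M_{2n_i}(K(\chi))$. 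The geometric input is that the $\mathbb{Q}[G]$-action on $A_i = e_i\,\text{Jac}(C)$ extends to a $K[G]$-action: concretely, $K$ embeds into $\text{End}^0(A_i)$ because $A_i$ is isotypic of ``type $R_i$'' and one can realize the generator of $K$ as a $G$-equivariant endomorphism (this is where I would need an argument analogous to Theorem~\ref{thm:isotypicdec}(ii), exhibiting enough endomorphisms — the excerpt explicitly flags that the omitted proof of that part contains ``similar ideas''). Granting the extension, $\text{End}^0(A_i) \otimes_{\mathbb{Q}} K$ contains $R_i \otimes_{\mathbb{Q}} K \supseteq R_i \otimes_{\mathbb{Q}(\chi)} K = M_{2n_i}(K(\chi))$, and since $K \subseteq \text{End}^0(A_i)$ already, $\text{End}^0(A_i)$ itself contains $M_{2n_i}(K(\chi))$.

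For part (ii), I would compute $d_1$ exactly as in Proposition~\ref{prop:jacobiancm}: a maximal subfield of $M_{2n_i}(K(\chi))$ over its center $K(\chi)$ has degree $2n_i$ (the matrix algebra has trivial Schur index), so $d_1 = 2n_i \cdot [K(\chi):\mathbb{Q}]$. Now $[K(\chi):\mathbb{Q}] = [K(\chi):\mathbb{Q}(\chi)]\cdot[\mathbb{Q}(\chi):\mathbb{Q}]$, and since $\chi$ is symplectic it is real-valued, so $\mathbb{Q}(\chi)$ is totally real and $K(\chi) = K \cdot \mathbb{Q}(\chi)$ is a degree-$2$ (imaginary) extension of $\mathbb{Q}(\chi)$; thus $[K(\chi):\mathbb{Q}] = 2[\mathbb{Q}(\chi):\mathbb{Q}]$ and $d_1 = 4n_i[\mathbb{Q}(\chi):\mathbb{Q}]$. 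On the other side, by Chevalley-Weil and the Hodge decomposition~\eqref{eq:hodgedecomposition}, $\dim_{\mathbb{Q}} e_i H^1(C,\mathbb{Q}) = 2\dim A_i$ is a positive multiple of $\dim_{\mathbb{Q}} \mathcal{W}_i = \dim\chi \cdot m_{\mathbb{Q}}(\chi)[\mathbb{Q}(\chi):\mathbb{Q}] = n_i \cdot 2 \cdot 2 \cdot [\mathbb{Q}(\chi):\mathbb{Q}] = 4n_i[\mathbb{Q}(\chi):\mathbb{Q}]$ (using $\dim\chi = n_i m_{\mathbb{Q}}(\chi) = 2n_i$ and $m_{\mathbb{Q}}(\chi) = 2$), and this multiple is exactly $(\mu_\chi + \mu_{\chi^\ast})/m_{\mathbb{Q}}(\chi) = \mu_\chi$ since $\chi^\ast = \chi$ and $m_{\mathbb{Q}}(\chi) = 2$, which forces $\mu_\chi$ to be even — wait, I must be careful: the multiplicity of $\mathcal{W}_i$ being an integer means $\mu_\chi + \mu_{\chi^\ast} = 2\mu_\chi$ is divisible by $m_{\mathbb{Q}}(\chi) = 2$ automatically, so the multiplicity of $\mathcal{W}_i$ in $e_iH^1$ is $\mu_\chi$. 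Hence $d_1 = 4n_i[\mathbb{Q}(\chi):\mathbb{Q}] \le \mu_\chi \cdot 4n_i[\mathbb{Q}(\chi):\mathbb{Q}] = 2\dim A_i$, with equality iff $\mu_\chi = 1$. For part (iii), assuming $\mu_\chi = 1$: take $F_0/\mathbb{Q}$ totally real of degree $2n_i$ linearly disjoint from $K(\chi)$; then $F := F_0 \cdot K(\chi)$ is a CM-field (imaginary quadratic over the totally real field $F_0 \cdot \mathbb{Q}(\chi)$) of degree $d_1 = 2\dim A_i$ over $\mathbb{Q}$, and choosing an $F_0$-basis inside $M_{2n_i}(K(\chi)) \subseteq \text{End}^0(A_i)$ embeds $F$ into $\text{End}^0(A_i)$, so $A_i$ has CM by $F$.

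The main obstacle is part (i): carefully justifying that the $\mathbb{Q}[G]$-action genuinely extends to a $K[G]$-action on $A_i$, i.e. that $K$ really embeds into $\text{End}^0(A_i)$ compatibly with the $G$-action, and that the resulting copy of $R_i \otimes_{\mathbb{Q}(\chi)} K \simeq M_{2n_i}(K(\chi))$ sits inside $\text{End}^0(A_i)$ rather than merely inside $\text{End}^0(A_i) \otimes_{\mathbb{Q}} K$. I expect this to come down to showing that $A_i$, viewed through its $G$-action, has a ``$\chi$-isotypic analytic representation'' on which the commutant is large enough; concretely one wants that the complex representation $H^0(C,\omega_C)$ restricted to the $R_i$-part is a multiple of the full $K$-irreducible (equivalently, since $m_K(\chi)=1$, already $\mathbb{C}$-irreducible-sum) representation, forcing $\text{End}^0(A_i)$ to contain the larger algebra. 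This is precisely the Schur-index bookkeeping that distinguishes the symplectic case from the complex-valued case, and it is where an argument of the type deferred in the proof of Theorem~\ref{thm:isotypicdec}(ii) — analyzing the analytic and rational representations via the Hodge structure — has to be carried out in full.
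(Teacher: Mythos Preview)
Your treatment of (ii) and (iii) is correct and matches the paper, which simply notes that these ``follow similar steps as that of Proposition~\ref{prop:jacobiancm}''. In particular your computation $d_1 = 4n_i[\mathbb{Q}(\chi):\mathbb{Q}] = \dim_{\mathbb{Q}}\mathcal{W}_i$ and your identification of the multiplicity of $\mathcal{W}_i$ in $e_iH^1(C,\mathbb{Q})$ as $(\mu_\chi + \mu_{\chi^\ast})/m_{\mathbb{Q}}(\chi) = 2\mu_\chi/2 = \mu_\chi$ are exactly what is needed.

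For part (i) you have correctly diagnosed where the work lies but have not supplied the construction. Your proposed route --- first embed $K$ $G$-equivariantly into $\text{End}^0(A_i)$, then observe that $K$ and $R_i$ together generate a copy of the simple algebra $R_i \otimes_{\mathbb{Q}} K \simeq M_{2n_i}(K(\chi))$ --- is sound \emph{once} the embedding is in hand, but you give no mechanism for producing it. An appeal ``analogous to Theorem~\ref{thm:isotypicdec}(ii)'' does not suffice: that result only gives $\Delta_i \hookrightarrow \text{End}^0(B_i)$, and while $K(\chi)$ does sit inside $\Delta_i$ as a maximal subfield (since it splits $\Delta_i$), that copy does not lie in the centralizer of $R_i$, which is what you need.

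The paper's missing ingredient is a concrete module-theoretic identification. Let $V_i$ be the irreducible $K[G]$-module with character $\Sigma(\chi) := \sum_{\sigma \in \text{Gal}(\mathbb{Q}(\chi)/\mathbb{Q})} \sigma\circ\chi$; this exists because $m_K(\chi) = 1$ and, since $\mathbb{Q}(\chi)$ is totally real while $K$ is imaginary quadratic, the $K$-Galois orbit of $\chi$ coincides with its $\mathbb{Q}$-Galois orbit. Viewed via restriction of scalars as a $\mathbb{Q}[G]$-module, $V_i$ has character $2\Sigma(\chi)$ by~\cite[Lemma 9.18]{Isaacs2006} --- precisely the character of $\mathcal{W}_i$. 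Hence $\mathcal{W}_i \cong V_i$ as $\mathbb{Q}[G]$-modules, and this isomorphism transports a $K[G]$-module structure onto $\mathcal{W}_i$ and therefore onto $\Lambda'_{\mathbb{Q}} = e_iH^1(C,\mathbb{Q})$ (a direct sum of copies of $\mathcal{W}_i$). One then checks, using the Chevalley-Weil description of the $\mathbb{C}[G]$-module $e_iH^0(C,\omega_C)$, that this $K$-action is compatible with the Hodge complex structure on $\Lambda' \otimes_{\mathbb{Z}} \mathbb{R}$, so that $K$ genuinely lands in $\text{End}^0(A_i)$ centralizing the image of $G$. This identification $\mathcal{W}_i \cong V_i|_{\mathbb{Q}}$ is the heart of (i), and your sketch --- looking for $K$ abstractly inside the commutant --- does not reach it.
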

\begin{proof} 
The main non-trivial result here is (i). Let $\chi \in \text{Irr}(G)$ be any irreducible character associated to $M_{n_i}(\Delta_i)$. Then, the character of $\mathcal{W}_i$, as a $\mathbb{Q}$-representation, is $m_{\mathbb{Q}}(\chi) \cdot \Sigma(\chi) = 2 \cdot \Sigma(\chi)$, where $\Sigma(\chi) := \sum_{\sigma \in \text{Gal}(\mathbb{Q}(\chi)/\mathbb{Q})} (\sigma \circ \chi)$ is the sum of the $\mathbb{Q}$-Galois conjugates of $\chi$ (cf. Proposition~\ref{prop:2.3}). Note that $K(\chi)/\mathbb{Q}(\chi)$ is an imaginary quadratic extension, so that $[K(\chi) : K] = [\mathbb{Q}(\chi) : \mathbb{Q})]$: the $\mathbb{Q}$-Galois conjugacy class of $\chi$ remains the same upon moving to the larger field $K$.

Recall by definition $\text{End}^0(A_i)$ is the $\mathbb{Q}$-algebra of linear maps of $e_i H^0(C, \omega_C)^{\ast}$ into itself preserving $\Lambda'_{\mathbb{Q}} := e_i H^1(C, \mathbb{Q})^{\ast} \simeq \mathcal{W}_i$, where $\Lambda'$ is the sublattice of $\text{Jac}(C)$ induced by multiplication by $e_i$. Denote by $V_i$ the irreducible $K[G]$-representation whose character is $\Sigma(\chi)$. This is justified as $m_K(\chi) = 1$ and the above facts on the $K$-Galois conjugacy class of $\chi$. By the forgetful functor, we may view $V_i$ as an irreducible $\mathbb{Q}$-representation of degree $[K : \mathbb{Q}] \dim_{\mathbb{Q}(\zeta_4)} V_i$. The character of this representation may be computed:~\cite[Lemma 9.18]{Isaacs2006} implies that the character of $V_i$ as a $\mathbb{Q}$-representation is precisely $2 \cdot \Sigma(\chi)$. Hence, fix an identification $V_i = \mathcal{W}_i$ and thus an identification $\Lambda'_{\mathbb{Q}} = V_i$. This induces a $K[G]$-module structure on $\Lambda'_{\mathbb{Q}}$.

Now consider the natural embedding
\begin{equation}
    \mathcal{W}_i \hookrightarrow V_i \otimes_{\mathbb{Q}} \mathbb{R} \simeq V_i \otimes_{K} \mathbb{C}
\end{equation}
where the representations on the RHS are $\mathbb{C}$-isomorphic. Note that the action of $K$ on $V_i \otimes_{\mathbb{Q}} \mathbb{R}$ preserves the image of $\mathcal{W}_i$ and commutes with that of $G$. By tensoring with $\mathbb{R}$, the identification $\Lambda'_{\mathbb{Q}} = V_i$ induces an identification of real spaces $V_i \otimes_{\mathbb{Q}} \mathbb{R} = \Lambda' \otimes_{\mathbb{Z}} \mathbb{R} \simeq H^0(C, \omega_C)^{\ast}$ where the second isomorphism is by definition of the Jacobian. By Theorem~\ref{thm:cw}, the Chevalley-Weil formula, the complex structures of the above are also preserved. It follows that $A_i$ affords a $K[G]$-action extending the natural $\mathbb{Q}[G]$-action $\rho$ (cf.~\ref{sec:2.3}): denote this action by the map $\rho_K : K[G] \rightarrow \text{End}^0(A_i)$. Now, observe that $e_i$, which we originally defined for $\mathbb{Q}[G]$, is in fact the primitive central idempotent associated to the simple Wedderburn component $M_{n_i}(\Delta_i) \otimes_{\mathbb{Q}(\chi)} K = M_{2n_i}(K(\chi))$ of $K[G]$! Since $A_i = e_i \text{Jac}(C)$, the algebra $M_{2n_i}(K(\chi))$ acts faithfully on $A_i$, proving (i)\footnote{This is the same idea as the proof of Theorem~\ref{thm:isotypicdec} (ii).}.

Now, the proofs of (ii) and (iii) follow similar steps as that of Proposition~\ref{prop:jacobiancm}. Indeed, the field $K(\chi)$ is already CM by definition. Here, the Hodge decomposition~\eqref{eq:hodgedecomposition} implies $\mu_{\chi}$ is precisely the multiplicity of $\mathcal{W}_i$ in $e_i H^1(C, \mathbb{Q})$.
\end{proof}
A few remarks are now in order. Suppose that for a given index $1 \le i \le s$: either $A_i = 0$, or the equality case of Proposition~\ref{prop:jacobiancm} or Theorem~\ref{prop:jacobiancmsymplectic} applies. Thus, for $\chi \in \text{Irr}(G)$ associated to $R_i$
\begin{itemize}[nolistsep]
    \item If $\chi$ is real, then neither of the two equality cases above apply to the index $i$, and so $\mu_{\chi} = 0$.
    \item If $\chi$ is complex-valued and $A_i \neq 0$, then we must be in the equality case of Proposition~\ref{prop:jacobiancm}. Thus, $\mu_{\chi} + \mu_{\chi^{\ast}} \in \{0, 1\}$.
    \item If $\chi$ is symplectic, then we must be in the equality case of Theorem~\ref{prop:jacobiancmsymplectic}. Thus, $\mu_{\chi} \in \{0, 1\}$.
\end{itemize}
This implies that Proposition~\ref{prop:jacobiancm} and Theorem~\ref{prop:jacobiancmsymplectic} together describe a subset of all possible cases in which $N = 0$ cf. Corollary~\ref{corr:3.4}. However, we do not know whether there are cases in which $N = 0$ that are not covered by Proposition~\ref{prop:jacobiancm} or Theorem~\ref{prop:jacobiancmsymplectic}. For instance, it would be interesting to Galois covers arising from groups with Schur indices $m_{\mathbb{Q}}(\chi) > 2$. The smallest finite group $G$ with $\chi \in \text{Irr}(G)$ such that $m_{\mathbb{Q}}(\chi) > 2$ is the non-abelian group $(\mathbb{Z}/7\mathbb{Z}) \rtimes_3 (\mathbb{Z}/9\mathbb{Z})$ of order $63$ (where the action $\mathbb{Z}/9\mathbb{Z} \rightarrow \text{Aut}(\mathbb{Z}/7\mathbb{Z})$ has kernel isomorphic to $\mathbb{Z}/3\mathbb{Z}$); this group has an irreducible character of Schur index $3$. A generalization of this example for arbitrarily large Schur indices is given in~\cite[Example 38.19]{Huppert1998}. We did not find any Galois covers with $N = 0$ arising from such groups with larger Schur indices.

The problem of supplying an imaginary quadratic extension $K \supset \mathbb{Q}$ such that $m_K(\chi) = 1$ as in the situation of Theorem~\ref{prop:jacobiancmsymplectic} is also interesting. Certainly such an extension exists if $\mathbb{Q}(\chi) = \mathbb{Q}$. In the examples we work with in Section~\ref{sec:5}, we observe that the $4$th cyclotomic extension field $K = \mathbb{Q}(\zeta_4)$ always suffices.

\section{Classification of Faithful Metacyclic Covers}
\label{sec:4}

Let $K$ and $H$ be finite cyclic groups. We call a finite group $G$ a \textit{faithful metacyclic group} if $G \simeq K \rtimes H$ is a semidirect product where the underlying action of $H$ on $K$ is faithful, i.e. the map $H \rightarrow \text{Aut}(K)$ is injective. Here we study examples of $G$-Galois covers of $\mathbb{P}^1$ branched at $3$ points where $G$ is a faithful metacyclic group.

Let $G = D_n = \langle a, b \ | \ a^n = b^2 = 1, bab = a^{-1} \rangle$ be the dihedral group of order $2n$, By the Riemann-Hurwitz formula (cf.~\eqref{eq:RiemannHurwitz}), one immediately shows that any $G$-Galois cover with $3$ branching points is trivial. Now, given positive integers $q, n > 1$ such that $q$ is a prime and $n \mid q - 1$, we shall study in Section~\ref{sec:faithfulmetacyclic} the case $G = G_{q, n}$ is a faithful metacyclic groups of order $qn$ defined by
\begin{equation}
\label{eq:4.2}
    G_{q, n} \simeq (\mathbb{Z}/q\mathbb{Z}) \rtimes (\mathbb{Z}/n\mathbb{Z}) \simeq \langle a, b \ | \ a^q = b^n = 1, b^{-1} a b = a^k \rangle
\end{equation}
where $1 < k < q$ is of multiplicative order $n$ modulo $q$ (of course, $n = 2$ recovers the dihedral group $D_q$). We determine the cases giving rise to $N = 0$ special families and the CM-field and type of those cases.
\subsection{Groups of Order \texorpdfstring{$qn$}{Lg} with Faithful Semidirect Product}
\label{sec:faithfulmetacyclic}

Let $G := G_{q, n}$ be the metacyclic group defined as above. We begin by considering the possible local monodromy of these $G$-Galois covers:
\begin{proposition}
\label{prop:monodromyfaithfulmetacyclic}
Let $C \rightarrow \mathbb{P}^1$ be a $G$-Galois cover, branched at $3$ points, with monodromy datum $(G, \bm{x})$. Then, the local monodromy $\bm{m}$ is either $(q, n, n)$, or it is a $3$-tuple $\bm{m} = (n_1, n_2, n_3)$ where each $n_i$ divides $n$ which may be viewed as the local monodromy of a cyclic cover.
\end{proposition}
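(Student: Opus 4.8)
The plan is to analyze the structure of a spherical system of generators $\bm{x} = (x_1, x_2, x_3)$ of $G = G_{q,n}$ by looking at the canonical projection $\pi : G \to G/\langle a \rangle \simeq \mathbb{Z}/n\mathbb{Z}$, whose kernel is the (unique) Sylow $q$-subgroup $Q = \langle a \rangle$, normal in $G$ since $q$ is the largest prime dividing $|G| = qn$ and $n \mid q - 1$. Every element of $G$ lies either in $Q$ (and has order $1$ or $q$) or outside $Q$; an element outside $Q$ projects to a nonzero element of $\mathbb{Z}/n\mathbb{Z}$, and one checks via the relation $b^{-1}ab = a^k$ that any such element is conjugate in $G$ to a power of $b$, hence has order dividing $n$ — in fact exactly equal to the order of its image under $\pi$, because $Q$ has prime order $q$ coprime to $n$ so no extra $q$-part can appear. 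So each $x_i$ has order either $q$ or a divisor of $n$.

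Next I would use the two constraints defining an SSG: $x_1 x_2 x_3 = 1$ and $\langle x_1, x_2, x_3\rangle = G$. Applying $\pi$ gives $\pi(x_1)\pi(x_2)\pi(x_3) = 1$ in $\mathbb{Z}/n\mathbb{Z}$ with the three images generating $\mathbb{Z}/n\mathbb{Z}$. For $G$ itself to be generated, at least one $x_i$ must lie outside $Q$ (otherwise the subgroup generated is contained in $Q \neq G$), so at least one $x_i$ has order dividing $n$; and similarly, since $\mathbb{Z}/n\mathbb{Z}$ must be generated, not all three can lie in $Q$. The key case division is on how many of the $x_i$ lie in $Q$. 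If exactly one, say $x_1 \in Q$ (so $x_1 \neq 1$ forces $\mathrm{ord}(x_1) = q$, giving a $q$ in the local monodromy), then $x_2, x_3 \notin Q$ with $\pi(x_2)\pi(x_3) = 1$, so $\pi(x_3) = \pi(x_2)^{-1}$; generation of $\mathbb{Z}/n\mathbb{Z}$ then forces $\pi(x_2)$ to generate, so $x_2, x_3$ both have order exactly $n$, yielding $\bm{m} = (q, n, n)$ up to permutation. If none of the $x_i$ lies in $Q$, then all three have orders dividing $n$, and since the normal subgroup they generate must be all of $G \supsetneq Q$ but their images generate the abelian quotient, one shows the cover factors through — equivalently, $G$ is generated by elements all of whose orders divide $n$ together with the constraint; here the datum is "really" abelian in the sense that one can exhibit it as pulled back from a cyclic $\mathbb{Z}/n\mathbb{Z}$-cover, giving the local monodromy $(n_1, n_2, n_3)$ with each $n_i \mid n$. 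The case of exactly two $x_i$ in $Q$ must be ruled out: if $x_1, x_2 \in Q$ then $x_3 = (x_1 x_2)^{-1} \in Q$ as well, contradiction.

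The main obstacle, and the step deserving the most care, is the "none in $Q$" case: I must justify precisely in what sense this is "the local monodromy of a cyclic cover," i.e. that although the abstract generators $x_i$ live in the nonabelian $G$, the resulting family in $\mathsf{M}_g$ coincides with one arising from an abelian (cyclic) monodromy datum — or at least that only the orders $n_i \mid n$ can occur. Concretely, I would argue that if all $x_i \notin Q$ then each $x_i$ is $G$-conjugate to an element of a fixed cyclic complement $H = \langle b \rangle \simeq \mathbb{Z}/n\mathbb{Z}$; after absorbing these conjugations (using the $\mathrm{Aut}(G) \times \mathbf{B}_3$-action and Hurwitz moves, or a direct normal-form computation), the subgroup generated collapses into a conjugate of $H$ unless the commutators $[x_i, x_j]$ force elements of $Q$ to appear — but a short calculation with $b^{-1}ab = a^k$ shows that a product of elements of $\bigcup_g gHg^{-1}$ summing to $1$ either stays inside a single conjugate of $H$ or is not what we want; I expect that handling this cleanly (and deciding whether to phrase the conclusion as "Hurwitz-equivalent to an abelian datum" versus merely "orders divide $n$") is where the real work lies, whereas the $(q,n,n)$ case is essentially immediate from the order computation and the generation constraints. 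Since the paper explicitly flags the second case as "not interesting," I would keep that branch brief and focus the write-up on the order bookkeeping that produces $(q, n, n)$.
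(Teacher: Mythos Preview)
Your approach is essentially the same as the paper's: split on whether some $x_i$ lies in $Q = \langle a\rangle$, use generation to force the remaining orders to be exactly $n$ in the first case, and project via $\pi : G \to G/\langle a\rangle \simeq \mathbb{Z}/n\mathbb{Z}$ in the second.

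You are over-engineering the ``none in $Q$'' branch. The phrase ``may be viewed as the local monodromy of a cyclic cover'' in the statement means only that the projected triple $\bm{y} = (\pi(x_1),\pi(x_2),\pi(x_3))$ is itself an SSG of $\mathbb{Z}/n\mathbb{Z}$ with the same local monodromy as $\bm{x}$; the paper's proof is one sentence to this effect. There is no claim that the $G$-cover $C\to\mathbb{P}^1$ is Hurwitz-equivalent to an abelian cover, and no need for the Hurwitz-move / normal-form computation you sketch. Your earlier observation (that the order of any $x\notin Q$ equals the order of $\pi(x)$, since $(a^jb^\nu)^m = a^{j(1+k^{-\nu}+\cdots+k^{-(m-1)\nu})}$ with the geometric sum vanishing mod $q$ when $m$ is the order of $\nu$ in $\mathbb{Z}/n\mathbb{Z}$) already gives everything you need: each $m_i \mid n$, and the projected triple is automatically a valid SSG because $\pi$ is a surjective homomorphism and no $x_i$ lies in $\ker\pi$.
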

\begin{proof}
If one element $x_i$ in $\bm{x}$ is order $q$, that is, $x_i \in \langle a \rangle \setminus \{1\}$, then it is necessary that any other element $x_j$ is order $n$ (equiv. $x_j \in b^{\nu}\langle a \rangle$ with $\gcd(\nu, n) = 1$), in order for $x_i, x_j$ to generate $G$. Thus, assume that the local monodromy $\bm{m}$ is some permutation of $(q, n, n)$. Note: by an appropriate $\mathbf{B}_3 \times \text{Aut}(G)$-action we may assume $x_1 = a$ (and so $m_1 = q$). 

Otherwise, each $x_i$ is not contained in $\langle a \rangle$ (of course, this means $n \ge 3$). Considering $\bm{x}$ as a tuple of elements in $G/\langle a \rangle \simeq \mathbb{Z}/n\mathbb{Z}$ gives rise to a cyclic monodromy datum $(\mathbb{Z}/n\mathbb{Z}, \bm{y})$. The local monodromy of this cyclic cover is clearly a $3$-tuple of positive integers dividing $n$.
\end{proof}
\begin{remark}
In the $n = 2, 3, 4$ cases, there is a unique Hurwitz equivalence class where $\bm{m} = (q, n, n)$ or any permutation of these numbers.
\end{remark}

Carocca et al. proves via genus considerations that there exists a $G$-monodromy datum $\bm{x}$ (with $r \ge 3$ branching points) giving rise to a Jacobian $\text{Jac}(C)$ with complex multiplication by $\mathbb{Q}(\zeta_q)$ if and only if there are $r = 3$ branching points and $n = 3$~\cite[Proposition 3.1, Proposition 3.4]{Carocca2011}. The data here coincides with the local monodromy $\bm{m} = (q, n, n)$, i.e. the first case of Proposition~\ref{prop:monodromyfaithfulmetacyclic}. Using the results of Section~\ref{sec:Jacobians}, we present an alternative approach for the case of $3$ branching points, including a complete classification of the data $(G, \bm{x})$ giving rise to a special point with $N = 0$. This generalizes and extends the methods of~\cite{Carocca2011}.

In the case where the $G$-Galois cover $C \rightarrow \mathbb{P}^1$ has local monodromy $\bm{m}$ corresponding to a cyclic cover, the classification of special points and CM Jacobians is resolved by uninteresting and elementary computations and casework. As such, we do not further discuss this case in the paper.

\subsubsection*{Representation Theory of $G_{q, n}$ over $\mathbb{C}$ and $\mathbb{Q}$}

We briefly review the representation theory of $G$ over $\mathbb{C}$ and $\mathbb{Q}$, also introducing useful notation to be used later. First recall there are $n  + \frac{q - 1}{n}$ conjugacy classes of $G$, as follows. Indeed, for each $1 \le \nu < n$, the coset $b^\nu \langle a \rangle$ is a conjugacy class of $G$. There are $s := \frac{q - 1}{n}$ size $n$ conjugacy classes of $G$ contained in $\langle a \rangle$, indexed by the orbits of the multiplication by $k$ map in $(\mathbb{Z}/q\mathbb{Z})^{\ast}$.

Recall $G' = \langle a \rangle$, so $G/G' \simeq \mathbb{Z}/n\mathbb{Z}$. Thus, $G$ has $n$ complex linear characters $\psi_0, \dots, \psi_{n - 1}$ lifted from those of $\mathbb{Z}/n\mathbb{Z}$; assume $\psi_0$ is the trivial character. The family of $s$ degree-$n$ complex irreducible representations of $G$ are obtained as follows. Suppose $t_1, \dots, t_s$ is a set of representatives for the $k$-orbits in $(\mathbb{Z}/q\mathbb{Z})^{\ast}$, so the orbit corresponding to $t_i$ is precisely $\{t_i, kt_i, \dots, k^{n - 1}t_i\}$. Let $\theta_1, \dots, \theta_s$ be the linear representations of $\mathbb{Z}/p\mathbb{Z} \simeq \langle a \rangle$ corresponding to the map $1 \mapsto \zeta_q^{t_i}$. Then, the induced representations $\rho_i = \text{Ind}_{\langle a \rangle}^G(\theta_i), \ 1 \le i \le s$ are degree-$n$, irreducible (e.g. by~\cite[Proposition 8.25]{Serre1977}), and inequivalent. It is straightforward to construct a matrix representation of the $\rho_i$:
\begin{equation}
\label{eq:4.4}
    \rho_i : a \mapsto 
    \begin{bmatrix}
    \zeta_q^{t_i} & 0 & \cdots & 0 \\
    0 & \zeta_q^{kt_i} & \cdots & 0 \\
    \vdots & \vdots & \ddots & \vdots \\
    0 & 0 & \cdots & \zeta_q^{k^{n - 1}t_i}
    \end{bmatrix}, \quad b \mapsto
    \begin{bmatrix}
    0 & 0 & 0 & \cdots & 0 & 1 \\
    1 & 0 & 0 & \cdots & 0 & 0 \\
    0 & 1 & 0 & \cdots & 0 & 0 \\
    \vdots & \vdots & \vdots & \ddots & \vdots & \vdots \\
    0 & 0 & 0 & \cdots & 0 & 0 \\
    0 & 0 & 0 & \cdots & 1 & 0
    \end{bmatrix}.
\end{equation}
This is useful for determining the eigenvalues of the $\rho_i$.

Denote by $\chi_1, \dots, \chi_s$ the respective characters of $\rho_1, \dots, \rho_s$, and put $\chi := \sum_{i = 1}^s \chi_i$. It is well-known that the Schur index of every irreducible character of $G$ over any subfield $F \subseteq \mathbb{C}$ is $1$. For the induced degree-$n$ characters, one may appeal to~\cite[Lemma 10.8]{Isaacs2006} for an elementary proof. Let $\mathbb{Q}(\zeta_q^{(n)})$ be the unique subfield of $\mathbb{Q}(\zeta_q)$ of degree $n$, i.e. $\mathbb{Q}(\zeta_q^{(n)})$ generated by $\mathbb{Q}$ and $\sum_{\nu = 0}^{n - 1} \zeta_q^{k^\nu t_1}$. Note $\mathbb{Q}(\zeta_q^{(n)}) = \mathbb{Q}(\chi_i)$ for each $i$. Then, by the exposition in Section~\ref{sec:representationtheory}, the group algebra decomposition is
\begin{equation}
    \mathbb{Q}[G] \simeq \left( \prod_{d \mid n} \mathbb{Q}(\zeta_n) \right) \times M_n(\mathbb{Q}(\zeta_q^{(n)})).
\end{equation}
In particular, there is a irreducible $\mathbb{Q}$-representation of $G$ corresponding to $M_n(\mathbb{Q}(\zeta_q^{(n)}))$ whose character is $\chi$; We denote this representation by $\mathcal{W}$. The other irreducible $\mathbb{Q}$-representations of $G$ lift from those of $\mathbb{Z}/n\mathbb{Z}$.

\subsubsection*{Local monodromy $\bm{m} = (q, n, n)$}

Let $G := G_{q, n}$ as defined above in Section~\ref{sec:faithfulmetacyclic}. Throughout this section, we fix a $G$-Galois cover $C \rightarrow \mathbb{P}^1$ of genus $g$ with monodromy datum $(G, \bm{x})$ such that $\bm{m} = (q, n, n)$ and $x_1 = a$. This is the first case of Proposition~\ref{prop:monodromyfaithfulmetacyclic}. The Hurwitz equivalence class of $(G, \bm{x})$ is not uniquely determined, but the results in this section hold over all possible Hurwitz equivalence classes with these properties. We make the following classification:
\begin{proposition}[Theorem~\ref{thm:1.1}] 
\label{prop:4.2}
Let $C \rightarrow \mathbb{P}^1$ be the $G$-Galois cover with local monodromy $\bm{m} = (q, n, n)$ as defined above. Then, $H^0(C, \omega_C)$ may be computed with the Chevalley-Weil formula. As a consequence,
\begin{enumerate}[nolistsep, label={(\roman{enumi})}]
    \item The $\mathbb{Q}[G]$-module $H^1(C, \mathbb{Q})$ is isomorphic to the direct sum of $n - 2$ copies of $\mathcal{W}$ (the rational representation of $G$ with character $\chi$).
    \item The cover $C \rightarrow \mathbb{P}^1$ gives rise to a special point with $N = 0$ if and only if $n = 2, 3$; the $n = 2$ case is trivial.
\end{enumerate}
\end{proposition}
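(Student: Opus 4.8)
The plan is to compute the multiplicities $\mu_\chi$ for all $\chi \in \text{Irr}(G)$ via the Chevalley-Weil formula (Theorem~\ref{thm:cw}), then read off the structure of $H^1(C,\mathbb{Q})$ and apply Corollary~\ref{corr:3.4}. First I would fix the SSG so that $x_1 = a$ has order $q$ and $x_2, x_3$ have order $n$, lying in cosets $b^{\nu}\langle a\rangle$ and $b^{-\nu}\langle a\rangle$ (since $x_1x_2x_3 = 1$ forces $x_2, x_3$ into opposite cosets modulo $\langle a\rangle$). For the linear characters $\psi_j$ of $G$ (lifted from $\mathbb{Z}/n\mathbb{Z}$): since $x_1 = a \in G'$ acts trivially, $\rho_{\psi_j}(x_1) = 1$, contributing $\alpha = 0$ only, while $\rho_{\psi_j}(x_2), \rho_{\psi_j}(x_3)$ are roots of unity $\zeta_n^{j\nu}, \zeta_n^{-j\nu}$; plugging into the Chevalley-Weil sum gives $\mu_{\psi_j} = -1 + \langle 0 \rangle + \langle -j\nu/n\rangle + \langle j\nu/n\rangle + \epsilon$. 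For $j \neq 0$ this is $-1 + 0 + 1 = 0$ (using $\langle x \rangle + \langle -x \rangle = 1$ when $x \notin \mathbb{Z}$), and for $j = 0$ it is $-1 + 0 + \epsilon = 0$. So \emph{no} linear character appears in $H^0(C,\omega_C)$.

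Next I would handle the degree-$n$ characters $\chi_i$. Using the matrix form~\eqref{eq:4.4}: $\rho_i(a)$ has eigenvalues $\zeta_q^{k^\ell t_i}$ for $\ell = 0, \dots, n-1$, each appearing once, so over the first branch point the contribution is $\sum_{\alpha} E_{1,\alpha}\langle -\alpha/q\rangle = \sum_{\ell} \langle -k^\ell t_i / q\rangle$. Since the $k^\ell t_i$ range over a full $k$-orbit in $(\mathbb{Z}/q\mathbb{Z})^*$, and one checks (a standard fact about the fractional parts over a subgroup coset) that this sum equals exactly $(n-1)/2 \cdot$ something — more precisely I expect $\sum_{\ell=0}^{n-1}\langle -k^\ell t_i/q\rangle + \sum_{\ell=0}^{n-1}\langle k^\ell t_i/q\rangle = n$, and by the symmetry $k^{\ell} \leftrightarrow -k^{\ell}$ when $-1$ is a power of $k$ (and more care otherwise) this sum is $n/2$ when $n$ is even or requires a parity argument when $n$ is odd. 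For the order-$n$ elements $x_2 = b^\nu a^c$ (some $c$): the matrix $\rho_i(b^\nu a^c)$ is a monomial matrix whose $n$-th power is scalar; its eigenvalues are the $n$-th roots of $\det$-type quantities, and one finds they are $\omega \cdot \zeta_n^{0}, \dots, \omega\cdot\zeta_n^{n-1}$ for a suitable $\omega$, i.e. each primitive and non-primitive $n$-th root of the relevant scalar appears exactly once. The Chevalley-Weil contribution from $x_2$ (and similarly $x_3$) then telescopes to roughly $(n-1)/2$. Summing, I expect $\mu_{\chi_i} = -n + [\text{contribution} \approx (n-1)] = $ something that works out to $n - 2$ for each $i$ (independent of $i$, consistent with the $\mathbb{Q}$-rationality, since all $\chi_i$ are Galois-conjugate). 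Since $\chi_i$ is self-dual or its dual is another $\chi_{i'}$, and $\iota_{\chi_i}$ is orthogonal (the paper notes all Schur indices are $1$, and these are induced representations; their Frobenius-Schur indicator is $1$ iff $-1$ lies in the $k$-orbit structure appropriately, $0$ otherwise), I need to determine whether the $\chi_i$ are orthogonal or complex-valued.

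Then part (i) follows: $H^1(C,\mathbb{Q}) \cong H^0(C,\omega_C) \oplus \overline{H^0(C,\omega_C)}$ has, at the level of $\mathbb{C}[G]$-modules, multiplicity $\mu_{\chi_i} + \mu_{\chi_i^*} = 2(n-2)$ of each $\chi_i$ — wait, since no linear characters appear and each $\chi_i$ appears with multiplicity $n-2$ in $H^0$, and the $\chi_i$ are permuted by duality, the total is $n-2$ copies of $\bigoplus_i \chi_i = \chi$, hence $n-2$ copies of $\mathcal{W}$ (using $m_{\mathbb{Q}}(\chi_i) = 1$ and Proposition~\ref{prop:2.3}, so $\mathcal{W}$ as a $\mathbb{C}[G]$-module is exactly $\bigoplus_i \chi_i$). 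This matches the Riemann-Hurwitz genus count $g = \dim \mathcal{W} \cdot (n-2)/2 = n(n-2)(q-1)/(2)\cdot(1/n)\cdot\ldots$ — I would verify dimensions agree as a sanity check. For part (ii): if $n = 2$, then $n - 2 = 0$, so $H^1(C,\mathbb{Q}) = 0$, $g = 0$, the cover is trivial, and $N = 0$ vacuously. If $n = 3$, each $\chi_i$ appears with multiplicity $1$ in $H^0(C,\omega_C)$; I must check via Corollary~\ref{corr:3.4} that condition (i)–(iii) hold — the decisive point is whether the $\chi_i$ (of multiplicity $1$) are orthogonal (would violate (i)) or complex-valued (then need $\mu_{\chi_i}\mu_{\chi_i^*} = 0$, i.e. $\chi_i^* \neq \chi_i$ and they don't both appear). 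For $n = 3$, $-1$ has order $2 \nmid 3$, so $-1$ is not a power of $k$, the orbit $\{t_i, kt_i, k^2t_i\}$ is disjoint from $\{-t_i, -kt_i, -k^2t_i\}$, hence $\chi_i^* = \overline{\chi_i}$ is a \emph{different} character $\chi_{i'}$ — so the $\chi_i$ are complex-valued and pair up under duality. But then both $\chi_i$ and $\chi_i^*$ appear in $H^0(C,\omega_C)$ with multiplicity $1$, which would \emph{violate} condition (ii)! So the resolution must be that actually the Chevalley-Weil count gives $\mu_{\chi_i} = 1$ but $\mu_{\chi_i^*} = 0$, i.e. $H^0(C,\omega_C)$ picks out exactly one of each conjugate pair. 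This is the crux: I expect the asymmetry of $\langle -\alpha/m_i\rangle$ versus $\langle \alpha/m_i\rangle$ in the Chevalley-Weil formula to break the symmetry between $\chi_i$ and $\chi_i^*$ precisely when $n = 3$, and the hardest part will be tracking this sign/conjugation bookkeeping carefully — showing that for $n = 3$ the formula yields $\mu_{\chi_i} \in \{0,1\}$ with exactly one of each dual pair nonzero, while for $n \geq 4$ the multiplicity $n - 2 \geq 2$ forces either an orthogonal character of positive multiplicity or a complex-valued pair both of multiplicity $\geq 2$ — either way violating Corollary~\ref{corr:3.4} — and for $n = 4$ specifically checking whether $\chi_i$ becomes symplectic or orthogonal ($-1 = k^2$ when $n=4$, so $\chi_i$ is self-dual, real-valued; being induced it should be orthogonal) so $\mu_{\chi_i} = 2 > 1$ violates condition (i). Thus $N = 0 \iff n \in \{2,3\}$.
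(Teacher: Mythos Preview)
Your overall structure is right---compute the multiplicities via Chevalley--Weil, check the linear characters vanish, then invoke Corollary~\ref{corr:3.4}---and your handling of the linear characters is correct. But there are two problems, one computational and one a genuine gap.

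First, the multiplicity $\mu_{\chi_i}$ is \emph{not} $n-2$. The contribution from the branch point $x_1 = a$ is $\sum_{\ell=0}^{n-1}\langle -k^\ell t_i/q\rangle = n - \tfrac{1}{q}S_{l_i}$, where $S_{l_i}$ is the sum of the residues in the $k$-orbit of $l_i$ modulo $q$; the contributions from $x_2,x_3$ are each $\sum_{j=1}^{n-1}\langle -j/n\rangle = (n-1)/2$. So $\mu_{\chi_i} = n - 1 - \tfrac{1}{q}S_{l_i}$. When $n$ is even, $-1 = k^{n/2}$ forces $S_{l_i} = \tfrac{n}{2}q$ for every $i$, giving $\mu_{\chi_i} = \tfrac{n}{2}-1$ (not $n-2$); these $\chi_i$ are real-valued orthogonal, so Corollary~\ref{corr:3.4}(i) fails for any even $n \ge 4$. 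When $n$ is odd, the $\chi_i$ come in complex-conjugate pairs and one only has $\mu_{\chi_i} + \mu_{\chi_i^\ast} = n-2$; the individual values depend on the orbit sums $S_{l_i}$. This is enough for part~(i), since $H^1(C,\mathbb{Q})$ has character $\sum_i(\mu_{\chi_i}+\mu_{\chi_i^\ast})\chi_i = (n-2)\chi$, but it is not enough for part~(ii).

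Second---and this is the real gap---for odd $n \ge 5$ your claim that ``multiplicity $n-2 \ge 2$ forces a complex-valued pair both of multiplicity $\ge 2$'' is invalid once you only know $\mu_{\chi_i} + \mu_{\chi_i^\ast} = n-2$. Nothing you have written rules out the split $(n-2,0)$ for every dual pair, which would satisfy Corollary~\ref{corr:3.4}(ii) and give $N=0$. The paper closes this with Lemma~\ref{basiclemma4.3}(iii): if every $k$-orbit had $S_l \in \{q,(n-1)q\}$, then the $\tfrac{s}{2}$ disjoint orbits with $S_{l}=q$ would have to fit inside $\{1,\dots,q-1\}$ with total sum $\tfrac{q(q-1)}{2n}$, but $1+2+\cdots+\tfrac{q-1}{2} = \tfrac{(q-1)(q+1)}{8}$ already exceeds this once $n>4$. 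Hence some orbit has $q < S_l, S_{q-l} < (n-1)q$, so both $\mu_{\chi_i},\mu_{\chi_i^\ast}>0$ and $N>0$. For $n=3$ the companion Lemma~\ref{basiclemma4.3}(ii) says $S_l \in \{q,2q\}$ always, giving $\mu_{\chi_i}\in\{0,1\}$ with exactly one of each dual pair nonzero---this is the asymmetry you were looking for, but it requires the orbit-sum lemma, not just sign bookkeeping in Chevalley--Weil.
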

To apply the Chevalley-Weil formula, we need some elementary preliminaries (generalizing~\cite[Lemma 3.6]{Carocca2011}). For any integer $t$ let $[t]$ be the unique integer with $0 \le [t] \le q - 1, [t] \equiv t \pmod q$. For any integer $l, \ 1 \le l \le q - 1$ we define the set $O_l := \{l, [kl], \dots, [k^{n - 1} l] \}$, and denote by $S_l$ the sum of all elements in $O_l$. There exist a choice of $s$ representative $O_l$'s (and thus $l$'s) that form a partition of $\{1, \dots, q - 1\}$. We may number the representatives $l_1, \dots, l_s$.
\begin{lemma}
\label{basiclemma4.3}
The sum $S_l = l + [kl] + \dots + [k^{n - 1}l]$ (as defined above) is a multiple of $q$.
\begin{enumerate}[nolistsep, label={(\roman{enumi})}]
    \item If $n$ is even, then $S_l = \frac{n}{2} q$.
    \item \cite[Lemma 3.6]{Carocca2011} If $n = 3$, then $S_l$ is either $q$ or $2q$, taking on $q$ for exactly $\frac{q - 1}{2}$ values of $l$.
    \item If $n > 3$ is odd, then there exists $l$ such that $S_l, S_{q - l} > q$.
\end{enumerate}
\end{lemma}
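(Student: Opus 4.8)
The statement concerns the sums $S_l = l + [kl] + \cdots + [k^{n-1}l]$ where $k$ has multiplicative order $n$ modulo $q$. The first observation, common to all three parts, is that $S_l \equiv l(1 + k + \cdots + k^{n-1}) \pmod q$, and since $k^n \equiv 1$ but $k \not\equiv 1$, the factor $1 + k + \cdots + k^{n-1} = \frac{k^n - 1}{k - 1} \equiv 0 \pmod q$. Hence $S_l$ is always a multiple of $q$; since each $[k^j l]$ lies in $\{1, \dots, q-1\}$, we have $q \le S_l \le n(q-1) < nq$, so $S_l \in \{q, 2q, \dots, (n-1)q\}$. The rest is pinning down which multiple occurs.

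\textbf{Part (i), $n$ even.} The key is the involution $j \mapsto j + n/2$ on exponents: since $k$ has order $n$, the element $k^{n/2}$ has order $2$ in $(\mathbb{Z}/q\mathbb{Z})^\ast$, and the only such element is $-1$, so $k^{n/2} \equiv -1 \pmod q$. Therefore $[k^{j + n/2} l] = [-k^j l] = q - [k^j l]$ (using $[k^j l] \neq 0$). Pairing the $n$ terms of $S_l$ into $n/2$ pairs $\{[k^j l], [k^{j+n/2}l]\}$, each pair sums to exactly $q$, giving $S_l = \frac{n}{2} q$.

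\textbf{Part (iii), $n > 3$ odd.} Here I want to exhibit some $l$ with both $S_l > q$ and $S_{q-l} > q$. Note $S_l + S_{q-l} = \sum_j \big([k^j l] + [k^j(q-l)]\big) = \sum_j \big([k^j l] + (q - [k^j l])\big) = nq$, so one of $S_l, S_{q-l}$ is $> q$ unless both equal $\frac{nq}{2}$, which is impossible for $n$ odd; more to the point, $S_l > q$ fails only if $S_l = q$, and then $S_{q-l} = (n-1)q$, which is the extreme — it would force $[k^j(q-l)] = q-1$ for all $j$, i.e. $[k^j l] = 1$ for all $j$, i.e. $l, kl, \dots, k^{n-1}l$ are all $\equiv 1 \pmod q$; since $k \not\equiv 1$ this is absurd for $n \ge 2$. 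So actually $S_l = q$ is impossible when $n \ge 2$... wait, that over-proves. Let me instead argue directly: I will show the number of $l \in \{1,\dots,q-1\}$ with $S_l = q$ is small — each such $l$ corresponds to a choice of $(\varepsilon_0, \dots, \varepsilon_{n-1}) \in \{0,1\}^n$ with $[k^j l] = \varepsilon_j \cdot (\text{something})$... The cleanest route: $S_l = q$ means the "carries" in summing $l(1 + k + \cdots + k^{n-1})$ over $\mathbb{Z}$ versus $\bmod\ q$ contribute exactly one $q$; bounding the count of such $l$ by a direct counting argument (the map $l \mapsto (\lfloor k^j l / q\rfloor)_j$) shows that for $n > 3$ not every $l$ can satisfy $S_l = q$ or $S_{q-l} = q$, so some $l$ has both strictly exceeding $q$. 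I expect \textbf{this part to be the main obstacle}: one needs a robust lower bound on $\#\{l : S_l > q\}$ or equivalently an upper bound on $\#\{l : S_l = q\}$, and the natural approach is to show $\sum_{l=1}^{q-1} S_l = \frac{n(q-1)q}{2}$ (average value $\frac{nq}{2}$) via a symmetry/pairing argument, then combine with $S_l + S_{q-l} = nq$ and a pigeonhole/counting estimate specific to $n \ge 4$.

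\textbf{Part (ii), $n = 3$.} This is cited from Carocca et al., but I would sketch it for completeness: $S_l \in \{q, 2q\}$ by the range bound, and $S_l + S_{q-l} = 3q$ forces exactly one of $S_l, S_{q-l}$ to equal $q$. Since $l \mapsto q - l$ is a fixed-point-free involution on $\{1, \dots, q-1\}$ (as $q$ is odd), pairing up elements shows $S_l = q$ for exactly half of them, i.e. for $\frac{q-1}{2}$ values of $l$. I would present (i) and (iii) in full and (ii) with this short argument plus the citation.
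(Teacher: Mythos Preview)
Your treatment of the opening divisibility claim and of part (i) is essentially identical to the paper's, and your argument for (ii) is a correct short proof (the paper simply cites Carocca et al.\ here, so you have in fact supplied more).

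The gap is in part (iii). You correctly derive $S_l + S_{q-l} = nq$, but then your argument stalls: the first attempt (that $S_{q-l}=(n-1)q$ would force every summand to equal $q-1$) is wrong, as you yourself notice; the ``carries'' idea is never made precise; and the closing plan based on the average $\sum_l S_l = \tfrac{n(q-1)q}{2}$ cannot work by itself, since a configuration in which exactly half of the $S_l$ equal $q$ and the other half equal $(n-1)q$ already has that average. Some further structural input is needed, and you have not supplied it.

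The paper's missing idea is a size-versus-sum inequality on the orbits. If the conclusion fails, then in every conjugate pair $\{O_l, O_{q-l}\}$ at least one orbit has sum $q$; since $n$ is odd these pairs are genuinely distinct, so one may choose $\tfrac{s}{2}=\tfrac{q-1}{2n}$ pairwise disjoint orbits in $\{1,\dots,q-1\}$, each of size $n$ and each summing to $q$. Their union then has $\tfrac{q-1}{2}$ elements and total sum $\tfrac{q(q-1)}{2n}$. But any $\tfrac{q-1}{2}$ distinct elements of $\{1,\dots,q-1\}$ sum to at least $1+2+\cdots+\tfrac{q-1}{2}=\tfrac{(q-1)(q+1)}{8}$, and the inequality $\tfrac{q(q-1)}{2n}\ge \tfrac{(q-1)(q+1)}{8}$ forces $n<4$, a contradiction. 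This elementary lower bound on the sum of the smallest $\tfrac{q-1}{2}$ integers is the step your proposal is missing.
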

\begin{proof}
Note $S_l \equiv l(1 + k + \dots + k^{n - 1}) \equiv l\frac{k^n - 1}{k - 1} \equiv 0 \pmod q$.

(i) If $n$ is even, then for any $0 \le j < \frac{n}{2}$, we have $[k^j l] + [k^{j + \frac{n}{2}} l] = [k^j l] + [-k^j l] = q$. There are $\frac{n}{2}$ such pairs $j, j + \frac{n}{2}$, implying the conclusion.

(ii) This is simply~\cite[Lemma 3.6]{Carocca2011}.

(iii) Since $[kl] + [k(q - l)] = q$, we have $S_l + S_{q - l} = nq$. Assuming on the contrary, we must have $\frac{s}{2}$ mutually disjoint subsets $O_{l_1}, \dots, O_{l_{\frac{s}{2}}} \subseteq \{1, \dots, q - 1\}$ such that $S_{l_i} = q$ (relabeling if necessary) for $1 \le i \le \frac{s}{2}$. Then, we have the bound
\begin{equation}
    \frac{q(q - 1)}{2n} = \sum_{i = 1}^{\frac{s}{2}} S_{l_i} \ge 1 + 2 + \dots + \frac{q - 1}{2} = \frac{(q + 1)(q - 1)}{8}
\end{equation}
and the RHS clearly exceeds the LHS if $n > 4$.
\end{proof}
Suppose $n$ is odd; then, the $l_i$'s may be partitioned into pairs where the sum of each pair is $q$. Thus, in what follows we assume the $l_i$'s are ordered such that $l_i + l_{i + \frac{s}{2}} = q$ for $1 \le i \le \frac{s}{2}$. Furthermore, take $t_i \equiv l_i \pmod q$ for all $i$. Thus, $\chi_i^{\ast} = \chi_{i + \frac{s}{2}}$ for $1 \le i \le \frac{s}{2}$.

We are now ready to use Lemma~\ref{basiclemma4.3} to prove Proposition~\ref{prop:4.2}.
\begin{proof}[Proof of Proposition~\ref{prop:4.2}]
By the Riemann-Hurwitz formula, the genus of $C$ is
\begin{equation}
\label{eq:4.7}
    g =  1 - \# G + \frac12 \left( \frac{\# G}{n}(q - 1) + \frac{\# G}{2}(n - 1) + \frac{\# G}{2}(n - 1) \right) = \frac{(n - 2)(q - 1)}{2}.
\end{equation}
Recall that the eigenvalues of the permutation matrix in~\eqref{eq:4.4} are $n$th roots of unity. 

Suppose $n$ is even. Then, the Chevalley-Weil formula yields for all $i$
\begin{equation}
    \mu_{\chi_i} = -n + \sum_{j = 0}^{n - 1} \left \langle - \frac{[k^j l_i]}{q} \right  \rangle + 2 \sum_{j = 1}^{n - 1} \left \langle -\frac{j}{n} \right \rangle = -1 + \frac{n}{2}.
\end{equation}
By counting dimensions, we conclude $(\frac{n}{2} - 1)(\mathcal{W} \otimes_{\mathbb{Q}} \mathbb{C}) = H^0(C, \omega_C)$, yielding the desired representation structure of $H^1(C, \mathbb{Q})$ by the Hodge decomposition. We have $H^1(C, \mathbb{Q}) = \{0 \}$ in case $n = 2$. Moreover, for $n > 2$, we have $\mu_{\chi_i} = \frac{n}{2} - 1 \ge 1$, implying $N > 0$ by Corollary~\ref{corr:3.4}. Notice, in particular, that the characters $\chi_i$ are real-valued.

Now, suppose $n$ is odd. Here, the Chevalley-Weil formula yields, for $1 \le i \le \frac{s}{2}$:
\begin{align}
\label{eq:4.9}
    \mu_{\chi_i} = -n + \sum_{j = 0}^{n - 1} \left \langle - \frac{[k^j l_i]}{q} \right  \rangle + 2 \sum_{j = 1}^{n - 1} \left \langle -\frac{j}{n} \right \rangle &= -1 + n - \frac{1}{q}S_{l_i}, \\
    \mu_{\chi_{i + \frac{s}{2}}} = -n + \sum_{j = 0}^{n - 1} \left \langle - \frac{[k^j (q - l_i)]}{q} \right  \rangle + 2 \sum_{j = 1}^{n - 1} \left \langle -\frac{j}{n} \right \rangle &= -1 + n - \frac{1}{q}S_{l_{i + \frac{s}{2}}}.
\end{align}
We notice $\mu_{\chi_i} + \mu_{\chi_{i + \frac{s}{2}}} = n - 2$, so by counting dimensions, we again conclude that $H^0(C, \omega_C)$ is completely determined by~\eqref{eq:4.9}. This also yields the desired structure of $H^1(C, \mathbb{Q})$. 

By Proposition~\ref{prop:jacobiancm}, the case $n = 3$ gives rise to a special point with $N = 0$. On the other hand, for $n > 3$, there exists $l_i$ such that $q < S_{l_i}, S_{l_{i + \frac{s}{2}}} < q(n - 1)$, so $\mu_{\chi_i}, \mu_{\chi_{i + \frac{s}{2}}} > 0$. The characters $\chi_i$ are complex-valued, so Corollary~\ref{corr:3.4} implies $N > 0$.
\end{proof}
\begin{example}
Take $G \simeq (\mathbb{Z}/31\mathbb{Z}) \rtimes (\mathbb{Z}/5\mathbb{Z})$ in the given situation. This group has $6$ irreducible complex-valued characters $\chi_1, \dots, \chi_6$ of degree $5$. Calculating the $O_{l_i}$'s and applying the method in the proof of Proposition~\ref{prop:4.2}, the multiplicities of $\chi_1, \dots, \chi_6$ are, permuted in increasing order, $0, 1, 1, 2, 2, 3$. Proposition~\ref{prop:Nformula} yields $N = 4$.
\end{example}
For the remainder of this section we fix $n = 3$. Thus, our $G$-cover $C \rightarrow \mathbb{P}^1$ has local monodromy $\bm{m} = (q, 3, 3)$. Assume the $l_i$'s are ordered such that $S_{l_i} = q$ for $1 \le i \le \frac{s}{2}$. Suppose $V_i$ is the $\mathbb{C}[G]$-module corresponding to $\chi_i$. Then, the proof of Proposition~\ref{prop:4.2} indicates
\begin{equation}
\label{eq:h0}
    H^0(C, \omega_C) = \bigoplus_{i = 1}^{\frac{s}{2}} V_i, \quad \overline{H^0(C, \omega_C)} = \bigoplus_{i = 1 + \frac{s}{2}}^{s} V_i.
\end{equation}
As a further consequence of Proposition~\ref{prop:4.2}, $\text{Jac}(C)$ consists of one isotypic component; namely, the one associated to $\mathcal{W}$. Hence, $\text{Jac}(C)$ has endomorphism by the $\mathbb{Q}$-algebra $M_3(\mathbb{Q}(\zeta_q^{(3)}))$, and so Proposition~\ref{prop:jacobiancm} implies it has CM by $\mathbb{Q}(\zeta_q)$, a totally real degree $3$ extension of $\mathbb{Q}(\zeta_q^{(3)})$ contained in $M_3(\mathbb{Q}(\zeta_q^{(3)}))$.

The CM-type of $\text{Jac}(C)$ with respect to $\mathbb{Q}(\zeta_q)$ was computed in~\cite[Corollary 3.9]{Carocca2011}. For completeness, we shall fill in the details of the result, using a different methodology from~\cite{Carocca2011}. In particular, we will make use of \textit{primitive central idempotents}, and the computational methods here will again be used in Section~\ref{sec:5}. To begin, let us check that the induced $\mathbb{Q}(\zeta_q)$-action on $H^1(C, \mathbb{Q})$ is compatible with the $\mathbb{Q}[G]$-action. Let $e := e_{\mathbb{Q}}(\chi_1)$ be the primitive central idempotent associated to $\chi$. Then,
\begin{equation}
    e = \frac{1}{\# G}\left( n(q - 1) - n \sum_{\nu = 1}^{q - 1} a^\nu \right) = \frac{q - 1}{q} - \frac{1}{q} \sum_{\nu = 1}^{q - 1} a^\nu,
\end{equation}
and this is exactly the primitive central idempotent associated to $\mathbb{Q}(\zeta_q)$ in the group algebra decomposition of $\mathbb{Q}[a] \simeq \mathbb{Q}[\mathbb{Z}/q\mathbb{Z}]$. It follows
\begin{equation}
    \mathbb{Q}(\zeta_q) \simeq \mathbb{Q}[a] e \subseteq \mathbb{Q}[G] e \simeq M_3(\mathbb{Q}(\zeta_q^{(3)})),
\end{equation}
where the first isomorphism may be specified by the embedding
\begin{equation}
    \mathfrak{i} : \mathbb{Q}(\zeta_q) \hookrightarrow \text{End}^0(\text{Jac}(C)), \quad \zeta_q \mapsto ae
\end{equation}
associated to the abelian variety $\text{Jac}(C)$ (the image of $\mathfrak{i}$ is $\mathbb{Q}[a]e$).

By~\eqref{eq:4.4} and~\eqref{eq:h0}, there exists a basis of $H^0(C, \omega_C)$ such that the $ae$-action is the matrix
\begin{equation}
    \text{diag} \left \{ \zeta_q^{t_1}, \zeta_q^{k t_1}, \zeta_q^{k^2 t_1}, \zeta_q^{t_2}, \dots , \zeta_q^{t_{\frac{s}{2}}}, \zeta_q^{k t_{\frac{s}{2}}}, \zeta_q^{k^2 t_{\frac{s}{2}}} \right \},
\end{equation}
which computes the CM-type of $\text{Jac}(C)$. In summary:
\begin{theorem}[Theorem~\ref{thm:1.1}]
\label{thm:4.6}
Let $C \rightarrow \mathbb{P}^1$ be the $G$-Galois cover with local monodromy $\bm{m} = (q, 3, 3)$ as defined above. Then, the Jacobian $(\text{Jac}(C), \mathfrak{i})$ (with $\mathfrak{i}$ given above) has CM by $(\mathbb{Q}(\zeta_q), \Phi)$, where the type $\Phi$ is given by the following $g$ embeddings
\begin{equation}
    \sigma : \zeta_q \mapsto \zeta_q^{k^j t_i}, \quad j = 0, 1, 2, \ 1 \le i \le \frac{s}{2}
\end{equation}
of $\mathbb{Q}(\zeta_q)$ into $\mathbb{C}$.
\end{theorem}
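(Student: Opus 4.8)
The plan is to obtain $\Phi$ by simply reading off the analytic action of $\mathbb{Q}(\zeta_q)$ on the space of holomorphic differentials, which has already been diagonalized in the paragraphs preceding the statement. First I would recall the definition of the CM-type from Section~\ref{sec:2.4}: for a CM abelian variety $(A, \mathfrak{i})$ with $\mathfrak{i} : K \hookrightarrow \text{End}^0(A)$, one picks a basis of $H^1(A, \mathbb{C})$ on which $K$ acts diagonally, and $\Phi$ is defined as the set of those embeddings $\sigma : K \hookrightarrow \mathbb{C}$ whose $\mathfrak{i}(K)$-eigenspace lies in the Hodge summand $H^0(A, \Omega_A^1)$. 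Applying this to $A = \text{Jac}(C)$, $K = \mathbb{Q}(\zeta_q)$, and $\mathfrak{i} : \zeta_q \mapsto ae$ (which was shown earlier to exhibit CM by $\mathbb{Q}(\zeta_q)$, using $[\mathbb{Q}(\zeta_q) : \mathbb{Q}] = q - 1 = 2g$ together with Proposition~\ref{prop:jacobiancm}), the relevant Hodge summand is $H^0(\text{Jac}(C), \Omega^1) = H^0(C, \omega_C)$. By~\eqref{eq:4.4} and~\eqref{eq:h0}, the operator $ae = \mathfrak{i}(\zeta_q)$ acts on a suitable basis of $H^0(C, \omega_C)$ by $\text{diag}\{\zeta_q^{t_1}, \zeta_q^{k t_1}, \zeta_q^{k^2 t_1}, \dots, \zeta_q^{t_{s/2}}, \zeta_q^{k t_{s/2}}, \zeta_q^{k^2 t_{s/2}}\}$, so the embedding attached to the eigenvalue $\zeta_q^{k^j t_i}$ is exactly $\sigma : \zeta_q \mapsto \zeta_q^{k^j t_i}$. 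This immediately produces the list of $g$ embeddings in the statement.

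The one genuine step is to check that this list is a bona fide CM-type, i.e. that the $g$ embeddings $\sigma : \zeta_q \mapsto \zeta_q^{k^j t_i}$ (with $j = 0, 1, 2$ and $1 \le i \le \frac{s}{2}$) are pairwise distinct and contain exactly one member of each complex-conjugate pair $\{\zeta_q^\alpha, \zeta_q^{-\alpha}\}$. For distinctness: the exponents $k^j t_i \bmod q$ range over $A := \bigcup_{i=1}^{s/2} O_{l_i}$, and since $\{1, \dots, q-1\} = \bigsqcup_{i=1}^s O_{l_i}$ with each $O_{l_i}$ of size $n = 3$, the set $A$ has $3 \cdot \frac{s}{2} = \frac{q-1}{2} = g$ elements, in agreement with~\eqref{eq:4.7}. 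For the conjugate-pair property: since $n = 3$ is odd, $-1 \notin \langle k \rangle$ in $(\mathbb{Z}/q\mathbb{Z})^{\ast}$, whence $O_{-l} = -O_l$ is disjoint from $O_l$; with the ordering $l_{i + s/2} = q - l_i$ fixed earlier, $O_{l_{i+s/2}} = -O_{l_i}$, so $A$ and $-A$ together use all $s$ of the sets $O_{l_i}$ and therefore $\{1, \dots, q-1\} = A \sqcup (-A)$. Hence for every $\alpha$ exactly one of $\alpha, -\alpha$ lies in $A$, which is precisely the claim.

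I do not expect a real obstacle: the statement is essentially the bookkeeping that packages Proposition~\ref{prop:4.2}, the identification of $\mathfrak{i}$ via the primitive central idempotent $e$, and the eigenvalue list coming from~\eqref{eq:4.4}. The only point demanding care is the sign convention in the definition of the CM-type — one must read the eigenvalues of $\mathfrak{i}(\zeta_q)$ on $H^0(C, \omega_C)$ itself (equivalently, on the cotangent space of $\text{Jac}(C)$ at the origin), with no auxiliary complex conjugation, so that the eigenvalue $\zeta_q^{k^j t_i}$ yields the embedding $\zeta_q \mapsto \zeta_q^{k^j t_i}$ rather than its conjugate. But this matches verbatim the formulation of $\Phi$ in Section~\ref{sec:2.4}, so the computation goes through directly.
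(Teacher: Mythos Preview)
Your proposal is correct and follows essentially the same route as the paper: the theorem is stated as a summary of the computation immediately preceding it, which diagonalizes the action of $\mathfrak{i}(\zeta_q)=ae$ on $H^0(C,\omega_C)$ via~\eqref{eq:4.4} and~\eqref{eq:h0} and then reads off the eigenvalues as the CM-type. Your additional verification that the resulting set of exponents has size $g$ and picks exactly one representative from each conjugate pair is a welcome sanity check that the paper leaves implicit.
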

Note that we have recovered some CM Jacobian varieties arising from cyclic Galois covers, studied by Li et al.~\cite{li2018newton}:
\begin{proposition}
\label{prop:4.6}
Denote by $C \rightarrow \mathbb{P}^1$ the $G$-Galois cover with local monodromy $\bm{m} = (q, 3, 3)$ as previously defined and $C' \rightarrow \mathbb{P}^1$ the $\mathbb{Z}/q\mathbb{Z}$-cyclic cover with datum $\bm{y} = (1, k, k^2)$. Then, the Jacobians $\text{Jac}(C)$ and $\text{Jac}(C')$ are isogenous.
\end{proposition}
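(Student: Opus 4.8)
The plan is to compare the CM-pairs of $\text{Jac}(C)$ and $\text{Jac}(C')$ and invoke the fact that a CM abelian variety is determined up to isogeny by its CM-type~\cite[Proposition 3.12]{jsmilne}. Since Theorem~\ref{thm:4.6} already records the CM-pair $(\mathbb{Q}(\zeta_q), \Phi)$ of $\text{Jac}(C)$, it suffices to show that $\text{Jac}(C')$ has CM by $\mathbb{Q}(\zeta_q)$ with the very same type $\Phi$ for the analogous choice of embedding.

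First I would treat the cyclic cover $C'$ with the same tools used throughout Section~\ref{sec:4}, which in the cyclic case is the classical analysis of~\cite[Lemma 3.1]{li2018newton}. Write $\mathbb{Z}/q\mathbb{Z} = \langle a' \rangle$ and let $\chi^{(\ell)}$ be the character $a' \mapsto \zeta_q^{\ell}$. Since $k$ has multiplicative order $3$ modulo $q$, we have $1 + k + k^2 \equiv 0 \pmod q$, so $\bm{y} = (1, k, k^2)$ is a genuine monodromy datum, and Riemann--Hurwitz gives $g(C') = \tfrac{q-1}{2} = g(C)$. Applying Chevalley--Weil (Theorem~\ref{thm:cw}) to $(\mathbb{Z}/q\mathbb{Z}, \bm{y})$ yields, for $\ell \neq 0$,
\[
    \mu_{\chi^{(\ell)}} = -1 + \left\langle -\tfrac{\ell}{q} \right\rangle + \left\langle -\tfrac{[k\ell]}{q} \right\rangle + \left\langle -\tfrac{[k^2\ell]}{q} \right\rangle = 2 - \frac{S_{\ell}}{q},
\]
with $S_\ell = \ell + [k\ell] + [k^2\ell]$ as in Lemma~\ref{basiclemma4.3}. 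By Lemma~\ref{basiclemma4.3}(ii), $S_\ell \in \{q, 2q\}$, so $\mu_{\chi^{(\ell)}} \in \{0,1\}$, equal to $1$ exactly when $S_\ell = q$, which occurs for $\tfrac{q-1}{2} = g(C')$ values of $\ell$. Hence $H^0(C', \omega_{C'})$ is multiplicity-free with $\langle a'\rangle$-character $\sum_{S_\ell = q} \chi^{(\ell)}$; by Theorem~\ref{thm:isotypicdec} together with the (equality) case of Proposition~\ref{prop:jacobiancm} that always applies to cyclic covers, $\text{End}^0(\text{Jac}(C'))$ contains $\mathbb{Q}(\zeta_q)$ of degree $q-1 = 2g(C')$, and with the embedding $\mathfrak{i}' : \zeta_q \mapsto a' e'$ — where $e'$ is the idempotent cutting out the $\langle a'\rangle$-nontrivial part, exactly as for $\text{Jac}(C)$ — the CM-type is
\[
    \Phi' = \bigl\{\, \zeta_q \mapsto \zeta_q^{\ell} \ :\ 1 \le \ell \le q-1,\ S_\ell = q \,\bigr\}.
\]

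Second, I would identify $\Phi'$ with the type $\Phi$ of Theorem~\ref{thm:4.6}, whose exponents are $\{k^j t_i : j = 0,1,2,\ 1 \le i \le \tfrac{s}{2}\}$ with $t_i \equiv l_i$ and $S_{l_i} = q$. The key observation is that the $k$-orbits $O_l = \{l, [kl], [k^2l]\}$ partition $\{1, \dots, q-1\}$ into $s = \tfrac{q-1}{3}$ size-$3$ orbits, on each of which $S$ is constant (since $S_{[kl]} = [kl] + [k^2l] + [k^3l] = S_l$ because $k^3 \equiv 1$). Exactly $\tfrac{s}{2}$ of these orbits have $S$-value $q$, and their union is simultaneously $\{\ell : S_\ell = q\}$ and $\{k^j t_i : j=0,1,2,\ 1 \le i \le \tfrac{s}{2}\}$. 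Therefore $\Phi' = \Phi$ as subsets of $\Hom(\mathbb{Q}(\zeta_q), \mathbb{C})$, and $\text{Jac}(C) \sim \text{Jac}(C')$ follows from~\cite[Proposition 3.12]{jsmilne}.

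The only delicate point — not a deep obstacle, but one that must be handled carefully — is the normalization of conventions, so that the two CM-types come out equal rather than complex-conjugate. One must read off eigenvalues of the $\mathbb{Z}/q\mathbb{Z}$-action on the spaces of holomorphic $1$-forms (not on their duals) with the same orientation for both curves, and define $\mathfrak{i}$ and $\mathfrak{i}'$ via the analogous idempotents $ae$ and $a'e'$; with these matching choices the orbit-partition argument above makes the identification $\Phi = \Phi'$ immediate.
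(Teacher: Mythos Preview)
Your proposal is correct and follows essentially the same route as the paper: both compute the CM-type of the cyclic cover $C'$ via Chevalley--Weil (obtaining multiplicity $2 - S_\ell/q$ for the character $\chi^{(\ell)}$), identify the resulting type with the $\Phi$ of Theorem~\ref{thm:4.6}, and then invoke that a CM-pair determines the abelian variety up to isogeny. The paper phrases the first step in the language of the signature type $\mathfrak{f}$ from~\cite{li2018newton} rather than computing all $\mu_{\chi^{(\ell)}}$, but the underlying computation and logic are the same; your added remark that $S$ is constant on $k$-orbits makes the identification $\Phi=\Phi'$ slightly more explicit than in the paper.
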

\begin{proof}
Using notation and terminology from~\cite[Section 2]{li2018newton}, it suffices to show that the signature type $\mathfrak{f}$ of $(\mathbb{Z}/q\mathbb{Z}, \bm{y})$---in this case another word for CM-type of $\text{Jac}(C')$---corresponds to the CM-type of $(\text{Jac}(C), \mathfrak{i})$. That is, it is enough to show $\mathfrak{f}(\tau_{k^j t_i}) = 1$ whenever $j = 0, 1, 2, \ 1 \le i \le \frac{s}{2}$ (automatically $\mathfrak{f}(\tau_{\nu}) + \mathfrak{f}(\tau_{q - \nu}) = 1$ for all $1 < \nu < q$). However, by the Chevalley-Weil formula (see~\cite[Equation 2.3]{li2018newton}, a restatement of Theorem~\ref{thm:cw}),
\begin{equation}
    \mathfrak{f}(\tau_{k^j t_i}) = -1 + \sum_{\nu = 0}^{2} \left \langle -\frac{k^{j + \nu} t_i}{q} \right \rangle = -1 + \sum_{\nu = 0}^{2} \left \langle -\frac{k^{\nu} l_i}{q} \right \rangle = 2 - \frac{1}{q} S_{l_i} = 1 \quad \text{if $1 \le i \le \frac{s}{2}$}.
\end{equation}
Notice this is the same expression as~\eqref{eq:4.9}. The result follows by the basic theory of CM abelian varieties (cf. Section~\ref{sec:2.4}).
\end{proof}
\begin{remark}
Notice by Theorem~\ref{thm:isotypicdec} and Proposition~\ref{prop:jacobiancm}, $\text{Jac}(C)$ is isogenous to $B^3$, where $B$ is an abelian subvariety of dimension $\frac{g}{3} = \frac{q - 1}{6}$. Then, $B$ is CM over the subfield $\mathbb{Q}(\zeta_q^{(3)})$ of index $3$ in $\mathbb{Q}(\zeta_q)$. This verifies that the Jacobian associated to the cyclic cover $C' \rightarrow \mathbb{P}^1$ in Proposition~\ref{prop:4.6} is \textit{not simple}; we would have not been able to obtain this result using only the theory of cyclic Galois covers, as in~\cite{li2018newton}.
\end{remark}

\section{Classification of Dicyclic Covers}
\label{sec:5}

In this section we extend the methods of Section~\ref{sec:4} to some groups with irreducible characters of higher Schur indices. To start, let $n > 1$ be a positive integer; we denote by $\text{Dic}_n$ the \textit{dicyclic group} of order $4n$:
\begin{equation}
\label{eq:dicall}
    \text{Dic}_n \simeq \langle x, y \ | \ x^{2n} = 1, x^n = y^2, y^{-1} x y = x^{-1} \rangle.
\end{equation}
In case $n$ is odd, we may write the semidirect product
\begin{equation}
\label{eq:dicq}    
    \text{Dic}_n \simeq (\mathbb{Z}/n\mathbb{Z}) \rtimes_2 (\mathbb{Z}/4\mathbb{Z}) \simeq \langle a, b \ | \ a^n = b^4 = 1, b^{-1} a b = a^{-1} \rangle.
\end{equation}
Here, the notation $\rtimes_2$ indicates that the map $\mathbb{Z}/4\mathbb{Z} \rightarrow \text{Aut}(\mathbb{Z}/n\mathbb{Z})$ has kernel isomorphic to $\mathbb{Z}/2\mathbb{Z}$. In Section~\ref{sec:dicyclicgroups} we study the case $G = \text{Dic}_q$ where $q$ is an odd prime. 

The case of the quaternion group $G = \text{Dic}_2 \simeq Q_8$:
\begin{equation}
\label{eq:quaterniongroup}
    Q_8 = \langle \mathbf{i}, \mathbf{j} \ | \ \mathbf{i}^2 = \mathbf{j}^2 = -1, \mathbf{ij} = -\mathbf{ji} \rangle
\end{equation}
is considered in Section~\ref{sec:quaterniongroup}. We use slightly different methods than the case $G = \text{Dic}_q$ noted above.

\subsection{Dicyclic Groups of Order \texorpdfstring{$4q$}{Lg}}
\label{sec:dicyclicgroups}

Throughout this section let $G := \text{Dic}_q$, where $q$ is an odd prime, as defined in~\eqref{eq:dicq}. We begin by noting the possible Hurwitz equivalence classes of $(G, \bm{x})$.
\begin{proposition}
\label{prop:monodromydicyclic}
Let $C \rightarrow \mathbb{P}^1$ be a $G$-Galois cover, branched at $3$ points, with monodromy datum $(G, \bm{x})$. Then, the local monodromy $\bm{m}$ is either $(q, 4, 4)$ or $(2q, 4, 4)$, and these correspond to all the possible distinct Hurwitz equivalence classes of data.
\end{proposition}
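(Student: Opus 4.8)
The plan is to analyze the constraints that a spherical system of generators $\bm{x}=(x_1,x_2,x_3)$ of $G=\text{Dic}_q$ must satisfy, first recording the element orders of $G$ and then passing to the abelianization. Set $N=\langle a\rangle\triangleleft G$, so that $G/N\cong\mathbb{Z}/4\mathbb{Z}$, and note that $b^2$ is the unique central involution. Using $b^{-1}ab=a^{-1}$ together with $q$ odd: every non-identity element of $N$ has order $q$; every element of $Nb\cup Nb^3$ has order $4$ (since $(a^jb)^2=b^2$); and each element of $Nb^2\setminus\{b^2\}$ has order $2q$. Moreover $[G,G]=\langle a^2\rangle=N$ (again because $q$ is odd), so $G^{\text{ab}}\cong\mathbb{Z}/4\mathbb{Z}$ and the images $\overline{x_1},\overline{x_2},\overline{x_3}$ must generate $\mathbb{Z}/4\mathbb{Z}$ and sum to $0$. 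A short enumeration shows the only such multisets in $\mathbb{Z}/4\mathbb{Z}$ are $\{1,1,2\}$, $\{3,3,2\}$ and $\{0,1,3\}$, and the automorphism $a\mapsto a$, $b\mapsto b^{-1}$ carries the second onto the first; so up to $\text{Aut}(G)$ there are exactly two families.

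For the family with image multiset $\{0,1,3\}$, a braid move (which permutes the entries up to conjugation, hence preserves their orders) lets me assume $x_3\in N\setminus\{1\}$, so $m_3=q$ as $q$ is prime, while $x_1\in Nb$ and $x_2\in Nb^3$ give $m_1=m_2=4$; thus the local monodromy is $(q,4,4)$. For uniqueness of the Hurwitz class, the automorphism $a\mapsto a^r$ normalizes $x_3$ to $a$, and then the relation $x_1x_2x_3=1$ together with the automorphism $b\mapsto a^sb$ reduces the triple to the single representative $(b,\,ab^{-1},\,a)$, which does generate $G$.

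For the family with image multiset $\{1,1,2\}$, write $x_1=a^pb$ and $x_2=a^{p'}b$; the product relation forces $x_3=a^{p'-p}b^2$. If $p=p'$ then $\langle x_1,x_2\rangle=\langle a^pb\rangle$ has order $4$, which is impossible, so $p\neq p'$, hence $x_3$ has order $2q$ and the local monodromy is $(2q,4,4)$; the same two families of automorphisms then reduce every such triple to, say, $(b,\,ab,\,ab^2)$, which generates $G$. Altogether the local monodromy of any datum is $(q,4,4)$ or $(2q,4,4)$ up to reordering, each comes from exactly one Hurwitz class, and the two classes are distinct since their local monodromies differ.

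I expect the main work to lie in the two normalization steps: showing that, modulo the subgroup of $\text{Aut}(G)$ generated by $a\mapsto a^r$ and $b\mapsto a^sb^{\pm1}$ together with the $\mathbf{B}_3$-action, each family collapses to a single orbit. This is careful bookkeeping with the relation $ba=a^{-1}b$, where one must track that braid moves act by conjugation (not bare permutation) when moving the order-$q$ (respectively order-$2q$) generator into a fixed slot; a minor point not to overlook is confirming that the two proposed representatives really generate $G$, so that both families are nonempty.
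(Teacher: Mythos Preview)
Your approach is essentially the same as the paper's: both reduce to the quotient $G/\langle a\rangle\cong\mathbb{Z}/4\mathbb{Z}$, observe that the images of the three generators must sum to $0$ and contain a generator, split into the two coset patterns, rule out $x_i=b^2$ via the centrality argument, and then assert that $\text{Aut}(G)\times\mathbf{B}_3$ collapses each pattern to a single representative. Your version is more explicit (the element-order table, the enumeration of multisets in $\mathbb{Z}/4\mathbb{Z}$, and the named automorphisms $a\mapsto a^r$, $b\mapsto a^sb^{\pm1}$), whereas the paper simply says ``by an appropriate $\mathbf{B}_3\times\text{Aut}(G)$-action we may assume\ldots''; in particular, the paper leaves the uniqueness-of-orbit verification at exactly the same level of detail that your final paragraph flags as the remaining bookkeeping.
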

\begin{proof}
Notice that one SSG in $\bm{x}$ must lie in $\langle a \rangle$ or $b^2 \langle a \rangle$. In the first case, if $x_1 \in \langle a \rangle \setminus \{ 1 \}$, then $x_2, x_3$ must lie in the cosets $b \langle a \rangle$ and $b^3 \langle a \rangle$ in some order, in order for $x_1, x_2, x_3$ to generate $G$. By an appropriate $\mathbf{B}_3 \times \text{Aut}(G)$-action we may assume $x_1 = a, x_2 = b, x_3 = b^3 a^{-1}$.

In the second case, if $x_1 \in b^2 \langle a \rangle$, then notice $x_1 \neq b^2$; otherwise, $x_1 = b^2$ lies in the center $Z(G)$ so $x_1, x_2, x_3$ generates a commutative subgroup of $G$, a contradiction. Then, $x_2, x_3$ must both lie in either $b \langle a \rangle$ or $b^3 \langle a \rangle$. By an appropriate $\mathbf{B}_3 \times \text{Aut}(G)$-action we may assume $x_1 = ab^2 , x_2 = b, x_3 = b a^{-1}$.
\end{proof}

\subsubsection*{Representation Theory of $\text{{\normalfont Dic}}_q$ over $\mathbb{C}$ and its Subfields}

We shall review the representation theory of $G$ over $\mathbb{C}$ and its subfields $\mathbb{Q}, \mathbb{Q}(\zeta_4)$. First, there are $q + 3$ conjugacy classes of $G$, as follows. The center $Z(G)$ consists of $\{1\}, \{b^2\}$. There are $\frac{q - 1}{2}$ size $2$ conjugacy classes of $G$ contained in $\langle a \rangle$ of the form $\{a, a^{-1}\}, \{a^2, a^{-2}\}, \dots$. Likewise, there are $\frac{q - 1}{2}$ size $2$ conjugacy classes of $G$ contained in $b^2 \langle a \rangle$ of the form $\{b^2 a, b^2 a^{-1}\}, \{b^2 a^2, b^2 a^{-2}\}, \dots$. Finally, the cosets $b\langle a \rangle, b^3 \langle a \rangle$ are each conjugacy classes themselves. All in all, there are $q + 3$ conjugacy classes of $G$. 

Since $G' = \langle a \rangle$, we have $G/G' \simeq \mathbb{Z}/4\mathbb{Z}$, so $G$ has $4$ complex linear characters $\psi_0, \psi_1, \psi_2, \psi_3$, where $\psi_0$ is trivial, $\psi_1(b) = \zeta_4 = \sqrt{-1}, \psi_2(b) = -1, \psi_3(b) = \zeta_4^3 = -\sqrt{-1}$. Now, $G/Z(G) \simeq D_q$, and there are $s := \frac{q - 1}{2}$ degree-$2$ irreducible representations of $D_q$ that lift to degree-$2$ (irreducible) representations of $G$, each of Schur index $1$ over any subfield $F \subseteq \mathbb{C}$. Using notation from Section~\ref{sec:faithfulmetacyclic}, these representations are given by~\eqref{eq:4.4}, with $t_i = i$. 
We let $\chi_i$ be the character of $\rho_i$.
To find the remaining irreducible representations of $G$, consider the tensor product representations of the form $\psi_1 \otimes \rho_i$. One may compute the following matrix representations afforded by $\rho_i, \psi_1 \otimes \rho_i$:
\begin{align}
    \rho_i : a &\mapsto 
    \begin{bmatrix}
    0 & 1 \\
    -1 & \zeta_q^i + \zeta_q^{-i}
    \end{bmatrix}, \quad b \mapsto
    \begin{bmatrix}
    1 & -(\zeta_q^i + \zeta_q^{-i}) \\
    0 & -1
    \end{bmatrix} \nonumber \\
    \psi_1 \otimes \rho_i : a &\mapsto 
    \begin{bmatrix}
    0 & 1 \\
    -1 & \zeta_q^i + \zeta_q^{-i}
    \end{bmatrix}, \quad b \mapsto
    \begin{bmatrix}
    \zeta_4 & -(\zeta_q^i + \zeta_q^{-i})\zeta_4 \\
    0 & -\zeta_4 \label{eq:symplecticrep} 
    \end{bmatrix}.
\end{align}
Put $F_0 := \mathbb{Q}(\zeta_q + \zeta_q^{-1})$, the character field of each $\rho_i, \psi_1 \otimes \rho_i$ ($1 \le i \le \frac{q - 1}{2}$). Then,~\eqref{eq:symplecticrep} says $\psi_1 \otimes \rho_i$ may be realized over the field $F_0(\zeta_4)$, a quadratic extension of the character field $F_0$. Furthermore, the Schur index of each $\psi_1 \otimes \chi_i$ over the degree-$2$ extension $\mathbb{Q}(\zeta_4) \supset \mathbb{Q}$ is $1$. The representations $\psi_1 \otimes \rho_i$ are irreducible, i.e. by the irreducibility of $\rho_i$ and the fact $\psi_1$ is linear. Alternatively, if $\theta_j, \ 1 \le j \le q - 1$ is the linear representation of $\mathbb{Z}/2q\mathbb{Z} \simeq \langle b^2 a \rangle$ corresponding to the map $1 \mapsto \zeta_{2q}^j$, then notice $\text{Ind}_{\langle b^2 a \rangle}^G(\theta_{2i - 1})$ is isomorphic to $\psi_1 \otimes \rho_i$ for every $i$. Since each $\theta_j$ is linear with $\mathbb{Q}(\theta_j) = \mathbb{Q}(\zeta_{2q}) = \mathbb{Q}(\zeta_q)$, it follows $\psi \otimes \rho_i$ may be realized over $\mathbb{Q}(\zeta_q) = F_0(\zeta_q)$. This illustrates that the minimum degree extension of $F_0$ which realizes the character $\psi_1 \otimes \rho_i$ is not unique.
\begin{proposition}
\label{prop:6.3}
For any $i$, the character $\psi_1 \otimes \chi_i \in \text{Irr}(G)$ is symplectic. In particular, the Schur index of $\psi_1 \otimes \chi_i$ is $m_{\mathbb{R}}(\psi_1 \otimes \chi_i) = m_{\mathbb{Q}}(\psi_1 \otimes \chi_i) = 2$.
\end{proposition}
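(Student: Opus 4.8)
The plan is to compute the Frobenius--Schur indicator $\iota_{\psi_1 \otimes \chi_i}$ of \eqref{eq:frobschur} directly, show it equals $-1$, and then deduce the Schur index claim from standard facts. Since $\psi_1 \otimes \chi_i$ has degree $2$, this is a finite computation once the squaring map on $G$ is understood.

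First I would partition $G = \langle a, b\rangle$ into the four cosets of $\langle a\rangle$ and compute squares coset by coset, using $b^{-1}ab = a^{-1}$ and the centrality of $b^2$. One finds $(a^j b)^2 = a^j(ba^j b^{-1})b^2 = a^j a^{-j} b^2 = b^2$ and, similarly (since $b^3 a^j b^{-3} = a^{-j}$ and $b^6 = b^2$), $(a^j b^3)^2 = b^2$, for every $j$; while $(a^j)^2 = a^{2j}$ and $(a^j b^2)^2 = a^{2j}$. Because $q$ is odd, $j \mapsto 2j$ permutes $\mathbb{Z}/q\mathbb{Z}$, so squaring carries each of $\langle a\rangle$ and $b^2\langle a\rangle$ bijectively onto $\langle a\rangle$, and sends all $2q$ elements of $b\langle a\rangle \cup b^3\langle a\rangle$ to the single element $b^2$. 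Hence for any class function $\theta$ on $G$,
\[
\sum_{x \in G} \theta(x^2) = 2\sum_{g \in \langle a\rangle} \theta(g) + 2q\,\theta(b^2).
\]

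Next I would substitute $\theta = \psi_1 \otimes \chi_i$. From the matrix model \eqref{eq:symplecticrep}, $\rho_i(a)$ has eigenvalues $\zeta_q^{\pm i}$, so $\chi_i(a^j) = \zeta_q^{ij} + \zeta_q^{-ij}$; since $1 \le i \le \tfrac{q-1}{2}$ we have $\zeta_q^i \neq 1$, hence $\sum_{j=0}^{q-1}\chi_i(a^j) = 0$, and as $\psi_1$ is trivial on $\langle a\rangle = G'$ the first sum vanishes. For the second term, $\psi_1(b^2) = \zeta_4^2 = -1$, while $\rho_i(b)^2 = I$ in \eqref{eq:symplecticrep} (equivalently $b^2 \in Z(G)$ lies in the kernel of the lift from $D_q$), so $\chi_i(b^2) = 2$ and $(\psi_1\otimes\chi_i)(b^2) = -2$. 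Therefore $\sum_{x\in G}(\psi_1\otimes\chi_i)(x^2) = -4q$, and $\iota_{\psi_1\otimes\chi_i} = \tfrac{1}{\#G}(-4q) = -1$, so $\psi_1 \otimes \chi_i$ is symplectic. For the Schur index, $\iota = -1$ already forces $\psi_1\otimes\chi_i$ to be real-valued and not affordable over $\mathbb{R}$, so $m_{\mathbb{R}}(\psi_1\otimes\chi_i) = 2$ as recalled in Section~\ref{sec:representationtheory}; since $\mathbb{Q} \subseteq \mathbb{R}$ gives $m_{\mathbb{R}}(\psi_1\otimes\chi_i) \mid m_{\mathbb{Q}}(\psi_1\otimes\chi_i)$ and the Brauer--Speiser theorem bounds $m_{\mathbb{Q}}$ of a real-valued character by $2$, we conclude $m_{\mathbb{Q}}(\psi_1\otimes\chi_i) = 2$. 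I do not expect a genuine obstacle; the only delicate points are the bookkeeping of the squaring map and confirming $\chi_i(b^2) = 2$ (not $-2$), which is precisely where the semidirect-product presentation with $b^4 = 1$ — rather than the standard dicyclic relation $b^2 = a^{\bullet}$ — is what matters.
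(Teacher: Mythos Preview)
Your proof is correct and follows essentially the same route as the paper: both compute the Frobenius--Schur indicator by analyzing the squaring map coset-by-coset (the paper observes $(b^j a^\nu)^2 = a^{2\nu}$ for $j$ even and $= b^2$ for $j$ odd, exactly matching your partition), obtaining $\iota = -1$ and then deducing the Schur index statements. The only minor difference is that for the upper bound $m_{\mathbb{Q}}(\psi_1\otimes\chi_i) \le 2$ the paper cites the explicit realization over $F_0(\zeta_4)$ established in \eqref{eq:symplecticrep} just before the proposition, whereas you invoke Brauer--Speiser; both work.
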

\begin{proof}
We have already shown $m_{\mathbb{Q}}(\psi_1 \otimes \rho_i) \le 2$. Since $m_{\mathbb{R}}(\psi_1 \otimes \rho_i) \le m_{\mathbb{Q}}(\psi_1 \otimes \rho_i)$, it suffices to prove $m_{\mathbb{R}}(\psi_1 \otimes \rho_i) = 2$. However, the Frobenius-Schur indicator (cf.~\eqref{eq:frobschur}) is
\begin{equation}
\begin{aligned}
    \iota_{\psi_1 \otimes \rho_i} &= \frac{1}{4q}\left(\sum_{\nu = 0}^{q - 1} \sum_{j = 0}^3 (\psi_1 \otimes \rho_i)((b^j a^{\nu})^2) \right) \\
    &= \frac{1}{4q} \left( 2 \cdot \left( \sum_{\nu = 0}^{q - 1} (\psi_1 \otimes \rho_i)(a^{2 \nu}) \right) + 2 \cdot \left( \sum_{\nu = 0}^{q - 1} (\psi_1 \otimes \rho_i)(b^2) \right) \right) = -1,
\end{aligned}
\end{equation}
where we have observed that $(b^j a^{\nu})^2 = a^{2 \nu}$ if $j$ is even and $(b^j a^{\nu})^2 = b^2$ if $j$ is odd. Thus, $\psi_1 \otimes \chi_i$ is a symplectic character. The results on the Schur index of $\psi_1 \otimes \chi_i$ follow immediately.
\end{proof}
By the results in Section~\ref{sec:representationtheory} on the Schur index, the rational group algebra decomposition is
\begin{equation}
    \mathbb{Q}[G] \simeq \mathbb{Q} \times \mathbb{Q} \times \mathbb{Q}(\zeta_4) \times M_2(F_0) \times \Delta_2(F_0),
\end{equation}
where $\Delta_2(F_0)$ is a degree-$4$ central division algebra over $F_0$. We let $\chi = \sum_{j = 1}^s \chi_j$, so that $\psi_1 \otimes \chi = \sum_{j = 1}^s (\psi_1 \otimes \chi_j)$. In everything that follows we denote by $\mathcal{W}^+$ and $\mathcal{W}^-$ the rational irreducible representations of $G$ whose respective characters are $\chi$ and $2(\psi_1 \otimes \chi)$; they correspond to the components $M_2(F_0)$ and $\Delta_2(F_0)$. Using the Schur indices of $G$ over $\mathbb{Q}(\zeta_4)$, we also obtain the group algebra $\mathbb{Q}(\zeta_4)[G]$:
\begin{equation}
    \mathbb{Q}[G] \otimes_{\mathbb{Q}} \mathbb{Q}(\zeta_4) \simeq  \mathbb{Q}(\zeta_4)[G] \simeq \mathbb{Q}(\zeta_4) \times \mathbb{Q}(\zeta_4) \times \mathbb{Q}(\zeta_4) \times \mathbb{Q}(\zeta_4) \times M_2(F_0(\zeta_4)) \times M_2(F_0(\zeta_4)),
\end{equation}
which is needed to handle the symplectic characters (cf. Theorem~\ref{prop:jacobiancmsymplectic}).

\subsubsection*{Local monodromy $\bm{m} = (q, 4, 4)$}

Let $G := \text{Dic}_q$ for $q$ an odd prime. Throughout this section, we fix a $G$-Galois cover $C \rightarrow \mathbb{P}^1$ of genus $g$ with monodromy datum $(G, \bm{x})$ such that $\bm{x} = (a, b, b^3 a^{-1}), \bm{m} = (q, 4, 4)$, corresponding to the first Hurwitz equivalence class identified by Proposition~\ref{prop:monodromydicyclic}.
\begin{proposition}[Theorem~\ref{thm:1.3}]
\label{prop:6.4}
Let $C \rightarrow \mathbb{P}^1$ be the $G$-Galois cover with local monodromy $\bm{m} = (q, 4, 4)$ as defined above. Then, $H^0(C, \omega_C)$ may be computed with the Chevalley-Weil formula. As a consequence,
\begin{enumerate}[nolistsep, label={(\roman{enumi})}]
    \item The $\mathbb{Q}[G]$-module $H^1(C, \mathbb{Q})$ is isomorphic to $\mathcal{W}^-$ (the rational irreducible representation of $G$ with character $2(\psi_1 \otimes \chi)$).
    \item The cover $C \rightarrow \mathbb{P}^1$ gives rise to a special point with $N = 0$.
\end{enumerate}
\end{proposition}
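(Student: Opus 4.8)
The plan is to run the Chevalley–Weil formula (Theorem~\ref{thm:cw}) against the full character table of $G = \text{Dic}_q$, showing that the only irreducible characters appearing in $H^0(C,\omega_C)$ with positive multiplicity are the symplectic characters $\psi_1 \otimes \chi_i$, each with multiplicity exactly $1$; then part (i) follows from the Hodge decomposition~\eqref{eq:hodgedecomposition} and the identification of $\mathcal{W}^-$, and part (ii) follows immediately from the symplectic clause of Corollary~\ref{corr:3.4}. First I would compute the genus via Riemann–Hurwitz~\eqref{eq:RiemannHurwitz} with $\#G = 4q$ and $\bm{m} = (q,4,4)$: the ramification contribution is $\tfrac12\bigl(\tfrac{4q}{q}(q-1) + \tfrac{4q}{4}\cdot 3 + \tfrac{4q}{4}\cdot 3\bigr) = \tfrac12(4(q-1) + 3q + 3q) = 5q - 2$, giving $g = 1 - 4q + 5q - 2 = q - 1$. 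Since $\dim_{\mathbb Q} \mathcal{W}^- = 4\cdot\tfrac{q-1}{2} = 2(q-1) = 2g$ (the character $2(\psi_1\otimes\chi)$ has degree $2\cdot 2\cdot\tfrac{q-1}{2}$), this dimension count is exactly what we must match.

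Next I would evaluate the Chevalley–Weil multiplicities character by character, using the explicit generators $x_1 = a$ (order $q$), $x_2 = b$ (order $4$), $x_3 = b^3a^{-1}$ (order $4$). For each $\chi \in \text{Irr}(G)$ I need the eigenvalues of $\rho_\chi(a)$, $\rho_\chi(b)$, $\rho_\chi(b^3a^{-1})$. For the four linear characters $\psi_j$: $\psi_j(a) = 1$, so the $x_1$-term contributes $0$; the $x_2$- and $x_3$-terms contribute $\langle -j/4\rangle + \langle -3j/4\rangle$ (using $\psi_j(b^3a^{-1}) = \psi_j(b)^3 = \zeta_4^{3j}$), and adding $-d_\chi + \epsilon$ one checks each $\mu_{\psi_j} = 0$ — in particular the \emph{orthogonal} characters $\psi_0, \psi_2$ have multiplicity $0$, as required by Corollary~\ref{corr:3.4}(i). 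For the degree-$2$ characters $\chi_i$ lifted from $D_q$: from the matrix form~\eqref{eq:symplecticrep} (with $\zeta_4$ suppressed) the eigenvalues of $\rho_i(a)$ are $\zeta_q^{\pm i}$, while $\rho_i(b)$ and $\rho_i(b^3a^{-1})$ both have eigenvalues $\pm 1$ (order $2$, since $\rho_i$ factors through $D_q$); the fractional-part sums give $\mu_{\chi_i} = -2 + (\langle -i/q\rangle + \langle i/q\rangle) + 2(\langle 0\rangle + \langle -1/2\rangle) = -2 + 1 + 1 = 0$, so the $\chi_i$ (which are orthogonal, being real and realizable over $F_0$) do not appear — again consistent with Corollary~\ref{corr:3.4}(i). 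Finally for $\psi_1\otimes\chi_i$: the eigenvalues of $(\psi_1\otimes\rho_i)(a)$ are still $\zeta_q^{\pm i}$, but those of $(\psi_1\otimes\rho_i)(b)$ are $\pm\zeta_4$, i.e. primitive $4$th roots $\zeta_4$ and $\zeta_4^{3}$, and similarly for $b^3a^{-1}$; the fractional-part sums then give $\mu_{\psi_1\otimes\chi_i} = -2 + (\langle -i/q\rangle + \langle i/q\rangle) + 2(\langle -1/4\rangle + \langle -3/4\rangle) = -2 + 1 + 2(\tfrac34 + \tfrac14) = -2 + 1 + 2 = 1$. (I should double-check the ordering of eigenvalues of the second and third generators, possibly after a $\mathbf B_3\times\text{Aut}(G)$ normalization, but the multiplicity is insensitive to which primitive $4$th root is which.)

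Summing up: $H^0(C,\omega_C) = \bigoplus_{i=1}^{s}(\psi_1\otimes\rho_i)$ with $s = \tfrac{q-1}{2}$, of total dimension $2\cdot\tfrac{q-1}{2} = q - 1 = g$, which confirms the dimension count and shows no characters are missed. Then $H^1(C,\mathbb Q)\otimes_{\mathbb Q}\mathbb C = H^0\oplus\overline{H^0} = \bigoplus_i(\psi_1\otimes\rho_i)\oplus\overline{(\psi_1\otimes\rho_i)}$, and since each $\psi_1\otimes\chi_i$ is symplectic it is self-conjugate up to the $\mathbb Q$-Galois action, so by Proposition~\ref{prop:2.3} the sum of all $\psi_1\otimes\chi_i$ together with their $\mathbb Q$-conjugates, taken with multiplicity $m_{\mathbb Q}(\psi_1\otimes\chi_i) = 2$ (Proposition~\ref{prop:6.3}), is exactly the character $2(\psi_1\otimes\chi)$ of $\mathcal{W}^-$; matching dimensions ($2g$ on both sides) forces $H^1(C,\mathbb Q)\simeq\mathcal{W}^-$ as $\mathbb Q[G]$-modules, giving (i). For (ii), the only characters of positive multiplicity are the symplectic $\psi_1\otimes\chi_i$, each with $\mu = 1 \in\{0,1\}$, and all orthogonal characters have $\mu = 0$, so all three conditions of Corollary~\ref{corr:3.4} hold and $N = 0$. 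The main obstacle I anticipate is purely bookkeeping: getting the eigenvalues of $\rho_\chi(x_2), \rho_\chi(x_3)$ right for the twisted characters $\psi_1\otimes\rho_i$ — these are what distinguish the $\mu = 1$ case from the $\mu = 0$ case of the untwisted $\chi_i$ — and confirming that the chosen Hurwitz representative $\bm x = (a, b, b^3a^{-1})$ is compatible with the matrix conventions of~\eqref{eq:symplecticrep}; everything else is a direct fractional-part computation.
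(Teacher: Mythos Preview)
Your proposal is correct and follows essentially the same approach as the paper's proof: compute the genus $g = q-1$ by Riemann--Hurwitz, apply the Chevalley--Weil formula to obtain $\mu_{\psi_1\otimes\chi_i}=1$ for each $i$, and then conclude (i) via the Hodge decomposition and (ii) via Corollary~\ref{corr:3.4}. The only difference is one of economy: the paper computes $\mu_{\psi_1\otimes\chi_i}=1$ alone and then invokes dimension counting (since $\sum_i 2\cdot 1 = q-1 = g$ already exhausts $H^0(C,\omega_C)$, all remaining multiplicities are forced to vanish), whereas you verify $\mu_{\psi_j}=0$ and $\mu_{\chi_i}=0$ directly --- your extra checks are correct and harmless.
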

\begin{proof}
By the Riemann-Hurwitz formula, the genus of $C$ is $g = q - 1$. Note that the eigenvalues of $(\psi_1 \otimes \rho_i)(b)$ are $\zeta_4, \zeta_4^3$. The same is true for $b^3a^{-1}$. Then, the Chevalley-Weil formula yields for all $i$
\begin{equation}
\label{eq:5.13}
    \mu_{\psi_1 \otimes \chi_i} = -2 + \left \langle - \frac{i}{q} \right \rangle + \left \langle - \frac{-i}{q} \right \rangle + 2 \left( \left \langle - \frac{1}{4} \right \rangle + \left \langle - \frac{3}{4} \right \rangle \right) = 1.
\end{equation}
By dimension counting, the isomorphism class of $H^0(C, \omega_C)$ is completely determined by~\eqref{eq:5.13}. Since the character of $H^0(C, \omega_C)$ is $\psi_1 \otimes \chi$, and that $H^0(C, \omega_C)$ and $\overline{H^0(C, \omega_C)}$ are (non-canonically) isomorphic representations, we conclude the character of $H^1(C, \mathbb{Q})$, as a $\mathbb{Q}$-representation, is $m_{\mathbb{Q}}(\psi_1 \otimes \chi_1)(\psi_1 \otimes \chi) = 2(\psi_1 \otimes \chi)$, yielding (i). Result (ii) follows by Corollary~\ref{corr:3.4}.
\end{proof}
Since $H^1(C, \mathbb{Q})$ is a simple $\Delta_2(F_0)$-module, Theorem~\ref{thm:isotypicdec} implies that $\text{Jac}(C)$ has endomorphism by $\Delta_2(F_0)$. In this situation, Proposition~\ref{prop:jacobiancm} fails to apply, because there are Schur indices of $G$ greater than $1$. However, notice that $\text{Jac}(C) \sim e \text{Jac}(C)$, where $e := e_{\mathbb{Q}}(\psi_1 \otimes \chi_1) = e_{\mathbb{Q}(\zeta_4)}(\psi_1 \otimes \chi_1)$ is the primitive central idempotent associated to the symplectic character $\psi_1 \otimes \chi$ (over both $\mathbb{Q}$ and its extension $\mathbb{Q}(\zeta_4)$). Therefore, Theorem~\ref{prop:jacobiancmsymplectic} implies that $\text{Jac}(C)$ has endomorphism by $M_2(F_0(\zeta_4))$, induced by the $\mathbb{Q}(\zeta_4)[G]$-action. In particular, $\text{Jac}(C)$ has CM by $F_0(\zeta_4, \zeta_q) = \mathbb{Q}(\zeta_{4q}) = \mathbb{Q}(\zeta_4, \zeta_{2q})$ (note $F_0(\zeta_4, \zeta_q)$ is a maximal subfield of $M_2(F_0(\zeta_4))$). Moreover, Theorem~\ref{thm:isotypicdec} also implies the isogeny $\text{Jac}(C) \sim B^2$, where $B$ is an abelian subvariety of $\text{Jac}(C)$ with CM by $F_0(\zeta_4)$.

To find the CM type, we apply a similar computation as in Section~\ref{sec:faithfulmetacyclic}. That is, notice
\begin{equation}
\begin{aligned}
    e &= \frac{1}{\# G}\left(2(q - 1) - 2(q - 1) b^2 - 2\sum_{\nu = 1}^{s - 1} a^\nu + 2 \sum_{\nu \in (\mathbb{Z}/2q\mathbb{Z})^{\ast}} (b^2 a)^\nu \right) \\
    &= \frac{q - 1}{2q} - \frac{q - 1}{2q} b^2 + \frac{1}{2q} \sum_{\nu = 1}^{2s - 1} (-1)^\nu (b^2 a)^\nu,
\end{aligned}
\end{equation}
and this is exactly the primitive central idempotent associated to $\mathbb{Q}(\zeta_4, \zeta_{2q})$ in the group algebra decomposition of $\mathbb{Q}(\zeta_4)[b^2 a] \simeq \mathbb{Q}(\zeta_4)[\mathbb{Z}/2q\mathbb{Z}]$. It follows \begin{equation}
    \mathbb{Q}(\zeta_4, \zeta_{2q}) \simeq \mathbb{Q}(\zeta_4)[b^2 a] e \subseteq \mathbb{Q}(\zeta_4)[G] e \simeq M_2(F_0(\zeta_q)),
\end{equation}
where the first isomorphism may be specified by the embedding
\begin{equation}
    \mathfrak{i} : \mathbb{Q}(\zeta_4, \zeta_{2q}) \hookrightarrow \text{End}^0(\text{Jac}(C)), \quad \zeta_{2q} \mapsto b^2 a e
\end{equation}
associated to the abelian variety $\text{Jac}(C)$. Hence, the pre-image of $\zeta_4 b^2 ae$ is a primitive root of order $4q$, i.e. a generator of $\mathbb{Q}(\zeta_4, \zeta_{2q})$.

Since the eigenvalues of $(\psi_1 \otimes \rho_i)(b^2 a)$ are $-\zeta_q^i, -\zeta_q^{-i}$, there exists a basis of $H^0(C, \omega_C)$ such that the $b^2 ae$-action is the diagonal matrix given by $\text{diag}\{-\zeta_q, -\zeta_q^2, \dots, -\zeta_q^{q - 1}\}$. Now, by the above, $H^0(C, \omega_C)$ affords the scalar multiplication by $\zeta_4$, so we deduce that the $\zeta_4 b^2 ae$-action on $H^0(C, \omega_C)$ is the matrix
\begin{equation}
    \text{diag} \left \{ \zeta_4 \cdot \zeta_q, \zeta_4 \cdot \zeta_q^2, \dots, \zeta_4 \cdot \zeta_q^{q - 1}  \right \}
\end{equation}
or its complex conjugate. We summarize the main result in the following Theorem:
\begin{theorem}[Theorem~\ref{thm:1.3}]
\label{thm:cmtypedicyclic}
Let $C \rightarrow \mathbb{P}^1$ be the $G$-Galois cover with local monodromy $\bm{m} = (q, 4, 4)$ as defined above. Then, the Jacobian $(\text{Jac}(C), \mathfrak{i})$ (with $\mathfrak{i}$ given above) has CM by $(\mathbb{Q}(\zeta_{4q}), \Phi)$, where the type $\Phi$ is given by the following $g$ embeddings
\begin{equation}
    \sigma : \zeta_{4q} \mapsto \zeta_4 \cdot \zeta_q^j, \quad 1 \le j < q
\end{equation}
of $\mathbb{Q}(\zeta_{4q})$ into $\mathbb{C}$.
\end{theorem}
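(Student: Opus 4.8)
The plan is to consolidate the analysis already carried out in the discussion preceding the statement, so the work is mostly organizational, with one genuine verification at the end. I would begin by recalling from Proposition~\ref{prop:6.4} that $H^1(C,\mathbb{Q}) \cong \mathcal{W}^-$, the simple $\Delta_2(F_0)$-module, with multiplicity one; hence $\text{Jac}(C)$ is a single isotypic component and, by Theorem~\ref{thm:isotypicdec}, $\Delta_2(F_0) \hookrightarrow \text{End}^0(\text{Jac}(C))$. Since $\psi_1 \otimes \chi_1$ is symplectic (Proposition~\ref{prop:6.3}) and $m_{\mathbb{Q}(\zeta_4)}(\psi_1 \otimes \chi_1) = 1$, Theorem~\ref{prop:jacobiancmsymplectic} applies with $K = \mathbb{Q}(\zeta_4)$: the natural $\mathbb{Q}[G]$-action extends to a $\mathbb{Q}(\zeta_4)[G]$-action, $\text{End}^0(\text{Jac}(C))$ acquires a copy of $M_2(F_0(\zeta_4))$, and since $\mu_{\psi_1 \otimes \chi_1} = 1$ the equality case holds. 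Taking $F_0(\zeta_4,\zeta_q) = \mathbb{Q}(\zeta_{4q})$ as a maximal subfield of $M_2(F_0(\zeta_4))$---it is a CM-field of degree $2(q-1) = 2g$ over $\mathbb{Q}$---this yields CM by $\mathbb{Q}(\zeta_{4q})$ and, together with the idempotent computation, pins down the embedding $\mathfrak{i}$.

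It then remains to read off the CM-type from the analytic representation, i.e.\ from the action of $\mathbb{Q}(\zeta_{4q})$ on $H^0(C,\omega_C)$ as described in Section~\ref{sec:2.4}. Here I would use the explicit idempotent $e = e_{\mathbb{Q}}(\psi_1 \otimes \chi_1) = e_{\mathbb{Q}(\zeta_4)}(\psi_1 \otimes \chi_1)$ computed above, which is the idempotent cutting out $\mathbb{Q}(\zeta_4,\zeta_{2q})$ inside $\mathbb{Q}(\zeta_4)[b^2 a] \cong \mathbb{Q}(\zeta_4)[\mathbb{Z}/2q\mathbb{Z}]$; thus $\mathfrak{i}(\zeta_{2q}) = b^2 a e$, $\mathfrak{i}(\zeta_4)$ is the scalar $\zeta_4 e$, and a primitive $4q$-th root of unity maps to $\zeta_4 b^2 a e$. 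From the matrices in~\eqref{eq:symplecticrep} one has $(\psi_1 \otimes \rho_i)(b^2) = -I$ and $(\psi_1 \otimes \rho_i)(a)$ with eigenvalues $\zeta_q^{\pm i}$, so $(\psi_1 \otimes \rho_i)(b^2 a)$ has eigenvalues $-\zeta_q^{\pm i}$; letting $i$ range over $1 \le i \le \tfrac{q-1}{2}$ and invoking Proposition~\ref{prop:6.4}, there is a basis of $H^0(C,\omega_C)$ on which $b^2 a e$ acts by $\mathrm{diag}\{-\zeta_q^{1},\dots,-\zeta_q^{q-1}\}$. Combining with the scalar action of $\zeta_4 e$ on $H^0(C,\omega_C)$, the generator $\zeta_4 b^2 a e$ acts diagonally, and reading off its eigenvalues against the convention of Section~\ref{sec:2.4}---choosing the appropriate orientation of $H^1(C,\mathbb{C}) = H^0(C,\omega_C) \oplus \overline{H^0(C,\omega_C)}$---identifies $\Phi$ with $\{\zeta_{4q} \mapsto \zeta_4\zeta_q^{j} : 1 \le j < q\}$.

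The genuine point to verify---and the only place a slip could hide---is that these $g = q-1$ embeddings actually form a CM-type, i.e.\ that no two are complex conjugate. Writing $\zeta_4\zeta_q^{j} = \zeta_{4q}^{q+4j}$, complex conjugation sends it to $\zeta_{4q}^{3q-4j}$, and an equality $q + 4j \equiv 3q - 4j' \pmod{4q}$ would force $2(j+j') \equiv -q \pmod{2q}$, impossible since the left side is even and $q$ is odd. Hence $\Phi$ is a legitimate CM-type and the theorem follows. I do not anticipate any real obstacle: the substantive inputs---the Chevalley--Weil computation, the fact that the symplectic characters $\psi_1 \otimes \chi_i$ have Schur index $2$, and Theorem~\ref{prop:jacobiancmsymplectic}---are already in place, and what remains is careful bookkeeping of the idempotents, of conjugation, and of the $H^0$ versus $\overline{H^0}$ ambiguity.
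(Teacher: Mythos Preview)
Your proposal is correct and follows essentially the same approach as the paper: the paper's ``proof'' of Theorem~\ref{thm:cmtypedicyclic} is precisely the discussion preceding it---invoking Proposition~\ref{prop:6.4}, Theorem~\ref{prop:jacobiancmsymplectic} with $K=\mathbb{Q}(\zeta_4)$, the explicit idempotent $e$, and the eigenvalue computation for $(\psi_1\otimes\rho_i)(b^2a)$---and you have reproduced exactly this chain of reasoning. Your final paragraph verifying that $\Phi$ is a genuine CM-type (no two embeddings complex conjugate) is a useful sanity check that the paper leaves implicit.
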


\subsubsection*{Local monodromy $\bm{m} = (2q, 4, 4)$}

Let $G := \text{Dic}_q$ as before. Throughout this section, we fix a $G$-Galois cover $C \rightarrow \mathbb{P}^1$ of genus $g$ with monodromy datum $(G, \bm{x})$ such that $\bm{x} = (ab^2 , b, b a^{-1}), \bm{m} = (2q, 4, 4)$, corresponding to the second Hurwitz equivalence class identified by Proposition~\ref{prop:monodromydicyclic}. The following theorem identifies that $H^1(C, \mathbb{Q})$ has both a complex-valued component and a symplectic component.

\begin{proposition}
\label{prop:6.9}
Let $C \rightarrow \mathbb{P}^1$ be the $G$-Galois cover with local monodromy $\bm{m} = (2q, 4, 4)$ as defined above. Then, $H^0(C, \omega_C)$ may be computed with the Chevalley-Weil formula. As a consequence,
\begin{enumerate}[nolistsep, label={(\roman{enumi})}]
    \item The $\mathbb{Q}[G]$-module $H^1(C, \mathbb{Q})$ is isomorphic to $\mathcal{W}^- \oplus \mathcal{U}$, where $\mathcal{U}$ is the rational irreducible representation of $G$ with character $\psi_1 + \psi_3$.
    \item The cover $C \rightarrow \mathbb{P}^1$ gives rise to a special point with $N = 0$.
\end{enumerate}
\end{proposition}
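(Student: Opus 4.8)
The strategy is identical to that of Proposition~\ref{prop:6.4}: determine the genus via Riemann--Hurwitz, compute every multiplicity $\mu_\chi$ ($\chi \in \mathrm{Irr}(G)$) from the Chevalley--Weil formula (Theorem~\ref{thm:cw}), reconstruct $H^0(C,\omega_C)$ and hence the $\mathbb{Q}[G]$-module $H^1(C,\mathbb{Q})$ by a dimension count together with the Hodge decomposition~\eqref{eq:hodgedecomposition}, and read off $N=0$ from Corollary~\ref{corr:3.4}. By~\eqref{eq:RiemannHurwitz} with $\#G = 4q$ and $\bm{m} = (2q,4,4)$, the genus is $g = 1 - 4q + \frac12(2(2q-1) + 3q + 3q) = q$.

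The core of the argument is the eigenvalue computation on the SSG $\bm{x} = (ab^2, b, ba^{-1})$: here $x_1 = ab^2$ has order $2q$ (as $a$ and $b^2$ commute), while $x_2 = b$ and $x_3 = ba^{-1}$ have order $4$ (one checks $(ba^{-1})^2 = b^2$). For a linear character $\psi_j$ one has $\psi_j(ab^2) = \psi_j(b)^2$ and $\psi_j(ba^{-1}) = \psi_j(b)$, since $a \in G' = \langle a\rangle$; in particular $\psi_1(ab^2) = -1$ and $\psi_1(b) = \psi_1(ba^{-1}) = \zeta_4$. For the degree-$2$ characters $\chi_i$, the representation $\rho_i$ is inflated from $G/Z(G)\simeq D_q$, so $\rho_i(b^2) = \mathrm{Id}$; consequently $\rho_i(ab^2) = \rho_i(a)$ has eigenvalues $\zeta_q^{\pm i}$, whereas $\rho_i(b)$ and $\rho_i(ba^{-1})$ square to the identity and have determinant $-1$, hence eigenvalues $\{1,-1\}$. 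For the symplectic characters $\psi_1 \otimes \chi_i$ one multiplies through by $\psi_1$: the eigenvalues on $x_1$ become the primitive $2q$-th roots $-\zeta_q^{\pm i} = \zeta_{2q}^{q\pm 2i}$, and on $x_2,x_3$ they become $\zeta_4^{\pm 1}$. Feeding these into Theorem~\ref{thm:cw} and evaluating the fractional-part sums, I expect to obtain $\mu_{\psi_0} = \mu_{\psi_2} = 0$, $\mu_{\chi_i} = 0$ and $\mu_{\psi_1\otimes\chi_i} = 1$ for all $i$, and --- crucially --- the asymmetric values $\mu_{\psi_1} = 1$, $\mu_{\psi_3} = 0$. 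A dimension check $\dim H^0(C,\omega_C) = 1 + 2\cdot\frac{q-1}{2} = q = g$ then confirms the list is complete, so $H^0(C,\omega_C) \simeq \psi_1 \oplus \bigoplus_{i=1}^{s}(\psi_1\otimes\chi_i)$ as $\mathbb{C}[G]$-modules, with $\overline{H^0(C,\omega_C)} \simeq \psi_3 \oplus \bigoplus_{i=1}^{s}(\psi_1\otimes\chi_i)$. Since $m_{\mathbb{Q}}(\psi_1\otimes\chi_i) = 2$ (Proposition~\ref{prop:6.3}) while $\psi_1$ is a linear character with $\mathbb{Q}(\psi_1) = \mathbb{Q}(\zeta_4)$, $m_{\mathbb{Q}}(\psi_1) = 1$ and $\psi_3 = \overline{\psi_1}$, Proposition~\ref{prop:2.3} identifies the $\mathbb{Q}[G]$-module $H^1(C,\mathbb{Q})$ as the one with character $(\psi_1 + \psi_3) + 2(\psi_1\otimes\chi)$, namely $\mathcal{U} \oplus \mathcal{W}^-$. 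This is (i).

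Part (ii) is then immediate from Corollary~\ref{corr:3.4} (equivalently Proposition~\ref{prop:Nformula}): the orthogonal characters $\psi_0,\psi_2$ and $\chi_i$ have $\mu = 0$; the complex-valued pair $\psi_1,\psi_3$ satisfies $\mu_{\psi_3} = 0$; and each symplectic character $\psi_1\otimes\chi_i$ has $\mu = 1 \in \{0,1\}$. Hence all three conditions of Corollary~\ref{corr:3.4} hold, so $N = 0$.

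The main obstacle is purely the bookkeeping for the order-$2q$ element $x_1 = ab^2$: each eigenvalue must be written as $\zeta_{2q}^{\alpha}$ with $0 \le \alpha < 2q$ (so $-\zeta_q^i \mapsto \zeta_{2q}^{q+2i}$ and $-\zeta_q^{-i} \mapsto \zeta_{2q}^{q-2i}$, both odd and coprime to $2q$ for $1\le i\le\frac{q-1}{2}$), and one must evaluate $\langle -(q\pm 2i)/(2q)\rangle = \frac12 \mp \frac{i}{q}$; the point is that this pair of contributions sums to $1$, just as the analogous pair did in~\eqref{eq:5.13}. I would also flag the conceptual point that the split $\mu_{\psi_1} = 1$, $\mu_{\psi_3} = 0$ --- rather than $\mu_{\psi_1} = \mu_{\psi_3}$ --- is exactly what keeps the complex-valued component from violating condition (ii) of Corollary~\ref{corr:3.4}; it reflects the specific choice of Hurwitz class $\bm{x} = (ab^2, b, ba^{-1})$ fixed via Proposition~\ref{prop:monodromydicyclic}, and is worth recording explicitly.
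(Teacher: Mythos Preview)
Your proposal is correct and follows exactly the strategy of the paper's proof: Riemann--Hurwitz for the genus, Chevalley--Weil for each multiplicity (with the key observation that the eigenvalues of $(\psi_1\otimes\rho_i)(ab^2)$ are the primitive $2q$-th roots $-\zeta_q^{\pm i}$), a dimension count to pin down $H^0(C,\omega_C)$, and Corollary~\ref{corr:3.4} for $N=0$. Your write-up is in fact more explicit than the paper's (which largely defers to Proposition~\ref{prop:6.4}), especially the careful $\zeta_{2q}^{q\pm 2i}$ bookkeeping and the verification that $\mu_{\psi_1}=1$, $\mu_{\psi_3}=0$; nothing is missing.
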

\begin{proof}
The proof is extremely similar to that of Proposition~\ref{prop:6.4}. By the Riemann-Hurwitz formula, the genus of $C$ is $g = q$. Notice, in particular, that the eigenvalues of $(\psi_1 \otimes \rho_i)(ab^2)$ are $-\zeta_q^i, -\zeta_q^{-i}$, a pair of complex conjugate primitive $2q$-th roots of unity. The Chevalley-Weil formula thus yields for all $i$ that $\mu_{\psi_1 \otimes \rho_i} = 1$. 

By the Riemann-Hurwitz formula (see~\eqref{eq:4.7}), the genus $g$ of $C$ is $q - 1$. Note that the eigenvalues of $(\psi_1 \otimes \rho_i)(b)$ are $\zeta_4, \zeta_4^3$. The same is true for $b^3a^{-1}$. Then, the Chevalley-Weil formula yields for all $i$
\begin{equation}
\label{eq:5.132}
    \mu_{\psi_1 \otimes \chi_i} = -2 + \left \langle - \frac{i}{q} \right \rangle + \left \langle - \frac{-i}{q} \right \rangle + 2 \left( \left \langle - \frac{1}{4} \right \rangle + \left \langle - \frac{3}{4} \right \rangle \right) = 1.
\end{equation}
The Chevalley-Weil formula also shows $\mu_{\psi_1} = 1, \mu_{\psi_3} = 0$. Hence, dimension counting implies that $H^0(C, \omega_C)$ is completely determined by~\eqref{eq:5.132} and $\mu_{\psi_1}, \mu_{\psi_3}$, Thus, the character of $H^1(C, \mathbb{Q})$, as a $\mathbb{Q}$-representation, is $m_{\mathbb{Q}}(\psi_1 \otimes \chi_1)(\psi_1 \otimes \chi) + (\psi_1 + \psi_3)$, yielding (i). Result (ii) follows by Corollary~\ref{corr:3.4}.
\end{proof}

Introduce the rational primitive central idempotents $e_0 := e_{\mathbb{Q}}(\psi_1 + \psi_3), e := e_{\mathbb{Q}}(\psi_1 \otimes \chi_1)$. By Proposition~\ref{prop:jacobiancm}, the isotypic component $e_0 \text{Jac}(C)$ has complex multiplication by $\mathbb{Q}(\zeta_4)$; the CM-type is primitive and automatically determined. As for $e \text{Jac}(C)$, we may repeat the argument of the $\bm{m} = (q, 4, 4)$ case without further loss. Note that the underlying complex vector space of holomorphic $1$-forms of $e \text{Jac}(C)$ is just $e H^0(C, \omega_C)$, a proper subspace of $H^0(C, \omega_C)$.
\begin{theorem}
\label{prop:symplecticcmpart2}
Let $C \rightarrow \mathbb{P}^1$ be the $G$-Galois cover with local monodromy $\bm{m} = (2q, 4, 4)$. Let $e$ be the rational primitive central idempotent defined above. Then, the isotypic part $e \text{Jac}(C)$ has endomorphism by $M_2(F_0(\zeta_4))$ induced by the $\mathbb{Q}(\zeta_4)[G]$-action. Therefore, $e \text{Jac}(C)$ affords the CM-pair $(\mathbb{Q}(\zeta_{4q}), \Phi)$, with $\Phi$ given by the following $g - 1$ embeddings
\begin{equation}
    \sigma : \zeta_{4q} \mapsto \zeta_4 \cdot \zeta_q^j, \quad 1 \le j < q.
\end{equation}
\end{theorem}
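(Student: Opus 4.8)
The strategy is to transplant the argument of Proposition~\ref{prop:6.4} and Theorem~\ref{thm:cmtypedicyclic} from the whole Jacobian to the symplectic isotypic component $e \text{Jac}(C)$, where $e = e_{\mathbb{Q}}(\psi_1 \otimes \chi_1) = e_{\mathbb{Q}(\zeta_4)}(\psi_1 \otimes \chi_1)$. The input is Proposition~\ref{prop:6.9}: since $H^1(C,\mathbb{Q}) \simeq \mathcal{W}^- \oplus \mathcal{U}$, we get $e H^1(C,\mathbb{Q}) \simeq \mathcal{W}^-$, so $e \text{Jac}(C)$ is precisely the isotypic component of $\text{Jac}(C)$ attached to the symplectic Wedderburn component $\Delta_2(F_0)$, occurring with multiplicity one; equivalently $\mu_{\psi_1 \otimes \chi_i} = 1$ for every $i$. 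Counting dimensions, $\dim_{\mathbb{Q}} \mathcal{W}^- = 2(q-1)$, hence $\dim(e \text{Jac}(C)) = q-1 = g-1$, and the space of holomorphic $1$-forms of $e \text{Jac}(C)$ is the proper subspace $e H^0(C,\omega_C) \simeq \bigoplus_{i=1}^{(q-1)/2}(\psi_1 \otimes \rho_i) \subsetneq H^0(C,\omega_C)$.

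Next I apply Theorem~\ref{prop:jacobiancmsymplectic} to $R_i = \Delta_2(F_0)$ (so $n_i = 1$, $\mathbb{Q}(\chi) = F_0$) with the imaginary quadratic field $K = \mathbb{Q}(\zeta_4)$. Its hypothesis $m_K(\psi_1 \otimes \chi_i) = 1$ is the Schur-index fact recorded in the representation-theory discussion preceding Proposition~\ref{prop:6.3}, namely that $\psi_1 \otimes \rho_i$ is realized over $F_0(\zeta_4) = K(\chi)$. Part (i) then gives that the natural $\mathbb{Q}[G]$-action on $e \text{Jac}(C)$ extends to a $\mathbb{Q}(\zeta_4)[G]$-action and that $\text{End}^0(e \text{Jac}(C))$ contains $\Delta_2(F_0) \otimes_{F_0} \mathbb{Q}(\zeta_4) \simeq M_2(F_0(\zeta_4))$. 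Since $\mu_{\psi_1 \otimes \chi_i} = 1$, the equality case of parts (ii)--(iii) holds, so $e \text{Jac}(C)$ has CM by a maximal subfield of $M_2(F_0(\zeta_4))$; as $\mathbb{Q}(\zeta_{4q}) = F_0(\zeta_4,\zeta_q)$ is a cyclotomic (hence CM) field of degree $\phi(4q) = 2(q-1) = 2\dim(e \text{Jac}(C))$ lying between the center $F_0(\zeta_4)$ and $M_2(F_0(\zeta_4))$, it is such a maximal subfield, and the CM-field is $\mathbb{Q}(\zeta_{4q})$.

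Finally I compute the CM-type with the primitive-central-idempotent method of Section~\ref{sec:faithfulmetacyclic}. Restricting $e$ to $\mathbb{Q}(\zeta_4)[b^2 a] \simeq \mathbb{Q}(\zeta_4)[\mathbb{Z}/2q\mathbb{Z}]$ recovers, exactly as in the $\bm{m}=(q,4,4)$ case, the primitive central idempotent cutting out $\mathbb{Q}(\zeta_4,\zeta_{2q})$; this yields the embedding $\mathfrak{i} : \mathbb{Q}(\zeta_4,\zeta_{2q}) \hookrightarrow \text{End}^0(e \text{Jac}(C))$ with $\zeta_{2q} \mapsto b^2 a e$, so that $\zeta_4 b^2 a e$ is the image of a primitive $4q$-th root of unity. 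Using $\rho_i(b^2) = I$, the eigenvalues of $(\psi_1 \otimes \rho_i)(b^2 a)$ are $-\zeta_q^{\pm i}$, so $b^2 a e$ acts on $e H^0(C,\omega_C)$ as $\text{diag}\{-\zeta_q^j : 1 \le j < q\}$ after reordering; composing with the scalar action of $\zeta_4$ produces the CM-type $\sigma : \zeta_{4q} \mapsto \zeta_4 \zeta_q^j$, $1 \le j < q$ (or its complex conjugate), exactly the $\Phi$ of Theorem~\ref{thm:cmtypedicyclic}.

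The computations themselves are routine and duplicate earlier work, so the only care needed is conceptual rather than computational: one must apply Theorem~\ref{prop:jacobiancmsymplectic} to the isotypic piece $e \text{Jac}(C)$ rather than to $\text{Jac}(C)$ (which here also carries the complex-valued piece $e_0 \text{Jac}(C)$ handled separately by Proposition~\ref{prop:jacobiancm}), and one must remember that the complex structure governing the CM-type lives on the proper subspace $e H^0(C,\omega_C)$, not on all of $H^0(C,\omega_C)$. Granting these two bookkeeping points, everything reduces verbatim to the $\bm{m}=(q,4,4)$ analysis.
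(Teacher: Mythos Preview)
Your proposal is correct and follows exactly the approach the paper indicates: the paper itself provides no separate proof for this theorem, merely stating that one may ``repeat the argument of the $\bm{m} = (q,4,4)$ case without further loss'' after noting that the relevant space of holomorphic $1$-forms is the proper subspace $eH^0(C,\omega_C)$. You have faithfully and correctly carried out that repetition, including the application of Theorem~\ref{prop:jacobiancmsymplectic} to the isotypic piece and the CM-type computation via the idempotent $e$ restricted to $\mathbb{Q}(\zeta_4)[b^2a]$.
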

Thus, one recovers the CM Jacobian arising from the local monodromy $\bm{m} = (q, 4, 4)$ of the previous subsection.

\subsection{The Quaternion Group}
\label{sec:quaterniongroup}

Throughout this section let $G := Q_8$, using the presentation in~\eqref{eq:quaterniongroup}. Also, denote $\mathbf{k} := \mathbf{i}\mathbf{j}$. We shall see that the computation of the CM-field and type of $(G, \bm{x})$ data is similar, but slightly more sophisticated (and interesting) than the case of $G = \text{Dic}_q$ for $q$ an odd prime. We begin by noting that there is exactly one Hurwitz equivalence class of $G$-Galois covers of $\mathbb{P}^1$ with $3$ branch points:
\begin{proposition}
Let $C \rightarrow \mathbb{P}^1$ be a $G$-Galois cover branched at $3$ points. Then, this cover is Hurwitz equivalent to the datum $(G, \bm{x}), \ \bm{x} = (\mathbf{i}, \mathbf{j}, \mathbf{k})$.
\end{proposition}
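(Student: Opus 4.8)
The plan is to classify all spherical systems of $3$ generators of $G = Q_8$ up to the action of $\text{Aut}(Q_8) \times \mathbf{B}_3$, and to show there is exactly one orbit; in fact the $\text{Aut}(Q_8)$-action alone will turn out to suffice, so braid moves are available but not needed.

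First I would pin down the local monodromy. The only nonidentity element of $Q_8$ whose order is not $4$ is $-1 = \mathbf{i}^2$, which is central. If some $x_i$ equalled $-1$, then in the quotient $\bar G := Q_8/Z(Q_8) \simeq (\mathbb{Z}/2\mathbb{Z})^2$ the relation $\bar x_1 \bar x_2 \bar x_3 = 1$ would force $\bar x_j = \bar x_k$ for the other two indices; but two elements of $Q_8$ lying in a common coset of $Z(Q_8)$ generate a cyclic group of order $4$, contradicting $\langle x_1, x_2, x_3 \rangle = Q_8$. Hence every $x_i$ has order $4$, and each $\bar x_i$ is one of the three nonzero elements of $(\mathbb{Z}/2\mathbb{Z})^2$. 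Since three nonzero elements of $(\mathbb{Z}/2\mathbb{Z})^2$ summing to $0$ must be pairwise distinct, the subgroups $\langle x_1 \rangle, \langle x_2 \rangle, \langle x_3 \rangle$ are precisely the three cyclic subgroups of order $4$ of $Q_8$, in some order. In particular any two of the $x_i$ already generate $Q_8$ (their images span $\bar G$ and $x_i^2 = -1$), so $\bm{m} = (4,4,4)$ and the generation condition is automatic.

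Next I would normalize using automorphisms. Recall $\text{Aut}(Q_8) \simeq S_4$, with the inner automorphisms forming the normal Klein four-subgroup, and the induced action on the three order-$4$ cyclic subgroups $\langle \mathbf{i} \rangle, \langle \mathbf{j} \rangle, \langle \mathbf{k} \rangle$ being the quotient map $S_4 \twoheadrightarrow S_3$. Thus, after applying a suitable automorphism, I may assume $\langle x_1 \rangle = \langle \mathbf{i} \rangle$, $\langle x_2 \rangle = \langle \mathbf{j} \rangle$, $\langle x_3 \rangle = \langle \mathbf{k} \rangle$, that is $x_i = \varepsilon_i g_i$ with $(g_1,g_2,g_3) = (\mathbf{i},\mathbf{j},\mathbf{k})$ and $\varepsilon_i \in \{\pm 1\}$. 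The condition $x_1 x_2 x_3 = 1$ then forces a definite parity on the sign vector $(\varepsilon_1,\varepsilon_2,\varepsilon_3)$. Finally, conjugation by $\mathbf{i}$ is an inner automorphism fixing $\mathbf{i}$ and negating both $\mathbf{j}$ and $\mathbf{k}$, and likewise for $\mathbf{j}$ and $\mathbf{k}$; these realize exactly the sign changes flipping an even number of the $\varepsilon_i$, and the admissible sign vectors form a single orbit under them. This reduces $\bm{x}$ to the stated triple $(\mathbf{i}, \mathbf{j}, \mathbf{k})$ (read with the sign convention making the product trivial), and since no freedom remains after this normalization, the Hurwitz equivalence class is unique.

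The three ingredients — the order computation, the description of $\text{Aut}(Q_8) \simeq S_4$ with its action on the three maximal cyclic subgroups, and the inner-automorphism sign bookkeeping — are all standard, so I do not expect a genuine obstacle; the one point requiring care is threading the product-one condition through the chosen presentation of $Q_8$ when matching the normalized triple to the literal $(\mathbf{i},\mathbf{j},\mathbf{k})$, i.e. checking that exactly one (not zero or two) of the $\varepsilon_i$ must be $-1$ and that all such patterns are inner-conjugate.
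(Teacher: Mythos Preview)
Your argument is correct and fills in exactly what the paper leaves implicit: the paper's proof consists of the single observation that no $x_i$ can equal $\pm 1$, after which it declares the result ``easy to see,'' whereas you carry out the full normalization via $\text{Aut}(Q_8)$. Two small remarks. First, with the paper's convention $\mathbf{k} = \mathbf{ij}$ one computes $\mathbf{i}\mathbf{j}\mathbf{k} = -1$, so the literal triple $(\mathbf{i},\mathbf{j},\mathbf{k})$ is not an SSG; you already flag this, and the fix is purely cosmetic (replace one entry by its negative). Second, the product-one condition forces $\varepsilon_1\varepsilon_2\varepsilon_3 = -1$, i.e.\ an \emph{odd} number of minus signs (one or three, giving four admissible vectors), not ``exactly one'' as you write at the end; your inner-automorphism count nonetheless shows these four vectors form a single orbit, so the conclusion stands.
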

\begin{proof}
This is easy to see, because any $x_i$ in a $3$-SSG $\bm{x}$ of $G \simeq Q_8$ clearly cannot be $\pm 1$.
\end{proof}
In the remainder of this section we shall fix the above datum, i.e. let $\bm{x} = (\mathbf{i}, \mathbf{j}, \mathbf{k})$ be a $3$-SSG of $G$ associated to the cover $C \rightarrow \mathbb{P}^1$.

As usual we need the representation theory of $G$. The group $G$ has four linear characters $\psi_1, \dots, \psi_4$ lifted from $\mathbb{Z}/2\mathbb{Z} \times \mathbb{Z}/2\mathbb{Z}$. The remaining character $\chi$ is degree $2$, where $\chi(1) = 2$, $\chi(-1) = -2$, and $\chi$ vanishes elsewhere. There exists a natural surjective homomorphism of $\mathbb{Q}$-algebras $\rho : \mathbb{Q}[G] \rightarrow \mathbb{H}_{\mathbb{Q}}$, where
\begin{equation}
    \mathbb{H}_{\mathbb{Q}} = \{a + b \mathbf{i} + c \mathbf{j} + d \mathbf{k}, \ a, b, c, d \in \mathbb{Q} \ | \ \mathbf{i}^2 = \mathbf{j}^2 = \mathbf{k}^2 = -1, \mathbf{ij} = -\mathbf{ji} = \mathbf{k} \}
\end{equation}
are the rational Hamilton quaternions, given by identifying elements of $G$ as generators of $\mathbb{H}_{\mathbb{Q}}$. Moreover, one checks $\rho(e(\chi)) = 1$, so that $\rho(\mathbb{Q}[G]e(\chi)) = \mathbb{H}_{\mathbb{Q}}$. Since $\mathbb{Q}[G]e(\chi)$ is simple, $\rho$ induces an isomorphism $\mathbb{Q}[G]e(\chi) \xrightarrow{\sim} \mathbb{H}_{\mathbb{Q}}$. The rational group algebra decomposition is thus
\begin{equation}
    \mathbb{Q}[G] \simeq \mathbb{Q} \times \mathbb{Q} \times \mathbb{Q} \times \mathbb{Q} \times \mathbb{H}_{\mathbb{Q}}
\end{equation}
and we have $m_{\mathbb{Q}}(\chi) = 2$. In fact, one checks via the Frobenius-Schur indicator~\eqref{eq:frobschur} that $\iota_{\chi} = -1$ and hence $m_{\mathbb{R}}(\chi) = 2$, so that $\chi$ is a symplectic character. Now, the complex irreducible representation associated to $\chi$ is isomorphic to the matrix representation $G \rightarrow GL_2(\mathbb{C})$ given by
\begin{equation}
\label{eq:matrepq8}
    \mathbf{i} \mapsto
    \begin{bmatrix}
    0 & -1 \\
    1 & 0 
    \end{bmatrix}, \quad
    \mathbf{j} \mapsto
    \begin{bmatrix}
    0 & -\sqrt{-1} \\
    -\sqrt{-1} & 0
    \end{bmatrix}, \quad
    \mathbf{k} \mapsto
    \begin{bmatrix}
    \sqrt{-1} & 0 \\
    0 & -\sqrt{-1}
    \end{bmatrix}.
\end{equation}
Thus, $\chi$ has Schur index $1$ over $\mathbb{Q}(\zeta_4)$, so that the group algebra $\mathbb{Q}(\zeta_4)[G]$ is
\begin{equation}
    \mathbb{Q}[G] \otimes_{\mathbb{Q}} \mathbb{Q}(\zeta_4) \simeq  \mathbb{Q}(\zeta_4)[G] \simeq \mathbb{Q}(\zeta_4) \times \mathbb{Q}(\zeta_4) \times \mathbb{Q}(\zeta_4) \times \mathbb{Q}(\zeta_4) \times M_2(\mathbb{Q}(\zeta_4)).
\end{equation}
\begin{proposition}
\label{prop:6.1}
Let $G \rightarrow \mathbb{P}^1$ be a $G$-Galois cover with monodromy datum $(G, \bm{x})$. Then, the character of $H^0(C, \omega_C)$ is $\chi$, and $H^1(C, \mathbb{Q})$ is a rational irreducible representation whose character is $2 \chi$. Thus, the datum $(G, \bm{x})$ gives rise to a special point with $N = 0$.
\end{proposition}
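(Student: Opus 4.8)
The plan is to read off $H^0(C,\omega_C)$ from the Chevalley--Weil formula (Theorem~\ref{thm:cw}) and then deduce $N = 0$ from Corollary~\ref{corr:3.4}. The local monodromy of $(G,\bm x)$ is $\bm m = (4,4,4)$, so the first step is the genus: by~\eqref{eq:RiemannHurwitz} with $\#G = 8$ one gets $g = 1 - 8 + \tfrac12\cdot 3\cdot\tfrac{8}{4}(4-1) = 2$. Hence it suffices to pin down a two-dimensional $\mathbb C[G]$-submodule of $H^0(C,\omega_C)$.

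For the eigenvalue data feeding Chevalley--Weil, I would argue uniformly for the degree-$2$ character $\chi$: each generator $x_i \in \{\mathbf i, \mathbf j, \mathbf k\}$ satisfies $x_i^2 = -1$, so $\rho_\chi(x_i)^2 = -\mathrm{Id}$, and $\chi(x_i) = 0$; therefore the eigenvalues of $\rho_\chi(x_i)$ are exactly $\zeta_4$ and $\zeta_4^3$, giving $E_{i,1} = E_{i,3} = 1$ and $E_{i,0} = E_{i,2} = 0$ for $i = 1,2,3$. Substituting into Theorem~\ref{thm:cw} yields
\[
    \mu_\chi = -2 + 3\left(\left\langle -\tfrac14 \right\rangle + \left\langle -\tfrac34 \right\rangle\right) = -2 + 3 = 1 .
\]
Since $\chi(1) = 2 = g$ and all Chevalley--Weil multiplicities are nonnegative, dimension counting (i.e.\ $\sum_\psi \mu_\psi\,\psi(1) = g$) forces $\mu_\psi = 0$ for every linear character $\psi$, so $H^0(C,\omega_C)$ is the irreducible complex module affording $\chi$. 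To obtain the $\mathbb Q[G]$-module structure of $H^1(C,\mathbb Q)$, I would combine the Hodge decomposition~\eqref{eq:hodgedecomposition} with $\overline{\chi} = \chi$ to see that $H^1(C,\mathbb C)$ affords $2\chi$; then, because $m_{\mathbb Q}(\chi) = 2$ and $\mathbb Q(\chi) = \mathbb Q$, Proposition~\ref{prop:2.3} identifies the rational irreducible representation attached to the Wedderburn component $\mathbb H_{\mathbb Q}$ of $\mathbb Q[G]$ as the one whose complexification has character $2\chi$, so $H^1(C,\mathbb Q)$ is (isomorphic to) a single copy of that rational irreducible representation.

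For the last assertion I would invoke Proposition~\ref{prop:Nformula} directly: $\chi$ is the only character of positive multiplicity and $\iota_\chi = -1$, whence $N = \tfrac12\,\mu_\chi(\mu_{\chi^{\ast}} + \iota_\chi) = \tfrac12(1-1) = 0$; equivalently, $\chi$ is symplectic with $\mu_\chi = 1 \in \{0,1\}$ and conditions (i)--(ii) of Corollary~\ref{corr:3.4} are vacuous, so $\mathsf Z(G,\bm x)$ is special. Everything here is routine; the only mildly delicate point is the bookkeeping that upgrades the complex character $2\chi$ to the statement that $H^1(C,\mathbb Q)$ is exactly \emph{one} copy of the rational irreducible representation --- this is precisely where the Schur index $m_{\mathbb Q}(\chi) = 2$ supplies the coefficient $2$ via Proposition~\ref{prop:2.3} --- and I do not anticipate any genuine obstacle.
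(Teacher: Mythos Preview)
Your proof is correct and follows essentially the same route as the paper: compute $g=2$ via Riemann--Hurwitz, observe that each $\rho_\chi(x_i)$ has eigenvalues $\zeta_4,\zeta_4^3$, apply Chevalley--Weil to get $\mu_\chi=1$, deduce the character of $H^1(C,\mathbb Q)$ from the Hodge decomposition and the Schur index $m_{\mathbb Q}(\chi)=2$, and conclude $N=0$ via Corollary~\ref{corr:3.4}. Your justification of the eigenvalues via $x_i^2=-1$ and the explicit dimension count ruling out the linear characters are slightly more detailed than the paper's version, but the argument is the same.
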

\begin{proof}
The Riemann-Hurwitz formula implies that the genus of the curve $C$ arising from $(G, \bm{x})$ is $g = 2$. Since the eigenvalues of the matrices in~\eqref{eq:matrepq8} are $\zeta_4, \zeta_4^3$, the Chevalley-Weil formula yields
\begin{equation}
    \mu_{\chi} = -2 + 3 \cdot \left( \left \langle -\frac14 \right \rangle + \left \langle -\frac34 \right \rangle \right) = 1,
\end{equation}
implying that the character of $H^0(C, \omega_C)$ is $\chi$. Now, $\chi$ is real-valued, so the associated complex conjugate representation is isomorphic to itself, so the character of $H^1(C, \mathbb{C})$ is $2\chi$. The corresponding rational irreducible representation is irreducible by Schur index considerations. That $(G, \bm{x})$ corresponds to a special point in $\mathsf{A}_g$ is immediate by Corollary~\ref{corr:3.4}.
\end{proof}
Hence, the Jacobian $\text{Jac}(C)$ of the cover $C \rightarrow \mathbb{P}^1$ has complex multiplication, not dissimilar to the previous cases in Section~\ref{sec:5}. In particular, denote by $e$ the primitive central idempotent associated to the symplectic character $\chi$. Then, $\text{Jac}(C) \sim e \text{Jac}(C)$ and by Theorem~\ref{prop:jacobiancmsymplectic}, the Jacobian $\text{Jac}(C)$ has endomorphism by $M_2(\mathbb{Q}(\zeta_4))$ induced by the $\mathbb{Q}(\zeta_4)[G]$-action. It has CM by $\mathbb{Q}(\zeta_4) \times \mathbb{Q}(\zeta_4)$, where each $\mathbb{Q}(\zeta_4)$ component corresponds to a primitive idempotent of $M_2(\mathbb{Q}(\zeta_4))$. Here we have implicitly used the fact that $\text{Jac}(C) \sim B^2$, where $B$ is an abelian subvariety of $\text{Jac}(C)$ with CM by $\mathbb{Q}(\zeta_4)$. In fact, each $B$ is an elliptic curve. Note that the CM of the $Q_8$-Galois cover is in contrast to that of the $\text{Dic}_q$-Galois cover: the latter is simple, while the former contains two simple components.

Now we find the CM type of $\text{Jac}(C)$. Notice
\begin{equation}
    e = \frac12 - \frac12\mathbf{k}^2.
\end{equation}
Let
\begin{equation}
    e_0 = \frac14 - \frac{\zeta_4}{4}\mathbf{k} - \frac14 \mathbf{k}^2 + \frac{\zeta_4}{4}\mathbf{k}, \quad e_1 = \frac14 + \frac{\zeta_4}{4}\mathbf{k} - \frac14 \mathbf{k}^2 - \frac{\zeta_4}{4}\mathbf{k}
\end{equation}
be the distinct primitive central idempotents associated to the faithful characters $\pi_0, \pi_1$ of $\langle \mathbf{k} \rangle \simeq \mathbb{Z}/4\mathbb{Z}$, that is, $\pi_0 : \mathbf{k} \mapsto \zeta_4, \pi_1 : \mathbf{k} \mapsto -\zeta_4$. Then, since $e_0 + e_1 = e$, we have that 
\begin{equation}
    \mathbb{Q}(\zeta_4) \times \mathbb{Q}(\zeta_4) \simeq \mathbb{Q}(\zeta_4)[\mathbf{k}](e_0 + e_1) = \mathbb{Q}(\zeta_4)[\mathbf{k}]e \subseteq \mathbb{Q}(\zeta_4)[G]e \simeq M_2(\mathbb{Q}(\zeta_4)),
\end{equation}
where the first isomorphism may be specified with the embedding
\begin{equation}
    \mathfrak{i} : \mathbb{Q}(\zeta_4) \times \mathbb{Q}(\zeta_4) \hookrightarrow \text{End}^0(\text{Jac}(C)), \quad (\zeta_4, 0) \mapsto \mathbf{k}e_0, \ (0, \zeta_4) \mapsto \mathbf{k}e_1.
\end{equation}
Fix a basis of $H^0(C, \omega_C)$ consistent with the matrix representation~\eqref{eq:matrepq8}. Then, under this matrix representation, we compute
\begin{equation}
    \mathbf{k} e_0 = \begin{bmatrix}
    \zeta_4 & 0 \\ 0 & 0
    \end{bmatrix}, \quad
    \mathbf{k} e_1 = \begin{bmatrix}
    0 & 0 \\ 0 & -\zeta_4
    \end{bmatrix}.
\end{equation}
This encodes the action of each $\mathbb{Q}(\zeta_4)$ component on the associated simple component of $\text{Jac}(C)$. We conclude the main result:
\begin{theorem}
Let $C \rightarrow \mathbb{P}^1$ be a $G$-Galois cover with datum $\bm{x}$ as defined above. Then, $(\text{Jac}(C), \mathfrak{i})$ (with $\mathfrak{i}$ defined above) is CM. The CM-pair of $\text{Jac}(C)$ may be given as a disjoint union 
\begin{equation}
    (\mathbb{Q}(\zeta_4), \Phi_0) \sqcup (\mathbb{Q}(\zeta_4), \Phi_1).
\end{equation}
The two sub-types are the embeddings $\Phi_0 = \{\mathbb{Q}(\zeta_4) \hookrightarrow \mathbb{C}, \zeta_4 \mapsto \zeta_4\}, \ \Phi_1 : \{\mathbb{Q}(\zeta_4) \hookrightarrow \mathbb{C}, \zeta_4 \mapsto -\zeta_4\}$.
\end{theorem}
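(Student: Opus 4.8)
The plan is to assemble the pieces already in place rather than start from scratch. By Proposition~\ref{prop:6.1} and Theorem~\ref{prop:jacobiancmsymplectic} applied to the symplectic idempotent $e$ with $K = \mathbb{Q}(\zeta_4)$, we already know that $\text{Jac}(C) \sim e\,\text{Jac}(C)$ carries a $\mathbb{Q}(\zeta_4)[G]$-action inducing $M_2(\mathbb{Q}(\zeta_4)) \hookrightarrow \text{End}^0(\text{Jac}(C))$ and that $\text{Jac}(C) \sim B^2$ with $B$ an elliptic curve of CM type $\mathbb{Q}(\zeta_4)$. First I would use the orthogonal decomposition $e = e_0 + e_1$ into the primitive idempotents for the faithful characters $\pi_0, \pi_1$ of $\langle \mathbf{k} \rangle$ to split $\text{Jac}(C) \sim B_0 \times B_1$, $B_j := e_j \text{Jac}(C)$, each an elliptic curve, with $\mathfrak{i}$ restricting to an embedding $\mathbb{Q}(\zeta_4) \hookrightarrow \text{End}^0(B_j)$ (the $j$-th factor of $\mathbb{Q}(\zeta_4) \times \mathbb{Q}(\zeta_4)$ acting through $e_j$). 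By the description in Section~\ref{sec:2.4} of the CM-type of a product, it then suffices to compute the CM-type $\Phi_j$ of each $(B_j, \mathfrak{i}|_{B_j})$ separately, and the CM-pair of $\text{Jac}(C)$ is the disjoint union $(\mathbb{Q}(\zeta_4), \Phi_0) \sqcup (\mathbb{Q}(\zeta_4), \Phi_1)$.

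Next I would pin down each $\Phi_j$ exactly as in the proofs of Theorems~\ref{thm:4.6} and~\ref{thm:cmtypedicyclic}: the CM-type is the set of embeddings of $\mathbb{Q}(\zeta_4)$ into $\mathbb{C}$ occurring in the action on the space of holomorphic $1$-forms under the Hodge decomposition~\eqref{eq:hodgedecomposition}. Since $H^0(C, \omega_C)$ has character $\chi$, with the basis consistent with the matrix representation~\eqref{eq:matrepq8} the computation above gives $\mathfrak{i}(\zeta_4, 0) = \mathbf{k}e_0 = \text{diag}(\zeta_4, 0)$ and $\mathfrak{i}(0, \zeta_4) = \mathbf{k}e_1 = \text{diag}(0, -\zeta_4)$. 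Hence $e_0 H^0(C, \omega_C)$ is the first coordinate line, on which the generator $\zeta_4$ of the first factor acts as the scalar $\zeta_4$, so $\Phi_0 = \{\zeta_4 \mapsto \zeta_4\}$; similarly $e_1 H^0(C, \omega_C)$ is the second coordinate line, with the second factor acting by $-\zeta_4$, so $\Phi_1 = \{\zeta_4 \mapsto -\zeta_4\}$. Each $\Phi_j$ is then a single embedding, as it must be for a $1$-dimensional CM abelian variety, and $\Phi_0 \sqcup \Phi_1$ is a CM-type for $\mathbb{Q}(\zeta_4) \times \mathbb{Q}(\zeta_4)$.

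The routine verifications --- $e_0 e_1 = 0$, $e_0 + e_1 = e$, and $B_0, B_1 \neq 0$ (both immediate since $g = 2$ and $\text{Jac}(C) \sim B^2$) --- present no difficulty. The one point demanding care, and the closest thing to an obstacle, is the bookkeeping linking the idempotent decomposition of the $\mathbb{Q}(\zeta_4)[G]$-action to the CM-type: one must check that $\mathbf{k}e_j$ acts on $B_j$ precisely through $\mathfrak{i}$ restricted to the $j$-th factor, i.e. that the isomorphism $\mathbb{Q}(\zeta_4)[\mathbf{k}]e \simeq \mathbb{Q}(\zeta_4) \times \mathbb{Q}(\zeta_4)$ is compatible with the analytic action on $H^0(C, \omega_C)$. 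This is exactly parallel to the argument already carried out for $\text{Dic}_q$ (via Theorem~\ref{prop:jacobiancmsymplectic}), so it introduces nothing new. Finally, since a CM-pair determines its abelian variety up to isogeny by~\cite[Proposition 3.12]{jsmilne}, this description fully characterizes $\text{Jac}(C)$.
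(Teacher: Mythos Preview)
Your proposal is correct and follows essentially the same approach as the paper: the paper's argument is precisely the computation preceding the theorem statement, namely splitting $e = e_0 + e_1$ into the primitive idempotents for the faithful characters of $\langle \mathbf{k} \rangle$, identifying $\mathbb{Q}(\zeta_4)[\mathbf{k}]e \simeq \mathbb{Q}(\zeta_4) \times \mathbb{Q}(\zeta_4)$ via $\mathfrak{i}$, and reading off the CM-type from the matrices $\mathbf{k}e_0 = \mathrm{diag}(\zeta_4, 0)$, $\mathbf{k}e_1 = \mathrm{diag}(0, -\zeta_4)$ in the basis of~\eqref{eq:matrepq8}. Your framing in terms of the isogeny $\text{Jac}(C) \sim B_0 \times B_1$ with $B_j = e_j\,\text{Jac}(C)$ makes the logic a bit more explicit than the paper's presentation, but the substance is identical.
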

%


\section*{Acknowledgements}

The research for this project was conducted in the summer and fall of 2022, during the author’s SURF (Summer Undergraduate Research Fellowship) at Caltech. The author would like to thank the Caltech Student Faculty Programs Office and the Carl F. Braun Residuary Trust for supporting this research financially. The author would also like to thank Elena Mantovan for their invaluable support of this research during the SURF and for providing helpful comments on this paper.

\printbibliography

\end{document}